\newtheorem{lemma}{Lemma}[section]
\newtheorem{theorem}{Theorem}[section]
\newenvironment{remark}{\begin{description} \item[Remark.] }%
{\end{description}}
{\end{description}}
\newcommand{\be}{\begin{equation}}
\newcommand{\ee}{\end{equation}}
\begin{document}
    \title{Error Estimate of MacCormack Rapid Solver Method for 2D Incompressible Navier-Stokes Problems}
   \author{\Large{Eric Ngondiep}
       \thanks{Tel.: +966506048689. E-mail addresses:\ ericngondiep@gmail.com or engondiep@imamu.edu.sa
        (Eric Ngondiep).\ }}
   \date{\small{Department of Mathematics and Statistics, College of Science, Al-Imam Muhammad Ibn Saud Islamic University (IMSIU), 90950 Riyadh, Kingdom of Saudi Arabia}\\
       \text{\,}\\
       \small{Hydrological Research Centre, Institute for Geological and Mining Research, 4110 Yaounde-Cameroon}}

    \maketitle
   \textbf{Abstract.}
    The error estimates and convergence rate of a two-level MacCormack rapid solver method for solving a two-dimensional incompressible Navier-Stokes equations are analyzed. This represents a continuation of the work on the stability analysis of the method. The theoretical result suggests that the rapid solver method is both convergent and second order accurate with respect to time step $\Delta t.$ A wide set of numerical evidences confirm this theoretical analysis.\\
    \text{\,}\\
   \ \noindent {\bf Keywords: Navier-Stokes equations, explicit MacCormack algorithm, Crank-Nicolson scheme, a two-level MCRS method, error estimates, convergence rate.} \\
   \\
   {\bf AMS Subject Classification (MSC). 65M10, 65M05}.

      \section{Introduction and motivation}\label{I}
     Let $\Omega_{f}\subset\mathbb{R}^{d}$ ($d=2$ or $3$), be a fluid flow domain assumed to be bounded, to have a lipschitz-continuous boundary $\Gamma$ and to satisfy a further condition given by $(\ref{15b})$ below. Let $T$ be a positive parameter ($T$ can be equal $\infty$). We consider the $2$D time dependent nonlinear partial differential equations (PDEs) describing the flow of a fluid confined in $\Omega_{f}:$
     \begin{equation}\label{1}
     u_{t}-\nu\triangle u+(u\cdot\nabla)u+ \nabla p=f, \text{\,\,\,\,\,in\,\,\,}\Omega_{f}\times[0,\text{\,}T],
     \end{equation}
     \begin{equation}\label{2}
     \nabla\cdot u = 0, \text{\,\,\,\,\,in\,\,\,}\Omega_{f}\times[0,\text{\,}T],
     \end{equation}
     with the initial condition
     \begin{equation}\label{3}
      u(x,0)=u_{0}(x), \text{\,\,\,\,\,in\,\,\,}\Omega_{f},
     \end{equation}
     and the boundary condition
     \begin{equation}\label{4}
     u=0, \text{\,\,\,\,\,on\,\,\,}\Gamma\times[0,\text{\,}T],
     \end{equation}
    where: $u_{t}=\frac{\partial u}{\partial t},$ $u=(u_{1},u_{2})$ is the velocity, $p$ is the pressure, $f$ represents the density of the body forces, $\nu$ is the viscosity and $u_{0}$ denotes the initial velocity.
    $2$D unsteady incompressible Navier-Stokes problems are a mixed set of elliptic-parabolic equations. These equations have been studied extensively due to its analogous nature to many practical applications, and several numerical schemes have been developed to provide solutions dedicated to different environment conditions (such as different Reynolds numbers). More recently, methods have been developed to efficiently solve the compressible Navier-Stokes equations at very low Mach numbers \cite{65rr,95rr}. For these flows, the mesh must be highly refined in order to accurately resolve the viscous regions. This leads to small time steps and subsequently, long computing times if an explicit scheme or implicit method is used. A possible improvement is to use a two-level or multilevel method \cite{2mx,15ynh,hmr,mx,38mx,20hmr,24ynh} and a two-level explicit-implicit scheme such as the MacCormack rapid solver (MCRS) method, which is the hybrid version of the two-level explicit MacCormack \cite{mc1}. This hybrid approach is considered in this paper as a coupled explicit MacCormack algorithm and Crank-Nicolson scheme. In reality, the MacCormack algorithm which is a predictor-corrector, finite difference scheme provides good resolution at discontinuities and the best resolution of discontinuities occurs when the difference in the predictor is in the direction of the propagation of the discontinuity. This method has been widely used to solve certain class of nonlinear PDEs. As a consequence, the authors \cite{en4} applied this approach for solving a complex nonlinear PDE and they obtained satisfactory results regarding the stability and the convergence rate of the scheme. For multidimensional problems, a time-split version of the MacCormack method has been developed and deeply studied  (for instance, see \cite{en3,en5}, \cite{apt} pages: $230$-$231$). However, this scheme is not a satisfactory approach for solving high Reynolds numbers flow, where the viscous regions becomes very thin (\cite{apt}, p. $631$-$632$).\\

     The aim of our study is to find an efficient numerical solution of the initial-value boundary problem $(\ref{1})$-$(\ref{4}),$ using a two-level MacCormack rapid solver algorithm, that is, a combination of an explicit MacCormack method and a Crank-Nicolson scheme. An application of the MCRS approach to the $3$D unsteady Navier-Stokes equations can be found in \cite{apt}, pages: $631$-$632.$ There are many reasons as discussed in \cite{apt,rr,1ynh,2ynh} that have led to active research and developing effective and efficient techniques for both stationary and nonstationary models so that existing single-model solvers can be applied locally with little extra computational and software overhead. In \cite{7ynh,21ynh}, the authors analyzed the local error estimates, stability and convergence of a two-level method obtained by the semi-discretization in space together with the full discretization in space-time of the $2$D and $3$D time dependent Navier-Stokes equations, but the global error estimates are not provided. Furthermore, the author \cite{rr}, section $4.2,$ P. $36$-$39,$ applied the Crank-Nicolson approach and the Fractional-Step-$\theta$ scheme to the nonstationary and incompressible flows in the cross-section of a channel at Reynolds number, $Re=500,$ and he has observed that both methods have shown equally satisfactory results. More recently \cite{en1,en2,en}, MCRS method has been deeply studied for both coupled Stokes-Darcy model and $2$D incompressible Navier-Stokes equations. In this paper, we are interested by the error estimates and the convergence rate of the rapid solver method applied to $2$D incompressible nonstationary equations $(\ref{1})$-$(\ref{4})$. This represents a continuation of the work studied in \cite{en}. Some numerical experiments that confirm the theoretical analysis are also considered.\\

     The remainder of the paper is organized as follows: Section $\ref{II}$ deals with the variational formulation of the $2$D nonstationary incompressible Navier-Stokes equations. In section $\ref{III}$ we analyze the error estimates of the two-level hybrid algorithm for problem $(\ref{1})$-$(\ref{4}).$ Section $\ref{IV}$ considers some numerical experiments while in section $\ref{V},$ we draw the general conclusion and present the future direction of works.

      \section{Weak formulation of the $2$D time dependent incompressible Navier-Stokes equations}\label{II}
      This section considers some notations together with the basic theoretical concepts that help to analyze the error estimates and convergence rate of MCRS scheme. The variational formulation of $2$D nonstationary incompressible Navier-Stokes model along with the discrete weak formulation of the rapid solver method for solving problem $(\ref{1})$-$(\ref{4})$ are presented.\\

      In order to introduce the weak formulation of problem $(\ref{1})$-$(\ref{4}),$ we define the following spaces
      \begin{equation}\label{5}
        W=(H_{0}^{1}(\Omega_{f}))^{2},\text{\,\,\,\,\,}X=(L^{2}(\Omega_{f}))^{2}\text{\,\,\,\,\,and\,\,\,\,\,}
        Q=W_{2}^{0}(\Omega_{f}):=\left\{z\in L^{2}(\Omega_{f}):\text{\,}\int_{\Omega_{f}}z(x)dx=0\right\}.
      \end{equation}
      $W_{2}^{0}(\Omega_{f})$ is endowed with the usual $L^{2}$-scalar product $(\cdot,\cdot)$ and the related norm is represented by $\|\cdot\|_{L_{f}^{2}}.$ Now, let $L^{2}$-norm on $\Omega_{f}$ (resp. $\Gamma$) be also denoted by $\|\cdot\|_{L_{f}^{2}}$ (resp. $\|\cdot\|_{\Gamma}$) and the corresponding inner product be denoted by $(\cdot,\cdot).$ Furthermore, the space $H_{0}^{1}(\Omega_{f})$ is equipped with the scalar product and norm
      \begin{equation}\label{7}
     (u,v)_{1}=(\nabla u,\nabla v),\text{\,\,}\|u\|_{1}=\|\nabla u\|_{L_{f}^{2}}=\sqrt{(u,u)_{1}},\text{\,\,}
     \forall u,v\in H_{0}^{1}(\Omega_{f}).
      \end{equation}
      We also equip both spaces $X$ and $W$ with the following scalar products and norms
      \begin{equation}\label{8}
      (u,v)_{X}=(u,v),\text{\,\,}\|u\|_{L_{f}^{2}}=\sqrt{(u,u)},\text{\,\,}\forall u,v\in X; \text{\,\,}(u,v)_{W}=\nu(\nabla u,\nabla v),
      \text{\,\,}\|u\|_{W}=\sqrt{(u,u)_{W}},\text{\,\,}\forall u,v\in W.
      \end{equation}
      Since $H_{0}^{1}(\Omega_{f})\subset H^{1}(\Omega_{f})\subset L^{2}(\Omega_{f}),$ so $W$ is a subspace of $X$
      and it comes from equations $(\ref{8})$
      \begin{equation}\label{9}
        \|u\|_{W}=\sqrt{\nu}\|\nabla u\|_{L_{f}^{2}},\text{\,\,\,\,\,\,}\forall u\in W.
      \end{equation}
       In this study, the initial condition $u_{0}$ and the external force $f$ are assumed to be regular enough
       so that the initial-boundary value problem $(\ref{1})$-$(\ref{4}),$ admits a smooth solution. We recall the Poincar\'{e}-Friedrichs inequality $(\ref{10})$ which plays a crucial role in our study.
      \begin{equation}\label{10}
        \|z\|_{L_{f}^{2}}\leq \frac{C_{f}}{\sqrt{\nu}}\|z\|_{W}, \text{\,\,\,for\,\,some\,\,constant\,\,}C_{f}>0.
      \end{equation}
      We define both bilinear forms $a(\cdot,\cdot)$ and $\chi(\cdot,\cdot)$ on $W\times W$ and $W\times Q,$ respectively, by
      \begin{equation}\label{12}
        a(u,v)=(u,v)_{W},\text{\,\,\,}\forall u,v\in W,\text{\,\,\,\,\,\,and\,\,\,\,\,\,\,}
        \chi(u,g)=(g,\nabla\cdot u),\text{\,\,\,}\forall u\in W, \text{\,\,\,}\forall g\in Q,
      \end{equation}
      and let introduce the closed subspace $F$ of $W$ defined by
      \begin{equation}\label{14}
        F=\{v\in W: \chi(v,g)=0,\text{\,}\forall g\in Q\}=\{v\in W:\text{\,}\nabla\cdot v=0,\text{\,\,in\,\,} \Omega_{f}\}.
      \end{equation}
       Now, let us introduce the unbounded linear operator $A$ defined on $X$ by $Au=-\Delta u.$
       We choose $\Omega_{f}$ so that $D(A)$ (the domain of $A$) satisfies
      \begin{equation}\label{15b}
        D(A)=W\cap\left(W_{2}^{2}(\Omega_{f})\right)^{2}.
      \end{equation}
      In literature \cite{9ynh}, it is proven that equation $(\ref{15b})$ holds if $\Gamma$ is of class $C^{2}$
      or if $\Omega_{f}$ is a convex plane polygonal domain.\\

      Finally, setting $b(u,v,w)=\frac{1}{2}((u\cdot\nabla)v,w)+\frac{1}{2}((\nabla\cdot u)v,w)=\frac{1}{2}((u\cdot\nabla)v,w)
      -\frac{1}{2}((u\cdot\nabla)w,v),$ for all $u,v,w\in W.$ In \cite{1ynh,8ynh,11ynh} the authors showed that the mapping $b(\cdot,\cdot,\cdot)$ is a trilinear form, continuous and satisfying the following properties:
      \begin{equation}\label{16}
        b(u,v,w)=-b(u,w,v),\text{\,\,\,\,\,\,\,\,} b(u,v,v)=0,\text{\,\,\,}\forall u,v,w\in W,
      \end{equation}
      and
      \begin{equation}\label{17}
        |b(u,v,w)|\leq\alpha\|u\|_{L_{f}^{2}}^{\frac{1}{2}}\|u\|_{W}^{\frac{1}{2}}\|v\|_{W}\|w\|_{L_{f}^{2}}^{\frac{1}{2}}
        \|w\|_{W}^{\frac{1}{2}}+\alpha\|u\|_{W}\|v\|_{L_{f}^{2}}^{\frac{1}{2}}\|v\|_{W}^{\frac{1}{2}}
        \|w\|_{W}^{\frac{1}{2}}\|w\|_{W}^{\frac{1}{2}},\text{\,\,\,}\forall u,v,w\in W.
      \end{equation}
      Using the Poincar\'{e}-Friedrichs inequality, estimate $(\ref{17})$ implies
      \begin{equation}\label{18}
        |b(u,v,w)|\leq\alpha\|u\|_{W}\|v\|_{W}\|w\|_{W},\text{\,\,\,}\forall u,v,w\in W,
      \end{equation}
      where $\alpha$ is a positive constant whose value may be different from place to place.\\

       Given two functions $f\in L^{\infty}(\mathbb{R}_{0}^{+};X)$ and $u_{0}\in F$, it comes from the
       definitions of both bilinear and trilinear forms $a(\cdot,\cdot)$ and $b(\cdot,\cdot,\cdot),$ that a weak formulation of the $2$D time dependent Navier-Stokes model $(\ref{1})$-$(\ref{4}),$ reads as follows:
       find a pair $(u,p)$ with
      \begin{equation}\label{19}
        u\in L^{\infty}(\mathbb{R}_{0}^{+};X)\cap L^{2}(0,T;F),\text{\,\,}u_{t}\in L^{2}(0,T;F^{'}) \text{\,\,and\,\,}
         p\in D^{'}(\Omega_{f}\times(0,T)),
      \end{equation}
      satisfying
      \begin{equation}\label{20}
        (u_{t},v)+a(u,v)+b(u,u,v)-\chi(v,p)+\chi(u,q)=(f,v),\text{\,\,\,}\forall u,v\in W,\text{\,\,}
        \forall q\in Q,
      \end{equation}
      \begin{equation}\label{21}
        u(0)=u_{0}.
      \end{equation}
      It is well known in literature (for instant, we refer the readers to \cite{8ynh,11ynh}) that the system of equations $(\ref{20})$-$(\ref{21})$ has a unique solution $(u,p).$

       To discretize the time dependent Navier-Stokes problem $(\ref{20})$-$(\ref{21})$ in space by finite element method (FEM), we construct finite element spaces
        \begin{equation*}
            \text{\,velocity:\,\,}W_{h}\subset W,\text{\,\,Stokes\,\,pressure:\,\,}Q_{h}\subset Q,
        \end{equation*}
        based on a conforming FEM triangulation $\Pi_{h}$ of the domain $\Omega_{f}\cup\Gamma,$ with maximum triangle (or tetrahedra) diameter denoted by $"h".$ Furthermore, the velocity-pressure FEM spaces $W_{h}$ and $Q_{h}$ are assumed to satisfy the well known discrete $\inf$-$\sup$ condition for stability of the pressure, that is, for every $q_{h}\in Q_{h},$ there is a $u_{h}\in W_{h},$ $u_{h}\neq0,$ such that
        \begin{equation}\label{25}
            \chi(u_{h},q_{h})\geq\beta\|u_{h}\|_{W}\|q_{h}\|_{L_{f}^{2}},
        \end{equation}
        where $\beta>0,$ is a constant independent of $h.$ We denote the discretely divergence free velocity by
        \begin{equation}\label{26}
            V_{\mu}=W_{\mu}\cap\{v_{\mu}:\text{\,}\chi(v_{\mu},q_{\mu})=0,\text{\,}\forall q_{\mu}\in Q_{\mu}\},
            \text{\,\,\,where\,\,\,}\mu=H,h.
        \end{equation}
        Now, let $\Phi_{\mu}:X\rightarrow W_{\mu},$ and $\Psi_{\mu}:Q\rightarrow Q_{\mu},$ (where $\mu=H,h$) be the $L^{2}$-orthogonal projections defined by
        \begin{equation}\label{27}
            (\Phi_{\mu}u,u_{\mu})=(u,u_{\mu}),\text{\,\,}\forall u\in X,\text{\,\,}\forall u_{\mu}\in W_{\mu},
        \end{equation}
        \begin{equation*}\label{28}
            (\Psi_{\mu}q,q_{\mu})=(q,q_{\mu}),\text{\,\,}\forall q\in Q,\text{\,\,}\forall q_{\mu}\in Q_{\mu},
        \end{equation*}
        respectively. We assume that the finite element spaces $W_{h}$ and $Q_{h}$ satisfy the first order approximation $O(h).$ The corresponding inverse estimate is well known and given by
        \begin{equation*}\label{30}
            \|v_{h}\|_{W}\leq \widetilde{C}h^{-1}\|v_{h}\|_{L_{f}^{2}},\text{\,\,\,\,}\forall v_{h}\in W_{h},
        \end{equation*}
        where $\widetilde{C}$ is a generic constant depending on the data $\Omega_{f},$ $\nu,$ and $f,$ which may stand for values at different occurrences, but is independent of the mesh size $h$ and time step $\Delta t.$ Finally, we also will use the following Poincar\'{e} inequality
         \begin{equation}\label{34}
            \lambda_{1}\nu\|u\|_{L_{f}^{2}}^{2}\leq\|u\|_{W}^{2},\text{\,\,\,}\forall u\in W,
         \end{equation}
         where $\lambda_{1}$ is the first eigenvalue of the operator $A$ given by equation $(\ref{15b}).$\\

         We propose a decoupling scheme based on the semi-discretization in space on the coupling terms. This leads to an efficient decoupled marching algorithm and easy implementation. Following the works developed in \cite{24ynh,15ynh,7ynh,21ynh,1ynh}, we describe how to approximate $u_{h}(t)$ in
         the coupling term $b(u_{h},u_{h},v_{h})$ by an appropriate extrapolation of the computed solutions from the previous steps. With the two-level MCRS method, we should approximate $u_{h}(t)|_{b(\cdot,\cdot,\cdot)}$ by the corresponding spatial extrapolation $u_{H}(t)|_{b(\cdot,\cdot,\cdot)}$. More specifically, find $(u_{h},p_{h}):(0,T)\rightarrow W_{h}\times Q_{h},$ that satisfies:
         \begin{description}
           \item[\textbf{Step1:}] using the explicit MacCormack algorithm to find $(u_{H}(t),p_{H}(t))\in W_{H}\times Q_{H},$ such that for all $(v_{H},q_{H})\in W_{H}\times Q_{H},$
               \begin{equation}\label{35a}
                \left.
                  \begin{array}{ll}
                    (u_{H,t},v_{H})+a(u_{H},v_{H})+b(u_{H},u_{H},v_{H})-\chi(v_{H},p_{H})+\chi(u_{H},q_{H})=(f,v_{H}), & \hbox{} \\
                    \text{\,}\\
                    u_{H}(0)=\Phi_{H}u_{0}; & \hbox{}
                  \end{array}
                \right.
               \end{equation}

           \item[\textbf{Step2:}] with the Crank-Nicolson scheme, find $(u_{h}(t),p_{h}(t))\in
           W_{h}\times Q_{h},$ such that for all $(v_{h},q_{h})\in W_{h}\times Q_{h},$
               \begin{equation}\label{35b}
                \left.
                  \begin{array}{ll}
                    (u_{h,t},v_{h})+a(u_{h},v_{h})+b(u_{H},u_{H},v_{h})-\chi(v_{h},p_{h})+\chi(u_{h},q_{h})=(f,v_{h}), & \hbox{} \\
                    \text{\,}\\
                    u_{h}(0)=\Phi_{h}u_{0}. & \hbox{}
                  \end{array}
                \right.
               \end{equation}
         \end{description}
         where $\Phi_{H}$ is defined by relation $(\ref{27}).$ It is worth noticing to mention that $(W_{H},Q_{H})\subset(W_{h},Q_{h}).$ We recall that the aim of this paper is to give a general picture of both error estimates and convergence rate of the hybrid method. Since the formulas can become quite heavy, for the sake of readability, we should use the same time step $\Delta t,$ in the two sets $\Omega_{f}$ and $\Gamma.$ Noting $t^{n}=n\Delta t,$ and $T=N\Delta t$ (if $T=\infty$ then $N=\infty$). Also denote $u^{n}:=u(t_{n})$ (and similarly for other variables). We introduce the following discrete norms,
         \begin{equation}\label{n1}
            \||v|\|_{L^{2}(0,T;X)}^{2}:=\Delta t\overset{N}{\underset{n=0}\sum}\|v^{n}\|_{L_{f}^{2}}^{2},
            \text{\,\,\,for\,\,}v\in L^{2}(0,T;X).
         \end{equation}

         The discrete variational formulation of the explicit MacCormack scheme for equations $(\ref{1})$-$(\ref{4})$ on coarse mesh reads: given $(u_{H}^{n},p_{H}^{n})\in V_{H}\times Q_{H},$ find an approximation $(u_{H}^{n+1},p_{H}^{n+1})\in V_{H}\times Q_{H},$ for $n\geq0,$ such that, for all $(v_{H},q_{H})\in V_{H}\times Q_{H},$ it holds
         \begin{description}
           \item[Predictor:]
           \begin{equation}\label{36}
            \left(\frac{u_{H}^{\overline{n+1}}-u_{H}^{n}}{\Delta t},v_{H}\right)+a(u_{H}^{n},v_{H})+b(u_{H}^{n},u_{H}^{n},v_{H})-\chi(v_{H},p_{H}^{n})
            +\chi(u_{H}^{n},q_{H})=(f^{n},v_{H}),
           \end{equation}
           \begin{equation}\label{37}
            u_{H}^{\overline{0}}=\Phi_{H}u(\overline{t_{0}});
           \end{equation}

           \item[Corrector:]
           \begin{equation*}
            \left(\frac{u_{H}^{n+1}-\frac{u_{H}^{\overline{n+1}}+u_{H}^{n}}{2}}{\frac{\Delta t}{2}},v_{H}\right)+a(u_{H}^{\overline{n+1}},v_{H})
            +b(u_{H}^{\overline{n+1}},u_{H}^{\overline{n+1}},v_{H})-\chi(v_{H},p_{H}^{\overline{n+1}})+\chi(u_{H}^{\overline{n+1}},q_{H})=
           \end{equation*}
           \begin{equation}\label{38}
            (f^{\overline{n+1}},v_{H}),
           \end{equation}
           \begin{equation}\label{39}
            u_{H}^{0}=\Phi_{H}u(t_{0}),
           \end{equation}
         \end{description}
         where $\Phi_{H}$ is defined by relation $(\ref{27}).$ Furthermore, to keep the second order accuracy of Crank-Nicolson scheme we use the second order approximations $u_{h}^{n}=\frac{u_{h}^{n+1}+u_{h}^{n-1}}{2}$ and $p_{h}^{n}=\frac{p_{h}^{n+1}+p_{h}^{n-1}}{2}.$ So, a (monolithic) weak formulation of the Crank-Nicolson method for problem $(\ref{1})$-$(\ref{4})$ on fine grid reads: given $(u_{h}^{n-1},p_{h}^{n-1})$, $(u_{h}^{n},p_{h}^{n})\in V_{h}\times Q_{h},$ find an approximation $(u_{h}^{n+1},p_{h}^{n+1})\in V_{h}\times Q_{h},$ for $n\geq1,$ such that, for all $(v_{h},q_{h})\in V_{h}\times Q_{h}$
           \begin{equation*}
            \left(\frac{u_{h}^{n}-u_{h}^{n-1}}{\Delta t},v_{h}\right)+a\left(\frac{u_{h}^{n+1}+u_{h}^{n-1}}{2},v_{h}\right)
            +b(u_{H}^{n},u_{H}^{n},v_{h})-\chi\left(v_{h},\frac{p_{h}^{n+1}+p_{h}^{n-1}}{2}\right)
            +\chi\left(\frac{u_{h}^{n+1}+u_{h}^{n-1}}{2},q_{h}\right)=
           \end{equation*}
           \begin{equation}\label{40a}
            (f^{n+1},v_{h}),
           \end{equation}
           \begin{equation}\label{40b}
            u_{h}^{0}=\Phi_{h}u(t_{0}).
           \end{equation}
         In $(\ref{36})$-$(\ref{38}),$ $u_{H}^{\overline{n+1}}$ and $p_{H}^{\overline{n+1}}$ are temporary "predicted" values of $u_{H}$ and $p_{H},$ respectively, at the time level $n+1.$\\

         We know from the initial condition $(\ref{3})$ that $u_{H}^{0}$=$u_{H}^{\overline{0}}$=$u_{h}^{0}$ and $p_{H}^{0}$=$p_{H}^{\overline{0}}$=$p_{h}^{0}.$ In the subsequent sections, we suppose that $p_{H}^{\overline{1}}=p_{H}^{1}=p_{h}^{1}.$ Assuming that the superscript $\overline{n+1}$ is a time level, it is obvious to see that the two-level MCRS is a three steps method, so the initial data $u_{H}^{0},$ $p_{H}^{0}$ and the terms $u_{h}^{1}$ and $p_{H}^{1}$ are needed to begin the algorithm. Both terms $u_{h}^{1}$ and $p_{H}^{1}$ can be obtained by a two-step method that solves the system, such as by two-level method introduced in \cite{21ynh}.\\

         The following result plays a crucial role in the proof of the main result of this paper (namely Theorem $\ref{t2}$).

         \begin{lemma}\label{l1}. Let $f\in L^{2}(0,T;X)$ and $u_{0}\in D(A)\cap F$ be given. Consider the MacCormack algorithm $(\ref{36})$ and $(\ref{38}).$ For all $N\in\mathbb{N},$ $N\geq1,$ it holds
        \begin{equation}\label{41}
        3\|u_{H}^{N}\|_{X}^{2}+\|u_{H}^{\overline{N}}\|_{X}^{2}+2\nu\Delta t\underset{n=0}{\overset{N-1}\sum}\left(\|u_{H}^{n}\|_{W}^{2}
         +\|u_{H}^{\overline{n+1}}\|_{W}^{2}\right)\leq\frac{2\Delta t}{\lambda_{1}\nu^{2}}\underset{n=0}{\overset{N-1}\sum}
         \left(\|f^{\overline{n+1}}\|_{X}^{2}+\|f^{n}\|_{X}^{2}\right)+4\|u_{H}^{0}\|_{X}^{2},
        \end{equation}
        where $\lambda_{1}>0,$ represents the first eigenvalue of the unbounded linear operator $A$ that satisfies relation $(\ref{15b}).$
        \end{lemma}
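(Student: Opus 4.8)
The plan is to derive an energy estimate by making strategic test-function choices in the predictor and corrector equations, then summing over the time levels and telescoping. This is the standard technique for obtaining a priori bounds on explicit/predictor-corrector finite difference schemes for Navier--Stokes-type problems, and the structure of the right-hand side in $(\ref{41})$ (involving $\|f^{\overline{n+1}}\|_X^2$ and $\|f^n\|_X^2$ with the factor $\frac{2}{\lambda_1\nu^2}$) strongly suggests that Young's inequality combined with the Poincar\'{e} inequality $(\ref{34})$ is applied to the forcing terms.

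First I would work in the space $V_H$ of discretely divergence-free functions, so that upon choosing test functions in $V_H$ the pressure-coupling terms $\chi(v_H,p_H)$ and the incompressibility terms $\chi(u_H,q_H)$ vanish identically. This is the reason the lemma is stated over $V_H\times Q_H$. In the predictor $(\ref{36})$ I would test with $v_H=u_H^n$ and in the corrector $(\ref{38})$ test with $v_H=u_H^{\overline{n+1}}$; the crucial simplification is that the trilinear form satisfies $b(u,v,v)=0$ by $(\ref{16})$, so the convective terms $b(u_H^n,u_H^n,u_H^n)$ and $b(u_H^{\overline{n+1}},u_H^{\overline{n+1}},u_H^{\overline{n+1}})$ both drop out. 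After these substitutions, the diffusion terms become $a(u_H^n,u_H^n)=\nu\|u_H^n\|_1^2=\|u_H^n\|_W^2$ and similarly $\|u_H^{\overline{n+1}}\|_W^2$, producing the viscous dissipation sum on the left-hand side.

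Next I would handle the time-difference inner products. For the predictor, $\left(\frac{u_H^{\overline{n+1}}-u_H^n}{\Delta t},u_H^n\right)$ should be rewritten using the polarization-type identity $(a-b,b)=\frac12(\|a\|^2-\|b\|^2-\|a-b\|^2)$ to extract differences of $\|u_H^n\|_X^2$ and $\|u_H^{\overline{n+1}}\|_X^2$; the corrector's time term $\left(\frac{u_H^{n+1}-\frac{u_H^{\overline{n+1}}+u_H^n}{2}}{\Delta t/2},u_H^{\overline{n+1}}\right)$ is treated analogously. For the forcing, I would bound $(f^n,u_H^n)\le \|f^n\|_X\|u_H^n\|_X$ and then use $(\ref{34})$ in the form $\|u_H^n\|_X\le(\lambda_1\nu)^{-1/2}\|u_H^n\|_W$, followed by Young's inequality $ab\le\frac{1}{4\epsilon}a^2+\epsilon b^2$ with $\epsilon$ tuned so that the resulting $\|u_H^n\|_W^2$ term is absorbed into the viscous dissipation on the left; this tuning is exactly what generates the factor $\frac{2}{\lambda_1\nu^2}$ and leaves the correct fraction of the dissipation term. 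Summing both the predictor-tested and corrector-tested identities over $n=0,\dots,N-1$ should telescope the $X$-norm terms, yielding the coefficients $3$ and $1$ on $\|u_H^N\|_X^2$ and $\|u_H^{\overline{N}}\|_X^2$ and the constant $4\|u_H^0\|_X^2$ after accounting for the doubled contributions from the two equations.

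The main obstacle I anticipate is the careful bookkeeping required to get the precise numerical constants $3$, $1$, $2$, and $4$ right: the predictor and corrector are weighted differently (the corrector uses step $\Delta t/2$ and averages $u_H^{\overline{n+1}}$ and $u_H^n$), so the telescoping does not produce a clean collapse but rather an asymmetric combination that must be recombined with the right multiplicities. In particular, keeping track of how the $\|u_H^{\overline{n+1}}-u_H^n\|_X^2$ and analogous squared-difference remainder terms either cancel or contribute nonnegatively (so they can be discarded to obtain an inequality rather than an identity) is where the argument is most delicate. I would expect to take a specific linear combination of the two tested identities — plausibly adding the corrector identity to a suitable multiple of the predictor identity — chosen precisely so that the cross terms telescope and the leftover squared differences have the correct sign, thereby yielding $(\ref{41})$.
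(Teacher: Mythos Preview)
Your choice of test functions ($v_H=u_H^n$ in the predictor, $v_H=u_H^{\overline{n+1}}$ in the corrector), the cancellation of the pressure and trilinear terms via $(\ref{16})$, and the treatment of the forcing via Poincar\'{e} $(\ref{34})$ plus Young's inequality all match the paper exactly. The gap is in the time-difference terms. If you apply the polarization identity $(a-b,b)=\tfrac12(\|a\|^2-\|b\|^2-\|a-b\|^2)$ directly, adding the two tested identities yields
\[
\frac{1}{\Delta t}\Bigl(\|u_H^{n+1}\|_X^2-\|u_H^n\|_X^2-\|u_H^{n+1}-u_H^{\overline{n+1}}\|_X^2\Bigr)
+\|u_H^n\|_W^2+\|u_H^{\overline{n+1}}\|_W^2=\text{RHS},
\]
and the term $-\|u_H^{n+1}-u_H^{\overline{n+1}}\|_X^2$ has the \emph{wrong sign}: it moves to the right as a nonnegative quantity you cannot absorb without a CFL-type restriction. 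So the hope that the squared differences ``cancel or contribute nonnegatively'' fails, and no linear combination of the two raw identities repairs this.

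The paper's device is different and is what produces the coefficients $3$ and $1$. It treats the discrete iterates as samples of a smooth function and uses the Taylor-type midpoint relations $u_H^{\overline{n+1}}=\tfrac12(u_H^{n+1}+u_H^n)+O(\Delta t^2)$ and $u_H^{n}=\tfrac12(u_H^{\overline{n+1}}+u_H^{\overline{n}})+O(\Delta t^2)$ \emph{inside} the inner products. Substituting the second into the predictor time term gives $\tfrac{1}{4\Delta t}(\|u_H^{\overline{n+1}}\|_X^2-\|u_H^{\overline{n}}\|_X^2)+O(\Delta t)$, and the first into the corrector time term gives $\tfrac{3}{4\Delta t}(\|u_H^{n+1}\|_X^2-\|u_H^{n}\|_X^2)+O(\Delta t)$. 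This creates two \emph{separate} clean telescopes---one over barred indices with weight $1$, one over plain indices with weight $3$---so the sum over $n=0,\dots,N-1$ collapses to $3\|u_H^N\|_X^2+\|u_H^{\overline{N}}\|_X^2-4\|u_H^0\|_X^2$ (using $u_H^0=u_H^{\overline{0}}$), and the sign-indefinite differences are hidden in the $O(\Delta t)$ remainder, which the paper then explicitly discards. That Taylor-substitution step, not a polarization bookkeeping argument, is the missing idea in your proposal.
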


        \begin{proof}
        It comes from equations $(\ref{36})$ and $(\ref{38}),$ that
        \begin{equation}\label{1*}
        \frac{1}{\Delta t}\left(u_{H}^{\overline{n+1}}-u_{H}^{n},v_{H}\right)+a(u_{H}^{n},v_{H})+b(u_{H}^{n},u_{H}^{n},v_{H})-
        \chi(v_{H},p_{H}^{n})+\chi(u_{H}^{n},q_{H})=(f^{n},v_{H}),
        \end{equation}
        and
       \begin{equation}\label{2*}
        \frac{1}{\Delta t}\left(2u_{H}^{n+1}-u_{H}^{\overline{n+1}}-u_{H}^{n},v_{H}\right)+a(u_{H}^{\overline{n+1}},v_{H})+
        b(u_{H}^{\overline{n+1}},u_{H}^{\overline{n+1}},v_{H})-\chi(v_{H},p_{H}^{\overline{n+1}})
        +\chi(u_{H}^{\overline{n+1}},q_{H})=(f^{\overline{n+1}},v_{H}).
        \end{equation}
         Taking $v_{H}=u_{H}^{n},$ $q_{H}=p_{H}^{n}$ in equation $(\ref{1*})$, and $v_{H}=u_{H}^{\overline{n+1}},$ $q_{H}=p_{H}^{\overline{n+1}},$ in relation $(\ref{2*}),$ the terms $-\chi(v_{H},p_{H}^{n})+\chi(u_{H}^{n},q_{H})$ and $-\chi(v_{H},p_{H}^{\overline{n+1}})+\chi(u_{H}^{\overline{n+1}},q_{H})$ cancel. In addition, since the trilinear form $b(\cdot,\cdot,\cdot)$ is skew-symmetric, the terms $b(u_{H}^{n},u_{H}^{n},v_{H})$ and
         $b(u_{H}^{\overline{n+1}},u_{H}^{\overline{n+1}},v_{H})$ also cancel. So, equations $(\ref{1*})$ and $(\ref{2*})$ become
         \begin{equation}\label{3*}
        \frac{1}{\Delta t}\left(u_{H}^{\overline{n+1}}-u_{H}^{n},u_{H}^{n}\right)+a(u_{H}^{n},u_{H}^{n})=(f^{n},u_{H}^{n}),
        \end{equation}
       \begin{equation}\label{4*}
        \frac{1}{\Delta t}\left(2u_{H}^{n+1}-u_{H}^{\overline{n+1}}-u_{H}^{n},u_{H}^{\overline{n+1}}\right)+
        a(u_{H}^{\overline{n+1}},u_{H}^{\overline{n+1}})=(f^{\overline{n+1}},u_{H}^{\overline{n+1}}).
        \end{equation}
         Now, we should approximate the terms $u_{H}^{n}$ and $u_{H}^{\overline{n+1}}.$ Since the MacCormack scheme is second order accurate, following the MacCormack approach, we must approximate these terms using the central difference representation. Expanding the Taylor series with time step
         $\frac{\Delta t}{2},$ for both predicted and corrected values yields
         \begin{equation}\label{5*}
            u_{H}^{n+1}=u_{H}^{\overline{n+1}}+\frac{\Delta t}{2}u_{H,t}^{\overline{n+1}}+O(\Delta t^{2});\text{\,\,\,\,\,\,\,\,}
            u_{H}^{n}=u_{H}^{\overline{n+1}}-\frac{\Delta t}{2}u_{H,t}^{\overline{n+1}}+O(\Delta t^{2});
         \end{equation}
         and
         \begin{equation}\label{7*}
            u_{H}^{\overline{n+1}}=u_{H}^{n}+\frac{\Delta t}{2}u_{H,t}^{n}+O(\Delta t^{2});\text{\,\,\,\,\,\,\,\,}
            u_{H}^{\overline{n}}=u_{H}^{n}-\frac{\Delta t}{2}u_{H,t}^{n}+O(\Delta t^{2}).
         \end{equation}
         Utilizing equations $(\ref{5*})$ and $(\ref{7*}),$ simple calculations give
         \begin{equation}\label{9*}
            u_{H}^{\overline{n+1}}=\frac{u_{H}^{n+1}+u_{H}^{n}}{2}+O(\Delta t^{2})\text{\,\,\,\,and\,\,\,\,}
            u_{H}^{n}=\frac{u_{H}^{\overline{n+1}}+u_{H}^{\overline{n}}}{2}+O(\Delta t^{2}).
         \end{equation}
         Substituting $(\ref{9*})$ into $(\ref{3*})$ and $(\ref{4*})$, it is easy to see that
         \begin{equation*}
        \frac{1}{\Delta t}\left(u_{H}^{\overline{n+1}}-\frac{u_{H}^{\overline{n+1}}
        +u_{H}^{\overline{n}}}{2}+O(\Delta t^{2}),\frac{u_{H}^{\overline{n+1}}+u_{H}^{\overline{n}}}{2}+O(\Delta t^{2})\right)+a(u_{H}^{n},u_{H}^{n})=(f^{n},u_{H}^{n}),
        \end{equation*}
        and
       \begin{equation*}
        \frac{1}{\Delta t}\left(2u_{H}^{n+1}-\frac{u_{H}^{n+1}+u_{H}^{n}}{2}-u_{H}^{n}+O(\Delta t^{2}),\frac{u_{H}^{n+1}+u_{H}^{n}}{2}+O(\Delta t^{2})\right)+a(u_{H}^{\overline{n+1}},
        u_{H}^{\overline{n+1}})=(f^{\overline{n+1}},u_{H}^{\overline{n+1}}),
        \end{equation*}
        which imply
        \begin{equation*}
        \frac{1}{4\Delta t}\left(u_{H}^{\overline{n+1}}-u_{H}^{\overline{n}},u_{H}^{\overline{n+1}}+u_{H}^{\overline{n}}\right)
        +a(u_{H}^{n},u_{H}^{n})=(f^{n},u_{H}^{n})+O(\Delta t),
        \end{equation*}
        and
       \begin{equation*}
        \frac{1}{4\Delta t}\left(3(u_{H}^{n+1}-u_{H}^{n}),u_{H}^{n+1}+u_{H}^{n}\right)+a(u_{H}^{\overline{n+1}},
        u_{H}^{\overline{n+1}})=(f^{\overline{n+1}},u_{H}^{\overline{n+1}})+O(\Delta t),
        \end{equation*}
        which are equivalent to
        \begin{equation}\label{11*}
        \frac{1}{4\Delta t}\left[\|u_{H}^{\overline{n+1}}\|_{X}^{2}-\|u_{H}^{\overline{n}}\|_{X}^{2}\right]
        +a(u_{H}^{n},u_{H}^{n})=(f^{n},u_{H}^{n})+O(\Delta t),
        \end{equation}
        and
       \begin{equation}\label{12*}
        \frac{3}{4\Delta t}\left[\|u_{H}^{n+1}\|_{X}^{2}-\|u_{H}^{n}\|_{X}^{2}\right]+a(u_{H}^{\overline{n+1}},
        u_{H}^{\overline{n+1}})=(f^{\overline{n+1}},u_{H}^{\overline{n+1}})+O(\Delta t).
        \end{equation}
        Plugging relations $(\ref{11*})$ and $(\ref{12*})$ provides
        \begin{equation*}
         \frac{1}{4\Delta t}\left\{3(\|u_{H}^{n+1}\|_{X}^{2}-\|u_{H}^{n}\|_{X}^{2})+\|u_{H}^{\overline{n+1}}\|_{X}^{2}
         -\|u_{H}^{\overline{n}}\|_{X}^{2}\right\}
         +a(u_{H}^{n},u_{H}^{n})+a(u_{H}^{\overline{n+1}},u_{H}^{\overline{n+1}})=(f^{n},u_{H}^{n})+
        \end{equation*}
        \begin{equation}\label{13*}
            (f^{\overline{n+1}},u_{H}^{\overline{n+1}})+O(\Delta t).
        \end{equation}
        On the other hand, a combination of Cauchy-Schwarz inequality and Poincar\'{e} inequality $(\ref{34})$ gives
        \begin{equation}\label{14*}
        (f^{n},u_{H}^{n})\leq\|f^{n}\|_{X}\|u_{H}^{n}\|_{X}\leq\frac{1}{2\lambda_{1}\nu^{2}}
        \|f^{n}\|_{X}^{2}+\frac{\nu}{2}\|u_{H}^{n}\|_{W}^{2}.
        \end{equation}
        In way similar
        \begin{equation}\label{15*}
        (f^{\overline{n+1}},u_{H}^{\overline{n+1}})\leq\|f^{\overline{n+1}}\|_{X}\|u_{H}^{\overline{n+1}}\|_{X}
        \leq\frac{1}{2\lambda_{1}\nu^{2}}\|f^{\overline{n+1}}\|_{X}^{2}+\frac{\nu}{2}\|u_{H}^{\overline{n+1}}
        \|_{W}^{2}.
        \end{equation}
        Substituting estimates $(\ref{14*})$ and $(\ref{15*})$ into relation $(\ref{13*})$ and using equations $(\ref{12})$ and $(\ref{8})$
        results in
        \begin{equation*}
         \frac{1}{4\Delta t}\left\{3(\|u_{H}^{n+1}\|_{X}^{2}-\|u_{H}^{n}\|_{X}^{2})+\|u_{H}^{\overline{n+1}}\|_{X}^{2}
         -\|u_{H}^{\overline{n}}\|_{X}^{2}\right\}+\nu\left(\|u_{H}^{n}\|_{W}^{2}+\|u_{H}^{\overline{n+1}}
         \|_{W}^{2}\right)\leq
        \end{equation*}
        \begin{equation*}
          \frac{\nu}{2}\left(\|u_{H}^{n}\|_{W}^{2}+\|u_{H}^{\overline{n+1}}\|_{W}^{2}\right)+  \frac{1}{2\lambda_{1}\nu^{2}}\left(\|f^{\overline{n+1}}\|_{X}^{2}+\|f^{n}\|_{X}^{2}\right)
          +O(\Delta t).
        \end{equation*}
        Multiplying both sides of this estimate by $4\Delta t$, and after simplification, we get
        \begin{equation*}
         3(\|u_{H}^{n+1}\|_{X}^{2}-\|u_{H}^{n}\|_{X}^{2})+\|u_{H}^{\overline{n+1}}\|_{X}^{2}
         -\|u_{H}^{\overline{n}}\|_{X}^{2}+2\nu\Delta t\left(\|u_{H}^{n}\|_{W}^{2}
         +\|u_{H}^{\overline{n+1}}\|_{W}^{2}\right)\leq
        \end{equation*}
        \begin{equation}\label{16*}
         \frac{2\Delta t}{\lambda_{1}\nu^{2}}\left(\|f^{\overline{n+1}}\|_{X}^{2}+\|f^{n}\|_{X}^{2}\right)+O(\Delta t^{2}).
        \end{equation}
         Summing inequality $(\ref{16*})$ up from $n=0$ to $N-1,$ provides
        \begin{equation*}
         3\|u_{H}^{N}\|_{X}^{2}+\|u_{H}^{\overline{N}}\|_{X}^{2}+2\nu\Delta t\underset{n=0}{\overset{N-1}\sum}\left(\|u_{H}^{n}\|_{W}^{2}+\|u_{H}^{\overline{n+1}}\|_{W}^{2}\right)
         \leq\frac{2\Delta t}{\lambda_{1}\nu^{2}}\underset{n=0}{\overset{N-1}\sum}
         \left(\|f^{\overline{n+1}}\|_{X}^{2}+\|f^{n}\|_{X}^{2}\right)+4\|u_{H}^{0}\|_{X}^{2}
         +O(\Delta t).
        \end{equation*}
         The last estimate comes from the initial condition $u_{H}^{0}=u_{H}^{\overline{0}}.$ Neglecting the error term $O(\Delta t)$, the proof of Lemma $\ref{l1}$ is completed. Indeed, $\Delta t$ is small,
         so the tracking of the infinitesimal term $O(\Delta t)$ does not compromise the result.
        \end{proof}

        \section{Analysis of convergence rate of MCRS method}\label{III}
        In this section, we analyze both error estimates and rate of convergence of MCRS discrete variational formulation $(\ref{36})$-$(\ref{40b})$ for incompressible Navier-Stokes problem $(\ref{1})$-$(\ref{4}).$ We assume that our finite element method (FEM) spaces $W_{h}$ and $Q_{h}$ satisfy the usual approximation properties of the piecewise polynomial of degrees $m-1,$ $m,$ and $m+1$
        \begin{equation}\label{80b}
        \underset{u_{h}\in W_{h}}{\inf}\|u-u_{h}\|_{L^{2}_{f}}\leq Ch^{m+1}\|u\|_{W^{m+1}},\text{\,\,\,\,}\forall
        u\in W_{h},
        \end{equation}
        \begin{equation}\label{81b}
        \underset{u_{h}\in W_{h}}{\inf}\|u-u_{h}\|_{W}\leq Ch^{m}\|u\|_{W^{m+1}},\text{\,\,\,\,}\forall
        u\in W_{h},
        \end{equation}
        \begin{equation}\label{82b}
        \underset{\lambda_{h}\in Q_{h}}{\inf}\|p-\lambda_{h}\|_{L^{2}_{f}}\leq Ch^{m+1}\|p\|_{W^{m+1}_{2}},
        \text{\,\,\,\,}\forall p\in Q_{h},
        \end{equation}
         where $L_{f}^{2}=L^{2}(\Omega_{f}),$ $W^{m}$ is a subspace of $W_{2}^{m}(\Omega_{f})\times W_{2}^{m}(\Omega_{f}),$ and the Stokes velocity-pressure spaces satisfy the discrete $\inf$-$\sup$ condition $(\ref{25}).$ For example
         \begin{equation*}
         W_{h}=\{v\in H_{0}^{1}(\Omega_{f})^{d}:\text{\,}v|_{K}\in\mathcal{Q}_{m+1}(K),\text{\,}\forall K\in\Pi_{h}\},
         \end{equation*}
         \begin{equation*}
         Q_{h}=\{q\in W_{2}^{0}(\Omega_{f}):\text{\,}q=q_{m}+q_{0},\text{\,}q_{m}\in\mathcal{C}(\overline{\Omega}_{f}),
         \text{\,}q_{m}|_{K}\in\mathcal{Q}_{m}(K),\text{\,}q_{0}|_{K}\in\mathcal{Q}_{0}(K),\text{\,}\forall K\in\Pi_{h}\},
         \end{equation*}
         where $\mathcal{Q}_{m}(K)=\{r=\widehat{r}\circ F_{K}^{-1},\text{\,}\widehat{r}\in\mathcal{Q}_{m}\},$ $F_{K}$ is an invertible mapping which maps the reference cell $\widehat{K}=[0,1]^{d}$ onto a generic quadrilateral hexahedral element $K=conv\{a_{i}\in\mathbb{R}^{d},\text{\,}1\leq i\leq2^{d}\},$ and $Q_{m}$ is the space of tensor product polynomials defined as
         $Q_{m}=span\left\{\underset{j=1}{\overset{d}\prod}x_{j}^{\alpha_{j}}:\text{\,}\underset{1\leq j\leq d}{\max}\alpha_{j}\leq m\right\}.$ We recall that the discrete divergence free velocities are given by
        \begin{equation*}
        V_{h}:=W_{h}\cap\{u_{h}:\text{\,\,}\chi(u_{h},q_{h})=0, \text{\,\,}\forall q_{h}\in Q_{h}\}.
        \end{equation*}
        As a consequence, there exists a positive constant $\beta$ such that, for $u\in V_{h},$
        \begin{equation}\label{83b}
        \underset{u_{h}\in V_{h}}{\inf}\|u-u_{h}\|_{W}\leq \beta\underset{u_{h}\in W_{h}}{\inf}\|u-u_{h}\|_{W},
        \end{equation}
        for example, see \cite{8ynh}, chap. II, proof of Theorem $1.1,$ in the case where $W=H^{1}(\Omega_{f})$ and $W_{h}=X_{f}^{h}.$ Now, let $N$ be a positive integer, denote $u^{n}=u(t^{n})$ and $T=N\cdot\Delta t.$ For $m\neq0,$ we introduce the following discrete norms:
        \begin{equation}\label{84b}
        \||u|\|_{L^{\infty}(0,T,W)}:=\underset{0\leq n\leq N}{\max}\|u^{n}\|_{L_{f}^{2}},\text{\,\,}
         \||\nabla u|\|_{L^{\infty}(0,T,W^{m})}:=\underset{0\leq n\leq N}{\max}\|u^{n}\|_{W^{m}},
        \end{equation}
         and
        \begin{equation}\label{85b}
          \||u|\|_{L^{2}(0,T,W)}:=\left(\Delta t\cdot\underset{n=0}{\overset{N}\sum}\|u^{n}\|^{2}_{L_{f}^{2}}\right)^{1/2},\text{\,\,}
          \||\nabla u|\|_{L^{2}(0,T,W^{m})}:=\left(\Delta t\cdot\underset{n=0}{\overset{N}\sum}
          \|u^{n}\|^{2}_{W^{m}}\right)^{1/2}.
        \end{equation}

        In the remainder of this paper, we assume the following regularity of the analytical solution
        \begin{equation}\label{86b}
             u\in H^{4}\left(0,T;W\right)\cap L^{2}(0,T;W^{l+1})\cap L^{\infty}(0,T;W^{l+1}).
        \end{equation}
        Let denote the exact errors by $e_{u\mu}^{m}=u^{m}-u_{\mu}^{m}$ and $e_{p\mu}^{m}=p^{m}-p_{\mu}^{m},$ the "predicted" errors by $e_{u\mu}^{\overline{m}}=u^{\overline{m}}-u_{\mu}^{\overline{m}}$ and $e_{p\mu}^{\overline{m}}=p^{\overline{m}}-p_{\mu}^{\overline{m}},$ where $\mu=H,h.$ A difference between the weak formulation of the continuous problem $(\ref{20})$-$(\ref{21})$ and a monolithic formulation of the discrete problem $(\ref{36})$-$(\ref{40b}),$ evaluated at time $t^{n}$ and $t^{\overline{n+1}},$ respectively, yields:\\
        $\bullet$ \textbf{Step I}\\
        \textbf{Predictor}:
        \begin{equation*}
        \left(\frac{1}{2}u_{t}^{n}-\frac{u_{H}^{\overline{n+1}}-u_{H}^{n}}{\Delta t},v_{H}\right)+a\left(u^{n}-u^{n}_{H},v_{H}\right)+b(u^{n},u^{n},v_{H})-
        b(u_{H}^{n},u_{H}^{n},v_{H})-\chi(v_{H},p^{n})+
       \end{equation*}
       \begin{equation}\label{4n}
        \chi(v_{H},p_{H}^{n})+\chi(u^{n},q_{H})-\chi(u_{H}^{n},q_{H})=0;
       \end{equation}
       \textbf{Corrector}:
       \begin{equation*}
        \left(u_{t}^{\overline{n+1}}-\frac{u_{H}^{n+1}-\frac{u_{H}^{\overline{n+1}}+u_{H}^{n}}{2}}
        {\frac{\Delta t}{2}},v_{H}\right)+a\left(u^{\overline{n+1}}-u^{\overline{n+1}}_{H},v_{H}\right)
        +b(u^{\overline{n+1}},u^{\overline{n+1}},v_{H})-b(u_{H}^{\overline{n+1}},u_{H}^{\overline{n+1}},v_{H})
       \end{equation*}
       \begin{equation}\label{5n}
        -\chi(v_{H},p^{\overline{n+1}})+\chi(v_{H},p_{H}^{\overline{n+1}})+\chi(u^{\overline{n+1}},q_{H})
        -\chi(u_{H}^{\overline{n+1}},q_{H})=0;
       \end{equation}
       $\bullet$ \textbf{Step II}\\
       \begin{equation*}
        \left(u_{t}^{\overline{n}}-\frac{u_{h}^{n}-u_{h}^{n-1}}{\Delta t},v_{h}\right)
        +a\left(u^{n}-\frac{u^{n+1}_{h}+u^{n-1}_{h}}{2},v_{h}\right)
        +b(u^{n},u^{n},v_{h})-b(u_{H}^{n},u_{H}^{n},v_{h})
       \end{equation*}
       \begin{equation}\label{6n}
        -\chi(v_{h},p^{n})+\chi\left(v_{h},\frac{p_{h}^{n+1}+p_{h}^{n-1}}{2}\right)+\chi(u^{n},q_{h})
        -\chi\left(\frac{u_{h}^{n+1}+u_{h}^{n-1}}{2},q_{h}\right)=0.
       \end{equation}
        After straightforward computations and rearranging terms, equations $(\ref{4n})$-$(\ref{6n})$ give
       \begin{equation*}
        \frac{1}{\Delta t}\left(e_{uH}^{\overline{n+1}}-e_{uH}^{n},v_{H}\right)+a\left(e_{uH}^{n},v_{H}\right)
        -\chi(v_{H},e_{pH}^{n})+\chi(e_{uH}^{n},q_{H})=-\left(\frac{1}{2}u_{t}-\frac{u^{\overline{n+1}}
        -u^{n}}{\Delta t},v_{H}\right)
       \end{equation*}
       \begin{equation}\label{7n}
        -b(e_{uH}^{n},u^{n},v_{H})-b(u_{H}^{n},e_{uH}^{n},v_{H});
       \end{equation}
       \begin{equation*}
        \frac{1}{\Delta t}\left(2e_{uH}^{n+1}-e_{uH}^{\overline{n+1}}-e_{uH}^{n},v_{H}\right)+a\left(e_{uH}^{\overline{n+1}},
        v_{H}\right)
        -\chi(v_{H},e_{pH}^{\overline{n+1}})+\chi(e_{uH}^{\overline{n+1}},q_{H})=
       \end{equation*}
       \begin{equation}\label{8n}
        -\left(u_{t}^{\overline{n+1}}-\frac{u^{n+1}-\frac{u^{\overline{n+1}}+u^{n}}{2}}{\frac{\Delta t}{2}}
        ,v_{H}\right)-b(e_{uH}^{\overline{n+1}},u^{\overline{n+1}},v_{H})-b(u_{H}^{\overline{n+1}},
        e_{uH}^{\overline{n+1}},v_{H});
       \end{equation}
       \begin{equation*}
        \frac{1}{\Delta t}\left(e_{uh}^{n}-e_{uh}^{n-1},v_{h}\right)+\frac{1}{2}a\left(e_{uh}^{n+1}+e_{uh}^{n-1},v_{h}\right)
        -\chi\left(v_{h},p^{n}-\frac{p_{h}^{n+1}+p_{h}^{n-1}}{2}\right)+\frac{1}{2}\chi\left(e_{2h}^{n+1}
        +e_{2h}^{n-1},q_{h}\right)=
       \end{equation*}
       \begin{equation*}
        -\left(u_{t}^{n}-\frac{u^{n}-u^{n-1}}{\Delta t},v_{h}\right)-a\left(u^{n}-\frac{u^{n+1}
        +u^{n-1}}{2},v_{h}\right)-\frac{1}{2}\chi\left(2u^{n}-u^{n+1}-u^{n-1},q_{h}\right)-
       \end{equation*}
       \begin{equation}\label{9n}
        \frac{1}{2}\chi\left(e_{1h}^{n+1}+e_{1h}^{n-1},q_{h}\right)-b(e_{uH}^{n},u^{n},v_{h})
        -b(u_{H}^{n},e_{uH}^{n},v_{h}).
       \end{equation}

         Armed with the above tools, we are ready to state and prove some fundamental tools (Lemmas $\ref{l2}$-$\ref{l3}$) that we shall use for the proof of the main result of this paper
         (namely Theorem $\ref{t2}$).

        \begin{lemma}\label{l2}
        Let $n$ be a nonnegative integer, $n\geq1.$ Define the "predicted" consistency errors in the coarse-grid region by $\xi^{\overline{n+1}}_{H}(v_{H})=-\left(\frac{1}{2}u_{t}^{n}-\frac{u^{\overline{n+1}}-u^{n}}
        {\Delta t},v_{H}\right),$ and the consistency error by $\xi_{H}^{n+1}(v_{H})=\\
        -\left(u_{t}^{\overline{n+1}}-\frac{u^{n+1}-\frac{u^{\overline{n+1}}+u^{n}}{2}}{\frac{\Delta t}{2}},v_{H}\right),$ while the errors in the fine grid region are given by $\xi^{n}_{h}(v_{h})=\\
        -\left(u_{t}^{\overline{n}}-\frac{u^{n}-u^{n-1}}{\Delta t},v_{h}\right)-a\left(u^{n}
        -\frac{u^{n+1}+u^{n-1}}{2},v_{h}\right).$ Then it holds
       \begin{equation*}
        \frac{1}{4\Delta t}\left\{\|e_{2H}^{n+1}\|_{L^{2}_{f}}^{2}-\|e_{2H}^{n}\|_{L^{2}_{f}}^{2}-\|e_{2H}^{n+1}
        -e_{2H}^{n}\|_{L^{2}_{f}}^{2}-\frac{\Delta t^{2}}{2}(e_{2H}^{n},e_{2H,tt}^{\overline{n+1}})+4\Delta t a\left(e_{2H}^{n},e_{2H}^{n}\right)\right\}=\xi^{\overline{n+1}}_{H}(e_{2H}^{n})
       \end{equation*}
       \begin{equation*}
        -b(e_{1H}^{n},u^{n},e_{2H}^{n})-b(e_{2H}^{n},u^{n},e_{2H}^{n})-b(u_{H}^{n},e_{1H}^{n},e_{2H}^{n})
        -a\left(e_{1H}^{n},e_{2H}^{n}\right)-\chi(e_{1H}^{n},e_{pH}^{n})
       \end{equation*}
       \begin{equation}\label{19n}
        -\frac{1}{\Delta t}\left(e_{1H}^{\overline{n+1}}-e_{1H}^{n},e_{2H}^{n}\right);
       \end{equation}
       \begin{equation*}
        \frac{1}{4\Delta t}\left\{3\left(\|e_{2H}^{n+1}\|_{L^{2}_{f}}^{2}-\|e_{2H}^{n}\|_{L^{2}_{f}}^{2}\right)
        -\frac{\Delta t^{2}}{2}(e_{2H}^{n+1}-2e_{2H}^{n},e_{2H,tt}^{\overline{n+1}})+4\Delta t a\left(e_{2H}^{\overline{n+1}},e_{2H}^{\overline{n+1}}\right)\right\}=
       \end{equation*}
       \begin{equation*}
        \xi^{n+1}_{H}(e_{2H}^{\overline{n+1}})-b(e_{1H}^{\overline{n+1}},u^{\overline{n+1}},
        e_{2H}^{\overline{n+1}})-b(e_{2H}^{\overline{n+1}},u^{\overline{n+1}},e_{2H}^{\overline{n+1}})
        -b(u_{H}^{\overline{n+1}},e_{1H}^{\overline{n+1}},e_{2H}^{n+1})-\chi(e_{1H}^{\overline{n+1}},
        e_{pH}^{\overline{n+1}})
       \end{equation*}
       \begin{equation}\label{20n}
        -a\left(e_{1H}^{\overline{n+1}},e_{2H}^{\overline{n+1}}\right)-\frac{1}{\Delta t}\left(2e_{1H}^{n+1}
        -e_{1H}^{\overline{n+1}}-e_{1H}^{n},e_{2H}^{\overline{n+1}}\right);
       \end{equation}
       \begin{equation*}
        \frac{1}{4\Delta t}\left\{\|e_{2h}^{n+1}\|_{L^{2}_{f}}^{2}-\|e_{2h}^{n-1}\|_{L^{2}_{f}}^{2}
        -\Delta t^{2}(e_{2h}^{n+1}+e_{2h}^{n-1},e_{2h,tt}^{n})+\Delta t a\left(e_{2h}^{n+1}
        +e_{2h}^{n-1},e_{2h}^{n+1}+e_{2h}^{n-1}\right)\right\}=
       \end{equation*}
       \begin{equation*}
        \xi^{n}_{h}(\frac{e_{2h}^{n+1}+e_{2h}^{n-1}}{2})-b(e_{1H}^{n},u^{n},\frac{e_{2h}^{n+1}
        +e_{2h}^{n-1}}{2})-b(e_{2H}^{n},u^{n},\frac{e_{2h}^{n+1}+e_{2h}^{n-1}}{2})-b(u_{H}^{n},e_{1H}^{n},
        \frac{e_{2h}^{n+1}+e_{2h}^{n-1}}{2})
       \end{equation*}
       \begin{equation*}
        -b(u_{H}^{n},e_{2H}^{n},\frac{e_{2h}^{n+1}+e_{2h}^{n-1}}{2})-\frac{1}{4}a\left(e_{1h}^{n+1}
        +e_{1h}^{n-1},e_{2h}^{n+1}+e_{2h}^{n-1}\right)-\frac{1}{2\Delta t}\left(e_{1h}^{n}-e_{1h}^{n-1},
        e_{2h}^{n+1}+e_{2h}^{n-1}\right)
       \end{equation*}
       \begin{equation}\label{21n}
       -\frac{1}{2}\chi\left(e^{n+1}_{1h}+e^{n-1}_{1h},p^{n}-\lambda_{h}^{n}\right),
       \end{equation}
       where $\lambda_{h}^{n}=\frac{p_{h}^{n+1}+p_{h}^{n-1}}{2}.$
        \end{lemma}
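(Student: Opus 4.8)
The three claimed identities \ref{19n}, \ref{20n} and \ref{21n} are three instances of one energy-type manipulation applied, respectively, to the predictor error equation \ref{7n}, the corrector error equation \ref{8n} and the fine-grid error equation \ref{9n}. The plan is to test each error equation against the discretely divergence-free part of the velocity error and then to convert the discrete time difference into squared-norm differences. Concretely, I would write the velocity error as $e_{u\mu}^{m}=e_{1\mu}^{m}+e_{2\mu}^{m}$, the sum of a projection error $e_{1\mu}$ and a discrete error $e_{2\mu}\in V_{\mu}$, and choose the test functions $v_{H}=e_{2H}^{n}$ in \ref{7n}, $v_{H}=e_{2H}^{\overline{n+1}}$ in \ref{8n}, and $v_{h}=\tfrac{1}{2}(e_{2h}^{n+1}+e_{2h}^{n-1})$ in \ref{9n}. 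With these choices every linear, bilinear and trilinear term separates into a diagonal $e_{2}$-contribution, which stays on the left, and $e_{1}$-cross contributions, which are moved to the right-hand side.

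I would first dispose of the viscous and nonlinear terms. The term $a(e_{u\mu},v_{\mu})$ yields the diagonal piece $a(e_{2\mu},e_{2\mu})$ (kept inside the braces, multiplied by $4\Delta t$ or $\Delta t$) and the cross piece $a(e_{1\mu},e_{2\mu})$ sent to the right. Expanding $b(e_{u\mu},u,v_{\mu})+b(u_{\mu},e_{u\mu},v_{\mu})$ through the splitting produces four trilinear contributions; on the coarse grid the test function occupies the second slot, so the skew-symmetry \ref{16} kills $b(u_{H},e_{2H},e_{2H})=0$ and exactly three $b$-terms survive in \ref{19n} and \ref{20n}, whereas in the fine-grid identity the convection field is the coarse error $e_{uH}^{n}$ while the test function is the fine error, so no such cancellation occurs and all four $b$-terms persist in \ref{21n}. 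The pressure and incompressibility couplings are treated with $e_{2\mu}\in V_{\mu}$: the pairing of $e_{2\mu}$ with any discrete pressure vanishes, which eliminates the discrete pressure and leaves only the projection-error coupling, i.e. $-\chi(e_{1H}^{n},e_{pH}^{n})$ in \ref{19n}, its barred analogue in \ref{20n}, and $-\tfrac{1}{2}\chi(e_{1h}^{n+1}+e_{1h}^{n-1},p^{n}-\lambda_{h}^{n})$ in \ref{21n}.

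The heart of the argument is the time-difference term. For its $e_{2}$-part I would apply the polarization identity $(a-b,b)=\tfrac{1}{2}(\|a\|^{2}-\|b\|^{2}-\|a-b\|^{2})$ and then eliminate the predicted error through the second-order Taylor relations already used in \ref{5*}, \ref{7*} and \ref{9*}, kept now to one more order: $e_{2H}^{\overline{n+1}}=\tfrac{1}{2}(e_{2H}^{n+1}+e_{2H}^{n})-\tfrac{\Delta t^{2}}{8}e_{2H,tt}^{\overline{n+1}}+O(\Delta t^{3})$ for the Step~I equations, and the centred relation $e_{2h}^{n}=\tfrac{1}{2}(e_{2h}^{n+1}+e_{2h}^{n-1})-\tfrac{\Delta t^{2}}{2}e_{2h,tt}^{n}+O(\Delta t^{4})$ for Step~II. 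Retaining the quadratic remainder explicitly is precisely what generates the prefactor $\tfrac{1}{4\Delta t}$ together with the correction terms $-\tfrac{\Delta t^{2}}{2}(e_{2H}^{n},e_{2H,tt}^{\overline{n+1}})$, $-\tfrac{\Delta t^{2}}{2}(e_{2H}^{n+1}-2e_{2H}^{n},e_{2H,tt}^{\overline{n+1}})$ and $-\Delta t^{2}(e_{2h}^{n+1}+e_{2h}^{n-1},e_{2h,tt}^{n})$ inside the braces; the $e_{1}$-part of the time difference is left untouched and supplies the terms $-\tfrac{1}{\Delta t}(e_{1H}^{\overline{n+1}}-e_{1H}^{n},e_{2H}^{n})$, its corrector counterpart, and $-\tfrac{1}{2\Delta t}(e_{1h}^{n}-e_{1h}^{n-1},e_{2h}^{n+1}+e_{2h}^{n-1})$ on the right.

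I expect the main obstacle to be the bookkeeping of this last step: one must combine the predictor and corrector expansions with the polarization identity and keep all coefficients consistent while discarding only genuinely higher-order ($O(\Delta t^{3})$ or smaller) remainders, so that the precise combinations $e_{2H}^{n+1}-2e_{2H}^{n}$ and $e_{2h}^{n+1}+e_{2h}^{n-1}$, as well as the numerical prefactors $3$, $\tfrac{1}{4}$ and the powers of $\Delta t$, emerge exactly as stated. A secondary point requiring care is the pressure accounting, where the discrete divergence-free property of $e_{2\mu}$ must be used to ensure that it is the velocity projection error $e_{1\mu}$, rather than $e_{2\mu}$, that remains paired with the pressure error on the right-hand side.
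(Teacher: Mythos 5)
Your proposal follows essentially the same route as the paper's proof: the same splitting $e_{u\mu}^{m}=e_{1\mu}^{m}+e_{2\mu}^{m}$ with $e_{2\mu}^{m}\in V_{h}$, the same test functions $e_{2H}^{n}$, $e_{2H}^{\overline{n+1}}$ and $\tfrac{1}{2}(e_{2h}^{n+1}+e_{2h}^{n-1})$, the same use of skew-symmetry and discrete divergence-freeness to remove $b(u_{H},e_{2H},e_{2H})$ and the discrete pressure pairings, and the same Taylor relations $e_{2H}^{\overline{n+1}}=\tfrac{1}{2}(e_{2H}^{n+1}+e_{2H}^{n})-\tfrac{\Delta t^{2}}{8}e_{2H,tt}^{\overline{n+1}}+O(\Delta t^{3})$ and $e_{2h}^{n}=\tfrac{1}{2}(e_{2h}^{n+1}+e_{2h}^{n-1})-\tfrac{\Delta t^{2}}{2}e_{2h,tt}^{n}+\cdots$ combined with the polarization identity to produce the stated left-hand sides modulo the neglected $O(\Delta t^{2})$ remainder. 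The plan is correct and matches the paper's argument in all essential respects.
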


        \begin{proof}
        We decompose both "predicted" and exact error terms into
       \begin{equation}\label{10an}
        e^{m}_{u\mu}=(u^{m}-\widetilde{u}^{m})+(\widetilde{u}^{m}-u_{\mu}^{m})=e_{1\mu}^{m}+e_{2\mu}^{m},
       \end{equation}
        where $m\in\{n,\overline{n+1},n+1\}$ and $\mu=H,h.$ In addition, we assume that the terms $\widetilde{u}^{m}\in V_{h},$ so that $e_{2\mu}^{m}\in V_{h},$ for $m\in\{n,\overline{n+1},n+1\}$ and $\mu=H,h.$ Relation $e_{2H}^{m}\in V_{h}$ holds because $V_{H}$ is a subset of $V_{h}.$ Replacing the terms $e^{m}_{u\mu}$ by $e_{1\mu}^{m}+e_{2\mu}^{m}$ into relations $(\ref{7n})$-$(\ref{9n})$
       provide the following error equations
       \begin{equation*}
        \frac{1}{\Delta t}\left(e_{2H}^{\overline{n+1}}-e_{2H}^{n},v_{H}\right)+a\left(e_{2H}^{n},v_{H}\right)-
        \chi(v_{H},e_{pH}^{n})+\chi(e_{2H}^{n},q_{H})=\xi^{\overline{n+1}}_{H}(v_{H})-\chi(e_{1H}^{n},q_{H})
        -b(e_{1H}^{n},u^{n},v_{H})
       \end{equation*}
       \begin{equation}\label{10n}
        -b(e_{2H}^{n},u^{n},v_{H})-b(u_{H}^{n},e_{1H}^{n},v_{H})-b(u_{H}^{n},e_{2H}^{n},v_{H})
        -a\left(e_{1H}^{n},v_{H}\right)-\frac{1}{\Delta t}\left(e_{1H}^{\overline{n+1}}-e_{1H}^{n},v_{H}\right);
       \end{equation}
       \begin{equation*}
        \frac{1}{\Delta t}\left(2e_{2H}^{n+1}-e_{2H}^{\overline{n+1}}-e_{2H}^{n},
        v_{H}\right)+a\left(e_{2H}^{\overline{n+1}},v_{H}\right)-\chi(v_{H},e_{p_{pH}}^{\overline{n+1}})
        +\chi(e_{2H}^{\overline{n+1}},q_{H})=\xi^{n+1}_{H}(v_{H})
       \end{equation*}
       \begin{equation*}
        -b(e_{2H}^{\overline{n+1}},u^{\overline{n+1}},v_{H})-b(e_{1H}^{\overline{n+1}},u^{\overline{n+1}},v_{H})
        -b(u_{H}^{\overline{n+1}},e_{2H}^{\overline{n+1}},v_{H})-b(u_{H}^{\overline{n+1}},
        e_{1H}^{\overline{n+1}},v_{H})-a\left(e_{1H}^{\overline{n+1}},v_{H}\right)
       \end{equation*}
       \begin{equation}\label{11n}
       -\chi(e_{1H}^{\overline{n+1}},q_{H})-\frac{1}{\Delta t}\left(2e_{1H}^{n+1}-e_{1H}^{\overline{n+1}}-e_{1H}^{n},v_{H}\right);
       \end{equation}
       \begin{equation*}
        \frac{1}{\Delta t}\left(e_{2h}^{n}-e_{2h}^{n-1},v_{h}\right)+\frac{1}{2}a\left(e_{2h}^{n+1}+e_{2h}^{n-1},v_{h}\right)
        -\chi\left(v_{h},p^{n}-\frac{p_{h}^{n+1}+p_{h}^{n-1}}{2}\right)+\frac{1}{2}\chi\left(e_{uh}^{n+1}
        +e_{uh}^{n-1},q_{h}\right)=
       \end{equation*}
       \begin{equation*}
        \xi^{n}_{h}(v_{h})-b(e_{2H}^{n},u^{n},v_{h})-b(e_{1H}^{n},u^{n},v_{h})-b(u_{H}^{n},e_{2H}^{n},v_{h})
        -b(u_{H}^{n},e_{1H}^{n},v_{h})-\frac{1}{2}a\left(e_{1h}^{n+1}+e_{1h}^{n-1},v_{h}\right)
       \end{equation*}
       \begin{equation}\label{12n}
        -\frac{1}{2}\chi\left(2u^{n}-u^{n+1}-u^{n-1},q_{h}\right)-\frac{1}{\Delta t}
        \left(e_{1h}^{n}-e_{1h}^{n-1},v_{h}\right).
       \end{equation}
       Putting $\chi_{H}^{m}(v_{H},q_{H})=\chi(v_{H},e^{m}_{pH})-\chi(e_{1H}^{m},q_{H}),$ for $m\in\{\overline{n+1},n\},$ and
       $\chi_{h}^{n}(v_{h},q_{h})=\chi(v_{h},p^{n}-\frac{p_{h}^{n+1}+p_{h}^{n-1}}{2})-\frac{1}{2}
       \chi(e^{n+1}_{uh}+e^{n-1}_{uh}-(u^{n+1}+u^{n-1}-2u^{n}),q_{h}).$ Taking into account the requirement: $\chi(\widetilde{u}_{\mu},q_{\mu})=0,$ for all $\widetilde{u}_{\mu}\in V_{\mu},$ and for every
       $q_{\mu}\in Q_{\mu},$ where $\mu=h,H,$ simple calculations yield
       \begin{equation*}
        \frac{1}{\Delta t}\left(e_{2H}^{\overline{n+1}}-e_{2H}^{n},v_{H}\right)+a\left(e_{2H}^{n},v_{H}\right)
        =\xi^{\overline{n+1}}_{H}(v_{H})+\chi_{H}^{n}(v_{H},q_{H})-b(e_{1H}^{n},u^{n},v_{H})-
       \end{equation*}
       \begin{equation}\label{13n}
        b(e_{2H}^{n},u^{n},v_{H})-b(u_{H}^{n},e_{1H}^{n},v_{H})-b(u_{H}^{n},e_{2H}^{n},v_{H})
        -a\left(e_{1H}^{n},v_{H}\right)-\frac{1}{\Delta t}\left(e_{1H}^{\overline{n+1}}-e_{1H}^{n},v_{H}\right);
       \end{equation}
       \begin{equation*}
        \frac{1}{\Delta t}\left(2e_{2H}^{n+1}-e_{2H}^{\overline{n+1}}-e_{2H}^{n},v_{H}\right)+a\left(e_{2H}^{\overline{n+1}},
        v_{H}\right)=\xi^{n+1}_{H}(v_{H})+\chi_{H}^{\overline{n+1}}(v_{H},q_{H})-\frac{1}{\Delta t}\left(2e_{1H}^{n+1}-e_{1H}^{\overline{n+1}}
        -e_{1H}^{n},v_{H}\right)
       \end{equation*}
       \begin{equation}\label{14n}
        -b(e_{2H}^{\overline{n+1}},u^{\overline{n+1}},v_{H})-b(e_{1H}^{\overline{n+1}},u^{\overline{n+1}},v_{H})
        -b(u_{H}^{\overline{n+1}},e_{2H}^{\overline{n+1}},v_{H})-b(u_{H}^{\overline{n+1}},e_{1H}^{\overline{n+1}},
        v_{H})-a\left(e_{1H}^{\overline{n+1}},v_{H}\right);
       \end{equation}
       \begin{equation*}
        \frac{1}{\Delta t}\left(e_{2h}^{n}-e_{2h}^{n-1},v_{h}\right)+\frac{1}{2}a\left(e_{2h}^{n+1}+e_{2h}^{n-1},v_{h}\right)=
        \xi^{n}_{h}(v_{h})+\chi_{h}^{n}(v_{h},q_{h})-b(e_{2H}^{n},u^{n},v_{h})-b(e_{1H}^{n},u^{n},v_{h})
       \end{equation*}
       \begin{equation}\label{15n}
        -b(u_{H}^{n},e_{2H}^{n},v_{h})-b(u_{H}^{n},e_{1H}^{n},v_{h})-\frac{1}{2}a\left(e_{1h}^{n+1}
        +e_{1h}^{n-1},v_{h}\right)-\frac{1}{\Delta t}\left(e_{1h}^{n}-e_{1h}^{n-1},v_{h}\right).
       \end{equation}
       Since $e_{2\mu}^{m}\in V_{h}$ and $p\in Q_{\mu},$ for $\mu=h,H,$ and for every $m\in\{n,\overline{n+1},n+1\},$ taking $v_{H}=e_{2H}^{n}$ and $q_{H}=e_{pH}^{n},$ $v_{H}=e_{2H}^{\overline{n+1}}$ and $q_{H}=e_{pH}^{\overline{n+1}},$ $v_{h}=\frac{e_{2h}^{n+1}+e_{2h}^{n-1}}{2}$
        and $q_{h}=p^{n}-\frac{p_{h}^{n+1}+p_{h}^{n-1}}{2},$ in equations
       $(\ref{13n}),$ $(\ref{14n})$ and $(\ref{15n}),$ respectively, to get
       \begin{equation*}
        \frac{1}{\Delta t}\left(e_{2H}^{\overline{n+1}}-e_{2H}^{n},e_{2H}^{n}\right)+a\left(e_{2H}^{n},
        e_{2H}^{n}\right)=\xi^{\overline{n+1}}_{H}(e_{2H}^{n})+\chi_{H}^{n}(e_{2H}^{n},e_{pH}^{n})
        -b(e_{1H}^{n},u^{n},e_{2H}^{n})-
       \end{equation*}
       \begin{equation}\label{16n}
        b(e_{2H}^{n},u^{n},e_{2H}^{n})-b(u_{H}^{n},e_{1H}^{n},e_{2H}^{n})-a\left(e_{1H}^{n},e_{2H}^{n}\right)
        -\frac{1}{\Delta t}\left(e_{1H}^{\overline{n+1}}-e_{1H}^{n},e_{2H}^{n}\right);
       \end{equation}
       \begin{equation*}
        \frac{1}{\Delta t}\left(2e_{2H}^{n+1}-e_{2H}^{\overline{n+1}}-e_{2H}^{n},e_{2H}^{\overline{n+1}}\right)
        +a\left(e_{2H}^{\overline{n+1}},e_{2H}^{\overline{n+1}}\right)=\xi^{n+1}_{H}(e_{2H}^{\overline{n+1}})
        +\chi_{H}^{\overline{n+1}}(e_{2H}^{\overline{n+1}},e_{pH}^{\overline{n+1}})-a\left(e_{1H}^{\overline{n+1}},
        e_{2H}^{\overline{n+1}}\right)
       \end{equation*}
       \begin{equation}\label{17n}
        -b(e_{2H}^{\overline{n+1}},u^{\overline{n+1}},e_{2H}^{\overline{n+1}})-b(e_{1H}^{\overline{n+1}},
        u^{\overline{n+1}},
        e_{2H}^{\overline{n+1}})-b(u_{H}^{\overline{n+1}},e_{1H}^{\overline{n+1}},e_{2H}^{\overline{n+1}})
        -\frac{1}{\Delta t}\left(2e_{1H}^{n+1}-e_{1H}^{\overline{n+1}}-e_{1H}^{n},e_{2H}^{\overline{n+1}}\right);
       \end{equation}
       \begin{equation*}
        \frac{1}{2\Delta t}\left(e_{2h}^{n}-e_{2h}^{n-1},e_{2h}^{n+1}+e_{2h}^{n-1}\right)+\frac{1}{4}a\left(e_{2h}^{n+1}
        +e_{2h}^{n-1},e_{2h}^{n+1}+e_{2h}^{n-1}\right)=\xi^{n}_{h}(\frac{e_{2h}^{n+1}+e_{2h}^{n-1}}{2})
       \end{equation*}
       \begin{equation*}
        +\chi_{h}^{n}(\frac{e_{2h}^{n+1}+e_{2h}^{n-1}}{2},p^{n}-\frac{p_{h}^{n+1}+p_{h}^{n-1}}{2})-
        b(e_{2H}^{n},u^{n},\frac{e_{2h}^{n+1}+e_{2h}^{n-1}}{2})-b(e_{1H}^{n},u^{n},\frac{e_{2h}^{n+1}
        +e_{2h}^{n-1}}{2})
       \end{equation*}
       \begin{equation*}
        -b(u_{H}^{n},e_{1H}^{n},\frac{e_{2h}^{n+1}+e_{2h}^{n-1}}{2})-b(u_{H}^{n},e_{2H}^{n},\frac{e_{2h}^{n+1}
        +e_{2h}^{n-1}}{2})-\frac{1}{4}a\left(e_{1h}^{n+1}+e_{1h}^{n-1},e_{2h}^{n+1}+e_{2h}^{n-1}\right)
       \end{equation*}
       \begin{equation}\label{18n}
        -\frac{1}{2\Delta t}\left(e_{1h}^{n}-e_{1h}^{n-1},e_{2h}^{n+1}+e_{2h}^{n-1}\right).
       \end{equation}
       Following the MacCormack approach, the application of the Taylor series with time step
       $\frac{\Delta t}{2}$ for both "predicted" and exact values provides $\widetilde{u}^{n}=\widetilde{u}^{\overline{n+1}}-\frac{\Delta t}{2}\widetilde{u}_{t}^{\overline{n+1}}
       +\frac{\Delta t^{2}}{8}\widetilde{u}_{tt}^{\overline{n+1}}+O(\Delta t^{3})$ and $\widetilde{u}^{n+1}=\widetilde{u}^{\overline{n+1}}+
       \frac{\Delta t}{2}\widetilde{u}_{t}^{\overline{n+1}}+\frac{\Delta t^{2}}{8}
       \widetilde{u}_{tt}^{\overline{n+1}}+O(\Delta t^{3}),$ adding both expansions, it is obvious that $\widetilde{u}^{\overline{n+1}}=\frac{1}{2}(\widetilde{u}^{n+1}+\widetilde{u}^{n})-
       \frac{\Delta t^{2}}{8}\widetilde{u}_{tt}^{\overline{n+1}}+O(\Delta t^{3}).$ In way similar, $u_{2H}^{\overline{n+1}}=\frac{1}{2}(u_{2H}^{n+1}
       +u_{2H}^{n})-\frac{\Delta t^{2}}{8}u_{2H,tt}^{\overline{n+1}}+O(\Delta t^{3}),$ where $u_{2H,tt}^{\overline{n+1}}=\left(\frac{\partial^{2}u_{2H}}{\partial t^{2}}\right)^{\overline{n+1}}.$
        Utilizing this, simple calculations give
       $e_{2H}^{\overline{n+1}}=\frac{1}{2}(e_{2H}^{n+1}+e_{2H}^{n})-\frac{\Delta t^{2}}{8}e_{2H,tt}^{\overline{n+1}}+O(\Delta t^{3}).$ Likewise, applying the Taylor series with time step $\Delta t$, it is easy to see that $e_{2h}^{n}=\frac{1}{2}(e_{2h}^{n+1}+e_{2h}^{n-1})-\frac{\Delta t^{2}}{2}e_{2h,tt}^{n}+O(\Delta t^{3}).$ Substituting $e_{2H}^{\overline{n+1}}$ into relations $(\ref{16n}),$ $(\ref{17n})$ and $e_{2h}^{n}$ into equation $(\ref{18n}),$ straightforward computations result in
       \begin{equation*}
        \frac{1}{4\Delta t}\left\{\|e_{2H}^{n+1}\|_{L^{2}_{f}}^{2}-\|e_{2H}^{n}\|_{L^{2}_{f}}^{2}-\|e_{2H}^{n+1}
        -e_{2H}^{n}\|_{L^{2}_{f}}^{2}-\frac{\Delta t^{2}}{2}(e_{2H}^{n},e_{2H,tt}^{\overline{n+1}})+4\Delta t a\left(e_{2H}^{n},e_{2H}^{n}\right)\right\}= \xi^{\overline{n+1}}_{H}(e_{2H}^{n})
       \end{equation*}
       \begin{equation*}
        -b(e_{1H}^{n},u^{n},e_{2H}^{n})-b(e_{2H}^{n},u^{n},e_{2H}^{n})-b(u_{H}^{n},e_{1H}^{n},e_{2H}^{n})
        -a\left(e_{1H}^{n},e_{2H}^{n}\right)-\chi(e_{1H}^{n},e_{pH}^{n})
       \end{equation*}
       \begin{equation*}
        -\frac{1}{\Delta t}\left(e_{1H}^{\overline{n+1}}-e_{1H}^{n},e_{2H}^{n}\right)+O(\Delta t^{2});
       \end{equation*}
       \begin{equation*}
        \frac{1}{4\Delta t}\left\{3\left(\|e_{2H}^{n+1}\|_{L^{2}_{f}}^{2}-\|e_{2H}^{n}\|_{L^{2}_{f}}^{2}\right)
        -\frac{\Delta t^{2}}{2}(e_{2H}^{n+1}-2e_{2H}^{n},e_{2H,tt}^{\overline{n+1}})+4\Delta t a\left(e_{2H}^{\overline{n+1}},e_{2H}^{\overline{n+1}}\right)\right\}=
       \end{equation*}
       \begin{equation*}
        \xi^{n+1}_{H}(e_{2H}^{\overline{n+1}})-b(e_{1H}^{\overline{n+1}},u^{\overline{n+1}},
        e_{2H}^{\overline{n+1}})-b(e_{2H}^{\overline{n+1}},u^{\overline{n+1}},e_{2H}^{\overline{n+1}})
        -b(u_{H}^{\overline{n+1}},e_{1H}^{\overline{n+1}},e_{2H}^{n+1})-\chi(e_{1H}^{\overline{n+1}},
        e_{pH}^{\overline{n+1}})
       \end{equation*}
       \begin{equation*}
        -a\left(e_{1H}^{\overline{n+1}},e_{2H}^{\overline{n+1}}\right)-\frac{1}{\Delta t}\left(2e_{1H}^{n+1}
        -e_{1H}^{\overline{n+1}}-e_{1H}^{n},e_{2H}^{\overline{n+1}}\right)+O(\Delta t^{2});
       \end{equation*}
       \begin{equation*}
        \frac{1}{4\Delta t}\left\{\|e_{2h}^{n+1}\|_{L^{2}_{f}}^{2}-\|e_{2h}^{n-1}\|_{L^{2}_{f}}^{2}
        -\Delta t^{2}(e_{2h}^{n+1}+e_{2h}^{n-1},e_{2h,tt}^{n})+\Delta t a\left(e_{2h}^{n+1}+e_{2h}^{n-1},
        e_{2h}^{n+1}+e_{2h}^{n-1}\right)\right\}=
       \end{equation*}
       \begin{equation*}
        \xi^{n}_{h}(\frac{e_{2h}^{n+1}+e_{2h}^{n-1}}{2})-b(e_{1H}^{n},u^{n},\frac{e_{2h}^{n+1}
        +e_{2h}^{n-1}}{2})-b(e_{2H}^{n},u^{n},\frac{e_{2h}^{n+1}+e_{2h}^{n-1}}{2})-b(u_{H}^{n},e_{1H}^{n},
        \frac{e_{2h}^{n+1}+e_{2h}^{n-1}}{2})
       \end{equation*}
       \begin{equation*}
        -b(u_{H}^{n},e_{2H}^{n},\frac{e_{2h}^{n+1}+e_{2h}^{n-1}}{2})-\frac{1}{4}a\left(e_{1h}^{n+1}
        +e_{1h}^{n-1},e_{2h}^{n+1}+e_{2h}^{n-1}\right)-\frac{1}{2\Delta t}\left(e_{1h}^{n}
        -e_{1h}^{n-1},e_{2h}^{n+1}+e_{2h}^{n-1}\right)
       \end{equation*}
       \begin{equation*}
       -\frac{1}{2}\chi\left(e^{n+1}_{1h}+e^{n-1}_{1h},p^{n}-\lambda_{h}^{n}\right)+O(\Delta t^{2}).
       \end{equation*}
        \end{proof}
        Neglecting the infinitesimal term $O(\Delta t^{2}),$ the proof of Lemma $\ref{l2}$ is completed thanks to equalities $\chi_{h}^{n}(\frac{e_{2h}^{n+1}+e_{2h}^{n-1}}{2},p^{n}-\frac{p_{h}^{n+1}+p_{h}^{n-1}}{2})=
        \chi(\frac{e_{2h}^{n+1}+e_{2h}^{n-1}}{2},p^{n}-\frac{p_{h}^{n+1}+p_{h}^{n-1}}{2})-\frac{1}{2}\chi(e^{n+1}_{uh}+e^{n-1}_{uh}-(u^{n+1}
        +u^{n-1}-2u^{n}),p^{n}-\frac{p_{h}^{n+1} +p_{h}^{n-1}}{2})=-\frac{1}{2}\chi(e_{1h}^{n+1}+e_{1h}^{n-1},p^{n}
        -\frac{p_{h}^{n+1}+p_{h}^{n-1}}{2}).$ In fact, $u^{m}\in F,$ $p^{m}$ and $p_{h}^{m}\in Q_{h}\subset Q,$ for $m=n-1,n,n+1,$ imply $\chi(u^{n+1}+u^{n-1}-2u^{n},p^{n}-\frac{p_{h}^{n+1}+p_{h}^{n-1}}{2})=0,$ where $F$ and $Q$ are given by relations $(\ref{14})$ and $(\ref{5}),$ respectively.\\

        \begin{remark}
        In general, the time step $\Delta t$ is small, so the truncation of the term $O(\Delta t^{2})$ does not compromise the result of this work.
        \end{remark}

         \begin{lemma}\label{l3} Let $f\in L^{2}(0,T;X)$ and $u_{0}\in D(A)\cap F$ be given. Consider the two-level hybrid algorithm $(\ref{36})$ and $(\ref{38}).$ Without time step restriction, the following estimate holds over $0\leq t^{n}<+\infty.$ More specifically, for all $N\in\mathbb{N},$ $N\neq0,$ we have
        \begin{equation*}
         \|e_{uH}^{N}\|_{X}^{2}+\frac{\Delta t}{6}\overset{N-1}{\underset{n=0}\sum}\left(\|e_{uH}^{\overline{n+1}}\|_{W}^{2}
         +\|e_{uH}^{n}\|_{W}^{2}\right) \leq \widetilde{C}\left\{H^{2m}\left(H^{2}\||u|\|_{L^{\infty}(0,T;W^{m+1})}^{2}+
         \||u|\|_{L^{2}(0,T;W^{m+1})}^{2}+\right.\right.
        \end{equation*}
        \begin{equation}\label{43}
        \left.\left.H^{2}\||u_{t}|\|_{L^{2}(0,T;W^{m+1})}^{2}\right)+\Delta t^{2}\left(\||u_{tt}|\|_{L^{2}(0,T;W^{m+1})}^{2}+\Delta t^{2}
        \||u_{ttt}|\|_{L^{2}(0,T;W^{m+1})}^{2}\right)+\||\nabla u|\|_{L^{2}(0,T;W^{m+1})}^{2}\right\},
        \end{equation}
        for all $m\in\mathbb{N},$ where $\widetilde{C}$ is a positive constant.
        \end{lemma}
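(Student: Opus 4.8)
The plan is to run a discrete energy argument on the two coarse-grid error equations \pref{19n} and \pref{20n} furnished by Lemma \ref{l2}, and then to pass from the projected error $e_{2H}$ to the full error $e_{uH}$ via the triangle inequality and the interpolation bounds \pref{80b}--\pref{82b}. First I would add \pref{19n} and \pref{20n}, in exactly the way \pref{11*} and \pref{12*} were combined into \pref{13*} in the proof of Lemma \ref{l1}. On the left this produces the telescoping quantity $\frac{3}{4\Delta t}(\|e_{2H}^{n+1}\|_{X}^{2}-\|e_{2H}^{n}\|_{X}^{2})$ together with the viscous dissipation $a(e_{2H}^{n},e_{2H}^{n})+a(e_{2H}^{\overline{n+1}},e_{2H}^{\overline{n+1}})=\|e_{2H}^{n}\|_{W}^{2}+\|e_{2H}^{\overline{n+1}}\|_{W}^{2}$ (by \pref{8} and \pref{12}), plus the two $O(\Delta t^{2})$ remainder pairings against $e_{2H,tt}^{\overline{n+1}}$, which I move to the right-hand side.

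The bulk of the work is estimating the right-hand side term by term, arranging that every contribution carrying a factor $\|e_{2H}\|_{W}$ is absorbed by the left-hand dissipation, leaving only an $\|e_{2H}\|_{X}^{2}$ term for Gr\"onwall and data terms of the advertised orders. Concretely: (i) the consistency functionals $\xi_{H}^{\overline{n+1}}(e_{2H}^{n})$ and $\xi_{H}^{n+1}(e_{2H}^{\overline{n+1}})$ are Taylor-expanded about $t^{\overline{n+1}}$ with step $\frac{\Delta t}{2}$, as in \pref{5*}--\pref{9*}, exposing them as pairings of $O(\Delta t^{2})$ remainders in $u_{tt},u_{ttt}$ against $e_{2H}$; Cauchy--Schwarz, Young and the Poincar\'e inequality \pref{34} then yield the $\Delta t^{2}\||u_{tt}|\|^{2}$ and $\Delta t^{4}\||u_{ttt}|\|^{2}$ contributions. (ii) The linear couplings $a(e_{1H},e_{2H})$ are split by Young, keeping $\|e_{1H}\|_{W}^{2}$, which \pref{81b} bounds by $CH^{2m}\|u\|_{W^{m+1}}^{2}$. (iii) The difference-quotient terms such as $\frac{1}{\Delta t}(e_{1H}^{\overline{n+1}}-e_{1H}^{n},e_{2H}^{n})$ are rewritten as pairings against a time increment of $u-\widetilde{u}$; bounding $\frac{1}{\Delta t}\|e_{1H}^{\overline{n+1}}-e_{1H}^{n}\|_{X}$ by a mean value of $(u-\widetilde{u})_{t}$ and applying \pref{80b} to $u_{t}$ produces the $H^{2m+2}\||u_{t}|\|^{2}$ term. (iv) The pressure couplings $\chi(e_{1H},e_{pH})$ are handled by inserting the best $Q_{H}$-approximation of $p$ and using \pref{82b} together with the discrete $\inf$-$\sup$ condition \pref{25} to control the discrete pressure error, feeding the $\||\nabla u|\|_{L^{2}(0,T;W^{m+1})}^{2}$ term.

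The genuinely delicate step is the nonlinear trilinear block, above all $b(e_{2H}^{n},u^{n},e_{2H}^{n})$, which carries the projected error in both outer slots and so enjoys no cancellation from the skew-symmetry \pref{16}. Here I would apply the sharp continuity estimate \pref{17} to write $|b(e_{2H}^{n},u^{n},e_{2H}^{n})|\le\alpha\|e_{2H}^{n}\|_{X}\|e_{2H}^{n}\|_{W}\|u^{n}\|_{W}+\dots$, and similarly for $b(e_{1H}^{n},u^{n},e_{2H}^{n})$ and $b(u_{H}^{n},e_{1H}^{n},e_{2H}^{n})$; the factor $\|u_{H}^{n}\|_{W}$ is controlled uniformly in $n$ by the a priori bound of Lemma \ref{l1}, and $\|u^{n}\|_{W}$ by the regularity \pref{86b}. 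The obstacle is to pick the Young weights so that simultaneously all $\|e_{2H}\|_{W}^{2}$ parts are absorbed while the stated fraction $\frac{\Delta t}{6}$ of the dissipation is retained, and the surviving $\|e_{2H}\|_{X}^{2}$ coefficient stays independent of $\Delta t$ and $N$. The coercivity of the dissipation supplied by the Poincar\'e inequality \pref{34} is what makes this possible and delivers the unconditional, time-step-restriction-free estimate valid for all $t^{n}$.

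Finally I would multiply through by $4\Delta t$ and sum from $n=0$ to $N-1$: the norm differences telescope to $\|e_{2H}^{N}\|_{X}^{2}$ with the $n=0$ boundary term controlled by the initial projection \pref{39}, and the discrete Gr\"onwall lemma absorbs the residual $\Delta t\sum\|e_{2H}^{n}\|_{X}^{2}$. Closing with $\|e_{uH}\|_{X}\le\|e_{1H}\|_{X}+\|e_{2H}\|_{X}$ and $\|e_{uH}\|_{W}\le\|e_{1H}\|_{W}+\|e_{2H}\|_{W}$, together with \pref{80b}--\pref{81b} for the interpolation errors $e_{1H}$, then recovers the full-error bound \pref{43}.
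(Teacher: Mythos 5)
Your overall skeleton (add \pref{19n} and \pref{20n}, estimate the right-hand side term by term with Cauchy--Schwarz, Young, \pref{17} and \pref{34}, multiply by $4\Delta t$, sum, and pass from $e_{2H}$ to $e_{uH}$ via \pref{80b}--\pref{81b}) matches the paper's. But the step you yourself identify as ``genuinely delicate'' is where the proposal breaks down. Applying \pref{17} to the self-interaction term gives
\begin{equation*}
|b(e_{2H}^{n},u^{n},e_{2H}^{n})|\leq \alpha\|e_{2H}^{n}\|_{X}\|e_{2H}^{n}\|_{W}\|u^{n}\|_{W}
+\alpha\|u^{n}\|_{X}^{1/2}\|u^{n}\|_{W}^{1/2}\|e_{2H}^{n}\|_{W}^{2},
\end{equation*}
and the second summand already carries the full dissipation power $\|e_{2H}^{n}\|_{W}^{2}$ with an $O(1)$ coefficient depending on $u$; no choice of Young weights can absorb it into the retained $\frac{\Delta t}{6}$ fraction of the left-hand dissipation unless $u$ is small, and no $\|e_{2H}\|_{X}^{2}$ remainder is produced for Gr\"onwall to act on. The same issue arises for $b(u_{H}^{n},e_{1H}^{n},e_{2H}^{n})$ where $\|u_{H}^{n}\|_{W}$ is only controlled in a summed ($\ell^{2}$ in $n$), not uniform, sense by Lemma \ref{l1}. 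The paper does not attempt this absorption: it bounds these terms crudely (see \pref{25n}--\pref{26n}), deliberately accepting a quartic remainder $\|e_{2H}\|_{W}^{4}$ and products of the form $\|u\|_{W}^{2}\|e_{2H}\|_{W}^{2}$ on the right, carries them through the summation as $\||\nabla e_{2H}|\|_{L^{\infty}(0,T;W)}^{2}\||\nabla e_{2H}|\|_{L^{2}(0,T;W)}^{2}$ etc., and disposes of every $e_{2H}$-dependent right-hand term only at the very end by taking the infimum over $\widetilde{u}\in W_{h}$. Relatedly, the paper never invokes a discrete Gr\"onwall lemma anywhere in this proof, and your Gr\"onwall factor $\exp(C\Delta t\sum_{n}\|u^{n}\|_{W}^{2})$ would in any case not be uniform on $0\leq t^{n}<+\infty$ without an extra integrability hypothesis, contradicting the ``without time step restriction over $[0,\infty)$'' claim of the lemma.

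Two smaller divergences: the pressure coupling needs no $\inf$-$\sup$ argument here, since the paper shows $\chi_{H}^{m}(e_{2H}^{m},e_{pH}^{m})=0$ exactly (both $\chi(e_{2H}^{m},e_{pH}^{m})$ and $\chi(e_{1H}^{m},e_{pH}^{m})$ vanish because $e_{2H}^{m}\in V_{h}$, $u^{m}\in F$ and $\widetilde{u}^{m}\in V_{h}$); your route through \pref{25} and \pref{82b} is not what feeds the $\||\nabla u|\|_{L^{2}(0,T;W^{m+1})}^{2}$ term. And the $n=0$ boundary term is not handled via the initial projection \pref{39} but is bounded by $\||e_{2H}|\|_{L^{\infty}(0,T;W)}^{2}$ and removed by the same final infimum. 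So the proposal is not a faithful reconstruction, and as written the absorption claim at its core does not go through.
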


        \begin{proof}
        First, using the expansion $e_{2H}^{\overline{n+1}}=\frac{1}{2}(e_{2H}^{n+1}+e_{2H}^{n})
        -\frac{\Delta t^{2}}{8}e_{2H,tt}^{\overline{n+1}}+O(\Delta t^{3}),$ we have
        \begin{equation*}
            \left(e_{2H}^{\overline{n+1}},e_{2H,tt}^{\overline{n+1}}\right)=\left(\frac{1}{2}(e_{2H}^{n+1}
        +e_{2H}^{n})-\frac{\Delta t^{2}}{8}e_{2H,tt}^{\overline{n+1}}+O(\Delta t^{3}),
        e_{2H,tt}^{\overline{n+1}}\right)=\frac{1}{2}\left(e_{2H}^{n+1}+e_{2H}^{n},e_{2H,tt}^{\overline{n+1}}\right)
        \end{equation*}
        \begin{equation*}
            -\frac{\Delta t^{2}}{8}\|e_{2H,tt}^{\overline{n+1}}\|_{L^{2}_{f}}^{2}+O(\Delta t^{3}).
        \end{equation*}
        From this, we get
        \begin{equation*}
        \frac{\Delta t^{2}}{2}\left(e_{2H}^{n+1}-e_{2H}^{n},e_{2H,tt}^{\overline{n+1}}\right)=
        \Delta t^{2}\left(\frac{e_{2H}^{n+1}+e_{2H}^{n}}{2}-e_{2H}^{n},e_{2H,tt}^{\overline{n+1}}\right)=
        -\Delta t^{2}\left(e_{2H}^{n},e_{2H,tt}^{\overline{n+1}}\right)+\Delta t^{2}
        \left(e_{2H}^{\overline{n+1}},e_{2H,tt}^{\overline{n+1}}\right)+O(\Delta t^{3}).
        \end{equation*}

        This fact together with equations $(\ref{19n})$ and $(\ref{20n})$ yield
       \begin{equation*}
        \frac{1}{4\Delta t}\left\{4\left(\|e_{2H}^{n+1}\|_{L^{2}_{f}}^{2}-\|e_{2H}^{n}\|_{L^{2}_{f}}^{2}\right)
        -\|e_{2H}^{n+1}-e_{2H}^{n}\|_{L^{2}_{f}}^{2}-\Delta t^{2}\left[\left(e_{2H}^{\overline{n+1}},
        e_{2H,tt}^{\overline{n+1}}\right)-\left(e_{2H}^{n},e_{2H,tt}^{\overline{n+1}}
        \right)\right]\right\}+a\left(e_{2H}^{\overline{n+1}},e_{2H}^{\overline{n+1}}\right)+
       \end{equation*}
       \begin{equation*}
        +a\left(e_{2H}^{n},e_{2H}^{n}\right)=\xi^{\overline{n+1}}_{H}(e_{2H}^{n})
        +\xi^{n+1}_{H}(e_{2H}^{\overline{n+1}})+\chi_{H}^{n}(e_{2H}^{n},e_{pH}^{n})+
        \chi_{H}^{\overline{n+1}}(e_{2H}^{\overline{n+1}},e_{pH}^{\overline{n+1}})-b(e_{1H}^{n},u^{n},e_{2H}^{n})
       \end{equation*}
       \begin{equation*}
        -b(e_{1H}^{\overline{n+1}},u^{\overline{n+1}},e_{2H}^{\overline{n+1}})-b(e_{2H}^{\overline{n+1}},
        u^{\overline{n+1}},e_{2H}^{\overline{n+1}})-b(u_{H}^{\overline{n+1}},e_{1H}^{\overline{n+1}},
        e_{2H}^{\overline{n+1}})-b(e_{2H}^{n},u^{n},e_{2H}^{n})-b(u_{H}^{n},e_{1H}^{n},e_{2H}^{n})
       \end{equation*}
       \begin{equation}\label{22n}
        -a\left(e_{1H}^{\overline{n+1}},e_{2H}^{\overline{n+1}}\right)-a\left(e_{1H}^{n},e_{2H}^{n}\right)
        -\frac{1}{\Delta t}\left(2e_{1H}^{n+1}-e_{1H}^{\overline{n+1}}-e_{1H}^{n},e_{2H}^{n+1}\right)-
        \frac{1}{\Delta t}\left(e_{1H}^{\overline{n+1}}-e_{1H}^{n},e_{2H}^{\overline{n+1}}\right).
       \end{equation}
       Since $\chi_{H}^{m}(e_{2H}^{m},e_{pH}^{m})=\chi(e_{2H}^{m},e^{m}_{pH})-\chi(e_{1H}^{m},e_{pH}^{m}),$ for
       $m\in\{\overline{n+1},n\},$ it is obvious that $\chi_{H}^{m}(e_{2H}^{m},e_{pH}^{m})=0.$
       Indeed, because $e_{pH}^{m}\in Q_{H}\subset Q_{h}\subset Q,$ $\chi(e_{2H}^{m},e^{m}_{pH})=0$ and
       $e_{1H}^{m}=u^{m}-\widetilde{u}^{m},$ with $u^{m}\in F$ ($u$ is the exact solution) and $\widetilde{u}^{m}\in V_{h}$ imply $\chi(e_{1H}^{m},e^{m}_{pH})=0,$ where $F$ and $Q$ are given by relations $(\ref{14})$ and $(\ref{5}),$ respectively. Using this, equation $(\ref{22n})$ becomes
       \begin{equation*}
        \frac{1}{4\Delta t}\left\{4\left(\|e_{2H}^{n+1}\|_{L^{2}_{f}}^{2}-\|e_{2H}^{n}\|_{L^{2}_{f}}^{2}\right)
        -\Delta t^{2}\left[\left(e_{2H}^{\overline{n+1}},e_{2H,tt}^{\overline{n+1}}\right)-\left(e_{2H}^{n},e_{2H,tt}^{\overline{n+1}}
        \right)\right]\right\}+a\left(e_{2H}^{\overline{n+1}},e_{2H}^{\overline{n+1}}\right)+a\left(e_{2H}^{n},e_{2H}^{n}\right)=
       \end{equation*}
       \begin{equation*}
        \frac{1}{4\Delta t}\|e_{2H}^{n+1}-e_{2H}^{n}\|_{L^{2}_{f}}^{2}+\xi^{\overline{n+1}}_{H}(e_{2H}^{n})+\xi^{n+1}_{H}(e_{2H}^{\overline{n+1}})
        -a\left(e_{1H}^{n},e_{2H}^{n}\right)-a\left(e_{1H}^{\overline{n+1}},e_{2H}^{\overline{n+1}}\right)-b(e_{1H}^{n},u^{n},e_{2H}^{n})
       \end{equation*}
       \begin{equation*}
       -b(e_{1H}^{\overline{n+1}},u^{\overline{n+1}},e_{2H}^{\overline{n+1}})
        -b(e_{2H}^{\overline{n+1}},u^{\overline{n+1}},e_{2H}^{\overline{n+1}})-b(u_{H}^{\overline{n+1}},e_{1H}^{\overline{n+1}},
        e_{2H}^{\overline{n+1}})-b(e_{2H}^{n},u^{n},e_{2H}^{n})-b(u_{H}^{n},e_{1H}^{n},e_{2H}^{n})
       \end{equation*}
       \begin{equation}\label{23n}
        -\frac{1}{\Delta t}
        \left(2e_{1H}^{n+1}-e_{1H}^{\overline{n+1}}-e_{1H}^{n},e_{2H}^{n+1}\right)-\frac{1}{\Delta t}
        \left(e_{1H}^{\overline{n+1}}-e_{1H}^{n},e_{2H}^{\overline{n+1}}\right).
       \end{equation}
       Applying the Cauchy-Schwarz and Young inequalities along with the Poincar\'{e} inequality (i.e., estimate $(\ref{34})$), we bound the terms $b(e_{2H}^{\overline{n+1}},u^{\overline{n+1}},e_{2H}^{\overline{n+1}})$ and $b(e_{2H}^{n},u^{n},e_{2H}^{n}),$ and using this, we bound each term of the RHS of estimate $(\ref{23n}).$ Combining estimates $(\ref{17})$ and $(\ref{34}),$ simple calculations provide
       \begin{equation*}
        b(e_{2H}^{\overline{n+1}},u^{\overline{n+1}},e_{2H}^{\overline{n+1}})\leq \alpha\|e_{2H}^{\overline{n+1}}\|_{L_{f}^{2}}^{\frac{1}{2}}
        \|e_{2H}^{\overline{n+1}}\|_{W}^{\frac{1}{2}}\|u^{\overline{n+1}}\|_{W}\|e_{2H}^{\overline{n+1}}\|_{L_{f}^{2}}^{\frac{1}{2}}
        \|e_{2H}^{\overline{n+1}}\|_{W}^{\frac{1}{2}}+\alpha\|e_{2H}^{\overline{n+1}}\|_{W}\|u^{\overline{n+1}}\|_{L_{f}^{2}}^{\frac{1}{2}}
        \|u^{\overline{n+1}}\|_{W}^{\frac{1}{2}}
       \end{equation*}
       \begin{equation}\label{25n}
        \|e_{2H}^{\overline{n+1}}\|_{L_{f}^{2}}^{\frac{1}{2}}\|e_{2H}^{\overline{n+1}}\|_{W}^{\frac{1}{2}}\leq
        2\alpha(\lambda_{1}\nu)^{\frac{-1}{2}}\|e_{2H}^{\overline{n+1}}\|_{W}^{2}\|u^{\overline{n+1}}\|_{W}\leq \frac{\alpha^{2}}{\lambda_{1}\nu}\|u^{\overline{n+1}}\|_{W}^{2}+\|e_{2H}^{\overline{n+1}}\|_{W}^{4}.
       \end{equation}
        Similarly,
        \begin{equation}\label{26n}
         b(e_{2H}^{n},u^{n},e_{2H}^{n})\leq \frac{\alpha^{2}}{\lambda_{1}\nu}\|u^{n}\|_{W}^{2}+\|e_{2H}^{n}\|_{W}^{4}.
        \end{equation}
        On the other hand,
       \begin{equation*}
       \bullet\text{\,\,} \xi^{\overline{n+1}}(e_{2H}^{n})=-\left(\frac{1}{2}u_{t}^{n}-\frac{u^{\overline{n+1}}-u^{n}}
       {\Delta t}, e_{2H}^{n}\right)\leq\frac{1}{\sqrt{\lambda_{1}\nu}}\left\|\frac{1}{2}u_{t}^{n}-\frac{u^{\overline{n+1}}-u^{n}}
       {\Delta t}\right\|_{L_{f}^{2}}\|e_{2H}^{n}\|_{W}\leq \frac{1}{6}\|e_{2H}^{n}\|_{W}^{2}+
       \end{equation*}
       \begin{equation}\label{27n}
        \frac{3}{2\lambda_{1}\nu}\left\|\frac{1}{2}u_{t}^{n}-\frac{u^{\overline{n+1}}-u^{n}}{\Delta t}\right\|_{L_{f}^{2}}^{2};
       \end{equation}
       \begin{equation*}
        b(e_{1H}^{\overline{n+1}},u^{\overline{n+1}},e_{2H}^{\overline{n+1}})\leq\alpha\|e_{1H}^{\overline{n+1}}\|_{L_{f}^{2}}^{\frac{1}{2}}
        \|e_{1H}^{\overline{n+1}}\|_{W}^{\frac{1}{2}}\|u^{\overline{n+1}}\|_{W}\|e_{2H}^{\overline{n+1}}\|_{L_{f}^{2}}^{\frac{1}{2}}
        \|e_{2H}^{\overline{n+1}}\|_{W}^{\frac{1}{2}}+\alpha\|e_{1H}^{\overline{n+1}}\|_{W}\|u^{\overline{n+1}}\|_{L_{f}^{2}}^{\frac{1}{2}}
        \|u^{\overline{n+1}}\|_{W}^{\frac{1}{2}}
       \end{equation*}
       \begin{equation}\label{28n}
        \|e_{2H}^{\overline{n+1}}\|_{L_{f}^{2}}^{\frac{1}{2}}\|e_{2H}^{\overline{n+1}}\|_{W}^{\frac{1}{2}}\leq
        \frac{2\alpha}{\sqrt{\lambda_{1}\nu}}\|e_{1H}^{\overline{n+1}}\|_{W}\|e_{2H}^{\overline{n+1}}\|_{W}\|u^{\overline{n+1}}\|_{W}\leq \frac{\alpha^{2}}{\lambda_{1}\nu}\|u^{\overline{n+1}}\|_{W}^{2}\|e_{2H}^{\overline{n+1}}\|_{W}^{2}+
        \|e_{1H}^{\overline{n+1}}\|_{W}^{2}.
       \end{equation}
       Likewise
       \begin{equation}\label{29n}
        b(u_{H}^{\overline{n+1}},e_{1H}^{\overline{n+1}},e_{2H}^{\overline{n+1}})\leq \frac{\alpha^{2}}{\lambda_{1}\nu}\|u_{H}^{\overline{n+1}}\|_{W}^{2}\|e_{2H}^{\overline{n+1}}\|_{W}^{2}+
        \|e_{1H}^{\overline{n+1}}\|_{W}^{2}.
       \end{equation}
        The application of the Cauchy-Schwarz inequality to equation $(\ref{12})$ provides
        \begin{equation}\label{30n}
        a\left(e_{1H}^{\overline{n+1}},e_{2H}^{\overline{n+1}}\right)\leq \frac{3}{2}\left\|e_{1H}^{\overline{n+1}}\right\|_{W}^{2}+
        \frac{1}{6}\left\|e_{2H}^{\overline{n+1}}\right\|_{W}^{2}.
       \end{equation}
       Furthermore, utilizing the Cauchy-Schwarz inequality and estimate $(\ref{34}),$ it holds
       \begin{equation*}
        \frac{1}{\Delta t}\left(e_{1H}^{\overline{n+1}}-e_{1H}^{n},e_{2H}^{n}\right)\leq \frac{1}{\Delta t}
        \left\|e_{1H}^{\overline{n+1}}-e_{1H}^{n}\right\|_{L^{2}_{f}}\left\|e_{2H}^{n}\right\|_{L^{2}_{f}}\leq\frac{1}
        {\Delta t\sqrt{\lambda_{1}\nu}}\left\|e_{1H}^{\overline{n+1}}-e_{1H}^{n}\right\|_{L^{2}_{f}}\left\|e_{2H}^{n}\right\|_{W}\leq
       \end{equation*}
       \begin{equation}\label{31n}
        \frac{1}{6}\left\|e_{2H}^{n}\right\|_{W}^{2}+\frac{3}{2\lambda_{1}\nu\Delta t^{2}}
        \left\|e_{1H}^{\overline{n+1}}-e_{1H}^{n}\right\|_{L^{2}_{f}}^{2}.
       \end{equation}
       In way similar
       \begin{equation}\label{32n}
        \xi_{H}^{n+1}(e_{2H}^{\overline{n+1}})\leq \frac{3}{2\lambda_{1}\nu}\left\|u_{t}^{\overline{n+1}}-\frac{u^{n+1}-
        \frac{u^{\overline{n+1}}+u^{n}}{2}}{\frac{\Delta t}{2}}\right\|_{L^{2}_{f}}^{2}+\frac{1}{6}\left\|e_{2H}^{\overline{n+1}}\right\|_{W}^{2};
       \end{equation}
       \begin{equation}\label{33n}
        \Delta t^{2}\left(e_{2H}^{\overline{n+1}},e_{2H,tt}^{\overline{n+1}}\right)\leq
        \frac{3\Delta t^{4}}{2\lambda_{1}\nu}\left\|e_{2H,tt}^{\overline{n+1}}\right\|_{L^{2}_{f}}^{2}+
        \frac{1}{6}\left\|e_{2H}^{\overline{n+1}}\right\|_{W}^{2};
       \end{equation}
       \begin{equation}\label{33nn}
        \Delta t^{2}\left(e_{2H}^{n},e_{2H,tt}^{\overline{n+1}}\right)\leq\frac{3\Delta t^{4}}{2\lambda_{1}\nu}
        \left\|e_{2H,tt}^{\overline{n+1}}\right\|_{L^{2}_{f}}^{2}+\frac{1}{6}\left\|e_{2H}^{n}\right\|_{W}^{2};
       \end{equation}
       \begin{equation}\label{34n}
        b(e_{1H}^{n},u^{n},e_{2H}^{n})\leq \frac{\alpha^{2}}{\lambda_{1}\nu}\|u^{n}\|_{W}^{2}\|e_{2H}^{n}\|_{W}^{2}+
        \|e_{1H}^{n}\|_{W}^{2};
       \end{equation}
       \begin{equation}\label{35n}
        a\left(e_{1H}^{n},e_{2H}^{n}\right)\leq \frac{3}{2}\left\|e_{1H}^{n}\right\|_{W}^{2}+
        \frac{1}{6}\left\|e_{2H}^{n}\right\|_{W}^{2};
       \end{equation}
       \begin{equation}\label{36n}
        b(u_{H}^{n},e_{1H}^{n},e_{2H}^{n})\leq \frac{\alpha^{2}}{\lambda_{1}\nu}\|u_{H}^{n}\|_{W}^{2}\|e_{2H}^{n}\|_{W}^{2}
        +\|e_{1H}^{n}\|_{W}^{2};
       \end{equation}
       \begin{equation}\label{37n}
        \frac{1}{\Delta t}\left(2e_{1H}^{n+1}-e_{1H}^{\overline{n+1}}-e_{1H}^{n},e_{2H}^{\overline{n+1}}\right)\leq
        \frac{1}{6}\left\|e_{2H}^{\overline{n+1}}\right\|_{W}^{2}+\frac{3}{2\lambda_{1}\nu\Delta t^{2}}
        \left\|2e_{1H}^{n+1}-e_{1H}^{\overline{n+1}}-e_{1H}^{n}\right\|_{L^{2}_{f}}^{2}.
       \end{equation}
       Replacing the bounds given by estimates $(\ref{25n})$-$(\ref{37n})$ into estimate $(\ref{23n}),$ grouping the remaining
       terms and multiplying both sides of the final inequality by $4\Delta t$ to obtain
       \begin{equation*}
        4\left(\left\|e_{2H}^{n+1}\right\|_{L^{2}_{f}}^{2}-\left\|e_{2H}^{n}\right\|_{L^{2}_{f}}^{2}\right)
        +\frac{4}{3}\Delta t\left(\|e_{2H}^{\overline{n+1}}\|_{W}^{2}+\|e_{2H}^{n}\|_{W}^{2}\right)\leq
        \left\|e_{2H}^{n+1}-e_{2H}^{n}\right\|_{L^{2}_{f}}^{2}+\frac{3\Delta t^{4}}{\lambda_{1}\nu}
        \left\|e_{2H,tt}^{\overline{n+1}}\right\|_{L^{2}_{f}}^{2}+
       \end{equation*}
       \begin{equation*}
        \frac{6\Delta t}{\lambda_{1}\nu}\left\{\left\|\frac{1}{2}u_{t}^{n}-\frac{u^{\overline{n+1}}-u^{n}}
        {\Delta t}\right\|_{L^{2}_{f}}^{2}+\left\|u_{t}^{\overline{n+1}}-\frac{u^{n+1}-\frac{u^{\overline{n+1}}
        +u^{n}}{2}}{\frac{\Delta t}{2}}\right\|_{L^{2}_{f}}^{2}+\frac{7\lambda_{1}\nu}{3}\left(\|e_{1H}^{\overline{n+1}}\|_{W}^{2}
        +\|e_{1H}^{n}\|_{W}^{2}\right)\right.
       \end{equation*}
       \begin{equation*}
        +\frac{2\alpha^{2}}{3}\left\{(\|u^{\overline{n+1}}\|_{W}^{2}+\|u_{H}^{\overline{n+1}}\|_{W}^{2})
        \|e_{2H}^{\overline{n+1}}\|_{W}^{2}+(\|u^{n}\|_{W}^{2}+\|u_{H}^{n}\|_{W}^{2})\|e_{2H}^{n}\|_{W}^{2}+
        \|u^{\overline{n+1}}\|_{W}^{2}+\|u^{n}\|_{W}^{2}\right\}+
       \end{equation*}
       \begin{equation}\label{38n}
       \left.\frac{2\lambda_{1}\nu}{3}\left(\|e_{2H}^{\overline{n+1}}\|_{W}^{4}+\|e_{2H}^{n}\|_{W}^{4}\right)\right\}
        +\frac{6}{\lambda_{1}\nu\Delta t}\left\{\left\|e_{1H}^{\overline{n+1}}-e_{1H}^{n}\right\|_{L^{2}_{f}}^{2}+
       \left\|2e_{1}^{n+1}-e_{1H}^{\overline{n+1}}-e_{1}^{n}\right\|_{L^{2}_{f}}^{2}\right\}.
       \end{equation}
       Summing relation $(\ref{38n})$ up from $n=0,1,...,N-1,$ results in
        \begin{equation*}
        4\left\|e_{2H}^{N}\right\|_{L^{2}_{f}}^{2}+\frac{4}{3}\Delta t\underset{n=0}{\overset{N-1}\sum}
        \left(\|e_{2H}^{\overline{n+1}}\|_{W}^{2}+\|e_{2H}^{n}\|_{W}^{2}\right)\leq 4\left\|e_{2H}^{0}\right\|_{L^{2}_{f}}^{2}+
       \underset{n=0}{\overset{N-1}\sum}\left\|e_{2H}^{n+1}-e_{2H}^{n}\right\|_{L^{2}_{f}}^{2}+
        \frac{3\Delta t^{4}}{\lambda_{1}\nu}\underset{n=0}{\overset{N-1}\sum}\left\|e_{2H,tt}^{\overline{n+1}}\right\|_{L^{2}_{f}}^{2}+
       \end{equation*}
       \begin{equation*}
        \frac{6\Delta t}{\lambda_{1}\nu}\underset{n=0}{\overset{N-1}\sum}\left\{\left\|\frac{1}{2}u_{t}^{n}-\frac{u^{\overline{n+1}}-u^{n}}
        {\Delta t}\right\|_{L^{2}_{f}}^{2}+\left\|u_{t}^{\overline{n+1}}-\frac{u^{n+1}-\frac{u^{\overline{n+1}}
        +u^{n}}{2}}{\frac{\Delta t}{2}}\right\|_{L^{2}_{f}}^{2}+\frac{7\lambda_{1}\nu}{3}\left(\|e_{1H}^{\overline{n+1}}\|_{W}^{2}
        +\|e_{1H}^{n}\|_{W}^{2}\right)+\right.
       \end{equation*}
       \begin{equation*}
        \frac{2\alpha^{2}}{3}\left\{(\|u^{\overline{n+1}}\|_{W}^{2}+\|u_{H}^{\overline{n+1}}\|_{W}^{2})
        \|e_{2H}^{\overline{n+1}}\|_{W}^{2}+(\|u^{n}\|_{W}^{2}+\|u_{H}^{n}\|_{W}^{2})\|e_{2H}^{n}\|_{W}^{2}+
        \|u^{\overline{n+1}}\|_{W}^{2}+\|u^{n}\|_{W}^{2}\right\}+
       \end{equation*}
       \begin{equation}\label{39n}
       \left.\frac{2\lambda_{1}\nu}{3}\left(\|e_{2H}^{\overline{n+1}}\|_{W}^{4}+\|e_{2H}^{n}\|_{W}^{4}\right)\right\}+
        \frac{6}{\lambda_{1}\nu\Delta t}\underset{n=0}{\overset{N-1}\sum}\left\{\left\|e_{1H}^{\overline{n+1}}
        -e_{1H}^{n}\right\|_{L^{2}_{f}}^{2}+\left\|2e_{1}^{n+1}-e_{1H}^{\overline{n+1}}-e_{1}^{n}\right\|_{L^{2}_{f}}^{2}\right\}.
       \end{equation}
       Using the H\"{o}lder inequality together with equations $(\ref{85b}),$ each term of the RHS of estimate $(\ref{39n})$ can be bounded as follows
      \begin{equation*}
        \underset{n=0}{\overset{N-1}\sum}\left\|e_{jH}^{n+1}-e_{jH}^{n}\right\|_{L^{2}_{f}}^{2}=\underset{n=0}{\overset{N-1}\sum}
        \int_{\Omega_{f}}\left|\int_{t^{n}}^{t^{n+1}}e_{jH,t}(\theta)d\theta\right|^{2}dx\leq
        \underset{n=0}{\overset{N-1}\sum}\int_{\Omega_{f}}\Delta t\int_{t^{n}}^{t^{n+1}}|e_{jH,t}(\theta)|^{2}d\theta dx\leq
       \end{equation*}
      \begin{equation}\label{40n}
        \Delta t^{2}\underset{n=0}{\overset{N}\sum}\|e_{jH,t}(\theta^{n})\|_{L^{2}_{f}}^{2}=\Delta t
        \||e_{jH,t}|\|_{L^{2}(0,T;W)}^{2},
       \end{equation}
        for $j=1,2,$ where $\theta^{n}\in(t^{n},t^{n+1}).$ Since $t^{\overline{n+1}}\in(t^{n},t^{n+1}),$ it comes from the Triangular inequality that $\int_{t^{n}}^{t^{\overline{n+1}}}|e_{1H,t}(\theta)|^{2}d\theta dx\leq\int_{t^{n}}^{t^{n+1}}|e_{1H,t} (\theta)|^{2}d\theta dx.$ This fact provides
        \begin{equation}\label{41n}
        \underset{n=0}{\overset{N-1}\sum}\left\|e_{1H}^{\overline{n+1}}-e_{1H}^{n}\right\|_{L^{2}_{f}}^{2}\leq
        \Delta t\||e_{1H,t}|\|_{L^{2}(0,T;W)}^{2};\text{\,\,\,\,}\underset{n=0}{\overset{N-1}\sum}
        \left\|e_{2H}^{n+1}-e_{2H}^{n}\right\|_{L^{2}_{f}}^{2}\leq\Delta t\||e_{2H,t}|\|_{L^{2}(0,T;W)}^{2};
       \end{equation}
       and
       \begin{equation}\label{42n}
        \underset{n=0}{\overset{N-1}\sum}\left\|2e_{1H}^{n+1}-e_{1H}^{\overline{n+1}}-e_{1H}^{n}\right\|_{L^{2}_{f}}^{2}\leq
        2\Delta t\||e_{1H,t}|\|_{L^{2}(0,T;W)}^{2}.
       \end{equation}
       Utilizing again equations $(\ref{85b}),$ it holds
        \begin{equation}\label{43n}
        \Delta t\underset{n=0}{\overset{N-1}\sum}(\|u^{\overline{n+1}}\|_{W}^{2}+\|u^{n}\|_{W}^{2})\leq
        \nu\Delta t\underset{n=0}{\overset{N}\sum}\left\{\|\nabla u^{\overline{n}}\|_{L_{f}^{2}}^{2}+
        \|\nabla u^{n}\|_{L_{f}^{2}}^{2}\right\}\leq2\nu\||\nabla u|\|_{L^{2}(0,T;W)}^{2};
       \end{equation}
       \begin{equation}\label{44n}
        \Delta t\underset{n=0}{\overset{N-1}\sum}(\|e_{1H}^{\overline{n+1}}\|_{W}^{2}+\|e_{1H}^{n}\|_{W}^{2})
        \leq2\nu\||\nabla e_{1H}|\|_{L^{2}(0,T;W)}^{2}\text{\,\,\,and\,\,\,}
        \Delta t\underset{n=0}{\overset{N-1}\sum}\|e_{2H,tt}^{\overline{n+1}}\|_{W}^{2}\leq
        \||e_{2H,tt}|\|_{L^{2}(0,T;W)}^{2}.
       \end{equation}
       Because $t^{\overline{n+1}}\in(t^{n},t^{n+1}),$ combining equations $(\ref{84b})$ and $(\ref{85b}),$ simple calculations give
       \begin{equation*}
        \Delta t\underset{n=0}{\overset{N-1}\sum}\left\{(\|u^{\overline{n+1}}\|_{W}^{2}+\|u_{H}^{\overline{n+1}}
        \|_{W}^{2})\|e_{2H}^{\overline{n+1}}\|_{W}^{2}+(\|u^{n}\|_{W}^{2}+\|u_{H}^{n}\|_{W}^{2})
        \|e_{2H}^{n}\|_{W}^{2}\right\}\leq
       \end{equation*}
       \begin{equation*}
        \nu\Delta t\underset{n=0}{\overset{N-1}\sum}\left\{\left(\||\nabla u|\|_{L^{\infty}(0,T;W)}^{2}+
        \||\nabla u_{H}|\|_{L^{\infty}(0,T;W)}^{2}\right)(\|e_{2H}^{\overline{n+1}}\|_{W}^{2}
        +\|e_{2H}^{n}\|_{W}^{2})\right\}\leq
       \end{equation*}
        \begin{equation}\label{46n}
        2\nu\left(\||\nabla u|\|_{L^{\infty}(0,T;W)}^{2}+\||\nabla u_{H}|\|_{L^{\infty}(0,T;W)}^{2}\right)
        \||\nabla e_{2H}|\|_{L^{2}(0,T;W)}^{2};
       \end{equation}
        and
       \begin{equation*}
        \Delta t\underset{n=0}{\overset{N-1}\sum}\left(\|e_{2H}^{\overline{n+1}}\|_{W}^{4}
        +\|e_{2H}^{n}\|_{W}^{4}\right)=\nu^{2}\Delta t\underset{n=0}{\overset{N-1}\sum}\left(\|\nabla e_{2H}^{\overline{n+1}}\|_{L_{f}^{2}}^{4}+\|\nabla e_{2H}^{n}\|_{L_{f}^{2}}^{4}\right)\leq
       \end{equation*}
       \begin{equation}\label{47n}
        \nu^{2}\Delta t\underset{n=0}{\overset{N-1}\sum}\left(\|\nabla e_{2H}^{\overline{n+1}}\|_{L_{f}^{2}}^{2}
        +\|\nabla e_{2H}^{n}\|_{L_{f}^{2}}^{2}\right)\||\nabla e_{2H}|\|_{L^{\infty}(0,T;W)}^{2}\leq
        2\nu^{2}\||\nabla e_{2H}|\|_{L^{\infty}(0,T;W)}^{2}\||\nabla e_{2H}|\|_{L^{2}(0,T;W)}^{2}.
       \end{equation}
        Now, applying the bounds given by inequalities $(\ref{41n})$-$(\ref{47n}),$ multiplying both sides of estimate $(\ref{39n})$ by $\frac{1}{4}$ and absorbing all the constants into a constant $C_{1},$ this yields
        \begin{equation*}
        \left\|e_{2H}^{N}\right\|_{L^{2}_{f}}^{2}+\frac{\Delta t}{3}\underset{n=0}{\overset{N-1}\sum}\left(\|e_{2H}^{\overline{n+1}}\|_{W}^{2}
        +\|e_{2H}^{n}\|_{W}^{2}\right)\leq C_{1}\left\{\left\|e_{2H}^{0}\right\|_{L^{2}_{f}}^{2}+
        \Delta t^{3}\||e_{2H,tt}|\|_{L^{2}(0,T;W)}^{2}+\Delta t\||e_{2H,t}|\|_{L^{2}(0,T;W)}^{2}\right.
       \end{equation*}
       \begin{equation*}
        +\Delta t\underset{n=0}{\overset{N-1}\sum}\left(\left\|\frac{1}{2}u_{t}^{n}-\frac{u^{\overline{n+1}}
        -u^{n}}{\Delta t}\right\|_{L^{2}_{f}}^{2}+\left\|u_{t}^{\overline{n+1}}-\frac{u^{n+1}
        -\frac{u^{\overline{n+1}}+u^{n}}{2}}{\frac{\Delta t}{2}}\right\|_{L^{2}_{f}}^{2}\right)
        +\||\nabla e_{1H}|\|_{L^{2}(0,T;W)}^{2}
       \end{equation*}
       \begin{equation*}
        +\||e_{1H,t}|\|_{L^{2}(0,T;W)}^{2}+\left(\||\nabla u|\|_{L^{\infty}(0,T;W)}^{2}+\||\nabla u_{H}|\|_{L^{\infty}(0,T;W)}^{2}\right)\||\nabla e_{2H}|\|_{L^{2}(0,T;W)}^{2}+
       \end{equation*}
       \begin{equation}\label{48n}
        \left.\||\nabla u|\|_{L^{2}(0,T;W)}^{2}+\||\nabla e_{2H}|\|_{L^{\infty}(0,T;W)}^{2}\||\nabla e_{2H}|\|_{L^{2}(0,T;W)}^{2}\right\}.
       \end{equation}
        Expanding the Taylor series with time step $\frac{\Delta t}{2}$ for both "predicted" and corrected values, we get
        \begin{equation}\label{49nn}
         u^{\overline{n+1}}=u^{n}+\frac{\Delta t}{2}u_{t}^{n}+\frac{\Delta t^{2}}{8}u_{tt}^{n}+O(\Delta t^{3}).
        \end{equation}
        Furthermore, we also get
        \begin{equation}\label{49n}
        u^{\overline{n+1}}=u^{n}+\frac{\Delta t}{2}u_{t}^{n}+\frac{\Delta t^{2}}{8}u_{tt}^{n}+\frac{\Delta t^{3}}{48}u_{ttt}^{n},\text{\,\,\,} u^{\overline{\overline{n+1}}}=u^{\overline{n+1}}
        +\frac{\Delta t}{2}u_{t}^{\overline{n+1}}+\frac{\Delta t^{2}}{8}u_{tt}^{\overline{n+1}}
        +\frac{\Delta t^{3}}{48}u_{ttt}^{\overline{n+1}},
        \end{equation}
        and
        \begin{equation}\label{50n}
        u_{t}^{n}=u_{t}^{\overline{n+1}}-\frac{\Delta t}{2}u_{tt}^{\overline{n+1}}+\frac{\Delta t^{2}}{8}u_{ttt}^{\overline{n+1}},\text{\,\,\,}u_{tt}^{n}=u_{tt}^{\overline{n+1}}
        -\frac{\Delta t}{2}u_{ttt}^{\overline{n+1}},\text{\,\,\,}u_{ttt}^{n}=u_{ttt}^{\overline{n+1}}.
        \end{equation}
        First, neglecting the term $O(\Delta t^{3}),$ it comes from approximation $(\ref{49nn})$ that
        \begin{equation}\label{52n}
         \frac{1}{2}u_{t}^{n}-\frac{u^{\overline{n+1}}-u^{n}}{\Delta t}=-\frac{\Delta t}{8}u_{tt}^{n}.
        \end{equation}
        Plugging equations $(\ref{52n})$ and $(\ref{85b}),$ it is not hard to see that
         \begin{equation}\label{53n}
         \Delta t\underset{n=0}{\overset{N-1}\sum}\left\|u_{t}^{n}-\frac{u^{\overline{n+1}}-u^{n}}{\Delta t}
         \right\|_{L^{2}_{f}}^{2}=\frac{\Delta t^{2}}{68}\left[\Delta t\underset{n=0}{\overset{N-1}\sum}
          \|u_{tt}^{n}\|_{L^{2}_{f}}^{2}\right]\leq\frac{\Delta t^{2}}{64}\||u_{tt}|\|_{L^{2}(0,T;W)}^{2}.
         \end{equation}
        Following the MacCormack approach, putting $u^{n+1}=\frac{1}{2}(u^{\overline{n+1}}+u^{\overline{\overline{n+1}}}),$ and adding side by side equation $(\ref{49n})$, we obtain
        \begin{equation}\label{54n}
        u^{n+1}=\frac{u^{\overline{n+1}}+u^{n}}{2}+\frac{\Delta t}{4}(u_{t}^{\overline{n+1}}+u_{t}^{n})+
        \frac{\Delta t^{2}}{16}(u_{tt}^{\overline{n+1}}+u_{tt}^{n})+\frac{\Delta t^{3}}{96}(u_{ttt}^{\overline{n+1}}+u_{ttt}^{n}).
        \end{equation}
         Combining equations $(\ref{50n})$ and $(\ref{54n}),$ straightforward calculations provide
         \begin{equation}\label{55n}
        u^{n+1}=\frac{u^{\overline{n+1}}+u^{n}}{2}+\frac{\Delta t}{2}u_{t}^{\overline{n+1}}+\frac{\Delta t^{3}}{48}u_{ttt}^{n},
         \end{equation}
         which is equivalent to
        \begin{equation*}
        u_{t}^{\overline{n+1}}-\frac{u^{n+1}-\frac{u^{\overline{n+1}}+u^{n}}{2}}{\frac{\Delta t}{2}}=-\frac{\Delta t^{2}}{24}u_{ttt}^{n}.
        \end{equation*}
         Utilizing this along with equation $(\ref{85b}),$ simple computations gives
         \begin{equation}\label{57n}
         \Delta t\cdot\underset{n=0}{\overset{N-1}\sum}\left\|u_{t}^{\overline{n+1}}-\frac{u^{n+1}-\frac{u^{\overline{n+1}}
          +u^{n}}{2}}{\frac{\Delta t}{2}}\right\|_{L^{2}_{f,p}}^{2}\leq \frac{\Delta t^{4}}{24^{2}}\||u_{ttt}|\|_{L^{2}(0,T;W)}^{2}.
         \end{equation}
        From relation $(\ref{84b}),$ we have that
        \begin{equation*}
        \|e_{2H}^{0}\|_{L^{2}_{f}}^{2}\leq \||e_{2H}|\|_{L^{\infty}(0,T;W)}^{2}.
        \end{equation*}
        This fact, combined with estimates $(\ref{48n}),$ $(\ref{53n}),$ and $(\ref{57n})$ yield
       \begin{equation*}
        \left\|e_{2H}^{N}\right\|_{L^{2}_{f}}^{2}+\frac{\Delta t}{3}\underset{n=0}{\overset{N-1}\sum}\left(\|e_{2H}^{\overline{n+1}}\|_{W}^{2}
        +\|e_{2H}^{n}\|_{W}^{2}\right)\leq C_{2}\left\{\||e_{2H}|\|_{L^{\infty}(0,T;W)}^{2}+
        \Delta t^{3}\||e_{2H,tt}|\|_{L^{2}(0,T;W)}^{2}+\Delta t\||e_{2H,t}|\|_{L^{2}(0,T;W)}^{2}\right.
       \end{equation*}
       \begin{equation*}
        +\Delta t^{2}\left(\||u_{tt}|\|_{L^{\infty}(0,T;W)}^{2}+\Delta t^{2}\|
        |u_{ttt}|\|_{L^{\infty}(0,T;W)}^{2}\right)+\||\nabla e_{1H}|\|_{L^{2}(0,T;W)}^{2}
        +\||e_{1H,t}|\|_{L^{2}(0,T;W)}^{2}+
       \end{equation*}
       \begin{equation}\label{58n}
        \left.\||\nabla u|\|_{L^{2}(0,T;W)}^{2}+\left(\||\nabla u|\|_{L^{\infty}(0,T;W)}^{2}+\||\nabla u_{H}|\|_{L^{\infty}(0,T;W)}^{2}+\||\nabla e_{2H}|\|_{L^{\infty}(0,T;W)}^{2}\right)\|
        |\nabla e_{2H}|\|_{L^{2}(0,T;W)}^{2}\right\},
       \end{equation}
         where all constants are absorbed into a new constant $C_{2}.$ We recall that the exact errors terms satisfy $e_{uH}^{m}=u^{m}-u_{H}^{m}=e_{1H}^{m}+e_{2H}^{m},$ and the "predicted" one $e_{uH}^{\overline{m}}=u^{\overline{m}}-u_{H}^{\overline{m}}=e_{1H}^{\overline{m}}
         +e_{2H}^{\overline{m}}.$ When using the Triangular inequality, it comes from
        estimate $(\ref{58n})$ and the inequality $\frac{1}{2}\|a+b\|^{2}\leq\|a\|^{2}+\|b\|^{2}$ that
        \begin{equation*}
        \left\|e_{2H}^{N}\right\|_{L^{2}_{f}}^{2}+\frac{\Delta t}{3}\underset{n=0}{\overset{N-1}\sum}\left(\|e_{2H}^{\overline{n+1}}\|_{W}^{2}
        +\|e_{2H}^{n}\|_{W}^{2}\right)\leq C_{3}\left\{\||e_{2H}|\|_{L^{\infty}(0,T;W)}^{2}+
        \Delta t^{3}\||e_{2H,tt}|\|_{L^{2}(0,T;W)}^{2}+\Delta t\||e_{2H,t}|\|_{L^{2}(0,T;W)}^{2}\right.
       \end{equation*}
       \begin{equation*}
        +\Delta t^{2}\left(\||u_{tt}|\|_{L^{\infty}(0,T;W)}^{2}+\Delta t^{2}\||u_{ttt}|\|_{L^{\infty}(0,T;W)}^{2}\right)
        +\||\nabla e_{1H}|\|_{L^{2}(0,T;W)}^{2}+\||e_{1H,t}|\|_{L^{2}(0,T;W)}^{2}+
       \end{equation*}
       \begin{equation*}
        \||\nabla u|\|_{L^{2}(0,T;W)}^{2}+\left(\||\nabla u|\|_{L^{\infty}(0,T;W)}^{2}+\||\nabla u_{H}|\|_{L^{\infty}(0,T;W)}^{2}+
        \||\nabla e_{2H}|\|_{L^{\infty}(0,T;W)}^{2}\right)\||\nabla e_{2H}|\|_{L^{2}(0,T;W)}^{2}+
       \end{equation*}
       \begin{equation}\label{59n}
        \left.\|e_{1H}^{N}\|_{L^{2}_{f}}^{2}+\Delta t\underset{n=0}{\overset{N-1}\sum}
        \left(\|e_{1}^{\overline{n+1}}\|_{W}^{2} +\|e_{1}^{n}\|_{W}^{2}\right)\right\}
       \end{equation}
        According to equations $(\ref{84b})$ and $(\ref{85b}),$ we have
        \begin{equation*}
        \|e_{1H}^{N}\|_{L^{2}_{f}}^{2}\leq \||e_{1H}|\|_{L^{\infty}(0,T;W)},\text{\,\,\,and\,\,\,}
         \Delta t\underset{n=0}{\overset{N-1}\sum}\left(\|e_{1}^{\overline{n+1}}\|_{W}^{2}+\|e_{1}^{n}\|_{W}^{2}\right)
         \leq2\nu\||\nabla e_{1H}|\|_{L^{2}(0,T;W)}^{2}.
       \end{equation*}
        Substituting this into relation $(\ref{59n}),$ we obtain
        \begin{equation*}
        \left\|e_{2H}^{N}\right\|_{L^{2}_{f}}^{2}+\frac{\Delta t}{3}\underset{n=0}{\overset{N-1}\sum}\left(\|e_{2H}^{\overline{n+1}}\|_{W}^{2}
        +\|e_{2H}^{n}\|_{W}^{2}\right)\leq C_{4}\left\{\||e_{1H}|\|_{L^{\infty}(0,T;W)}^{2}+\||e_{2H}|\|_{L^{\infty}(0,T;W)}^{2}+ \right.
       \end{equation*}
       \begin{equation*}
        \Delta t^{3}\||e_{2H,tt}|\|_{L^{2}(0,T;W)}^{2}+\Delta t\||e_{2H,t}|\|_{L^{2}(0,T;W)}^{2}+\||\nabla e_{1H}|\|_{L^{2}(0,T;W)}^{2}+\||e_{1H,t}|\|_{L^{2}(0,T;W)}^{2}+
       \end{equation*}
       \begin{equation*}
        \left(\||\nabla u|\|_{L^{\infty}(0,T;W)}^{2}+\||\nabla u_{H}|\|_{L^{\infty}(0,T;W)}^{2}+
        \||\nabla e_{2H}|\|_{L^{\infty}(0,T;W)}^{2}\right)\||\nabla e_{2H}|\|_{L^{2}(0,T;W)}^{2}+
       \end{equation*}
       \begin{equation}\label{61n}
        \left.\||\nabla u|\|_{L^{2}(0,T;W)}^{2}+\Delta t^{2}\left(\||u_{tt}|\|_{L^{\infty}(0,T;W)}^{2}+\Delta t^{2}\||u_{ttt}|\|_{L^{\infty}(0,T;W)}^{2}\right)\right\},
       \end{equation}
         where we absorbed all constants into a new constant $C_{4}.$ In addition, estimate $(\ref{61n})$ holds for any $\widetilde{u}\in V_{h}.$ From relation $(\ref{26}),$ $V_{h}$ is a subspace of $W_{h},$ so it comes from relation $(\ref{83b})$ that the infimum over the space $W_{h}$ is an upper bound of the infimum over the subspace $V_{h}.$ Hence, the following estimate holds for some positive constant $C_{5}$
        \begin{equation*}
        \left\|e_{uH}^{N}\right\|_{L^{2}_{f}}^{2}+\frac{\Delta t}{3}\underset{n=0}{\overset{N-1}\sum}
        \left(\|e_{uH}^{\overline{n+1}}\|_{W}^{2}+\|e_{uH}^{n}\|_{W}^{2}\right)\leq
        C_{5}\left\{\underset{\widetilde{u}\in W_{h}}{\inf}\||u-\widetilde{u}|\|_{L^{\infty}(0,T;W)}^{2}
        +\underset{\widetilde{u}\in W_{h}}{\inf}\||\widetilde{u}-u_{H}|\|_{L^{\infty}(0,T;W)}^{2}+\right.
       \end{equation*}
       \begin{equation*}
        \Delta t^{2}\left(\||u_{tt}|\|_{L^{2}(0,T;W)}^{2}+\Delta t^{2}\||u_{ttt}|\|_{L^{2}(0,T;W)}^{2}\right)
        +\||\nabla u|\|_{L^{2}(0,T;W)}^{2}+\underset{\widetilde{u}\in W_{h}}{\inf}\||(u-\widetilde{u})_{t}|\|_{L^{2}(0,T;W)}^{2}
       \end{equation*}
       \begin{equation*}
        +\underset{\widetilde{u}\in W_{h}}{\inf}\left(\||\nabla u|\|_{L^{\infty}(0,T;W)}^{2}+\||\nabla u_{H}|\|_{L^{\infty}(0,T;W)}^{2}+
        \||\nabla(\widetilde{u}-u_{H})|\|_{L^{\infty}(0,T;W)}^{2}\right)\||\nabla(\widetilde{u}
        -u_{H})|\|_{L^{2}(0,T;W)}^{2}
       \end{equation*}
       \begin{equation*}
        \left.+\Delta t\underset{\widetilde{u}\in W_{h}}{\inf}\||(\widetilde{u}-u_{H})_{t}|\|_{L^{\infty}(0,T;W)}^{2}+
        \Delta t^{3}\underset{\widetilde{u}\in W_{h}}{\inf}\||(\widetilde{u}-u_{H})_{tt}|\|_{L^{\infty}(0,T;W)}^{2} +
        \underset{\widetilde{u}\in W_{h}}{\inf}\||\nabla (u-\widetilde{u})|\|_{L^{2}(0,T;W)}^{2}\right\}.
       \end{equation*}
       Since $\underset{\widetilde{u}\in W_{h}}{\inf}[\||\nabla u|\|_{L^{\infty}(0,T;W)}^{2}+\||\nabla u_{H}|\|_{L^{\infty}(0,T;W)}^{2}+
       \||\nabla(\widetilde{u}-u_{H})|\|_{L^{\infty}(0,T;W)}^{2}]\||\nabla (\widetilde{u}-u_{H})|\|_{L^{2}(0,T;W)}^{2}=0,$
       $\underset{\widetilde{u}\in W_{h}}{\inf}\||\widetilde{u}-u_{H}|\|_{L^{\infty}(0,T;W)}=0$ and
       $\underset{\widetilde{u}\in W_{h}}{\inf}\left[\Delta t^{3}\||(\widetilde{u}-u_{H})_{tt}|\|_{L^{\infty}(0,T;W)}^{2}+
       \Delta t\||(\widetilde{u}-u_{H})_{t}|\|_{L^{\infty}(0,T;W)}^{2}\right]=0,$ the proof of Lemma $\ref{l3}$ is completed thanks to estimates $(\ref{80b})$ and $(\ref{81b}).$
        \end{proof}

         Using Lemmas $\ref{l2}$ and $\ref{l3},$ we are ready to state and prove the main result of this paper.

        \begin{theorem}\label{t2}
         Consider the discrete variational formulation of MCRS scheme $(\ref{36})$-$(\ref{40b}).$ Let $N$ be a nonnegative integer $N\geq 1,$ and $T$ be a positive parameter. Under the regularity condition $(\ref{86b}),$ there exists a positive constant $\widehat{C}>0$ such that, for any $t^{N}\in[0;\infty),$ with $t^{N}=N\Delta t,$ it holds
       \begin{equation*}
        \|e_{uh}^{N}\|_{L^{2}_{f}}^{2}+\Delta t\underset{n=1}{\overset{N-1}\sum}\left\|e_{uh}^{n+1}
        +e_{uh}^{n-1}\right\|_{W}^{2}\leq \widehat{C}\left\{h^{2r}\left(h^{2}\||u|\|_{L^{\infty}(0,T;W^{r+1})}^{2}
        +\||u|\|_{L^{2}(0,T;W^{r+1})}^{2}+\right.\right.
       \end{equation*}
       \begin{equation*}
       \left.h^{2}\||u_{t}|\|_{L^{2}(0,T;W^{r+1})}^{2}\right)+\Delta t^{2}h^{2r+2}\||p|
       \|_{L^{2}(0,T;W^{r+1})}^{2}+\Delta t^{4}\left[\||\nabla u_{tt}|\|_{L^{2}(0,T;W)}^{2}
       +\||u_{ttt}|\|_{L^{2}(0,T;W)}^{2}\right]
       \end{equation*}
       \begin{equation*}
        +\Delta t^{2}\left(\||\nabla u|\|_{L^{\infty}(0,T;W)}^{2}+\||\nabla u_{H}|\|_{L^{\infty}(0,T;W)}^{2}\right)
        \left(\||u_{tt}|\|_{L^{2}(0,T;W^{r+1})}^{2}+\Delta t^{2}\||u_{ttt}|\|_{L^{2}(0,T;W^{r+1})}^{2}\right)+
       \end{equation*}
       \begin{equation*}
       H^{2r}\left(\||\nabla u|\|_{L^{\infty}(0,T;W)}^{2}+\||\nabla u_{H}|\|_{L^{\infty}(0,T;W)}^{2}\right)
       \left(H^{2}\||u|\|_{L^{\infty}(0,T;W^{r+1})}^{2}+\||u|\|_{L^{2}(0,T;W^{r+1})}^{2}+\right.
        \end{equation*}
        \begin{equation*}
         \left.\left.H^{2}\||u_{t}|\|_{L^{2}(0,T;W^{r+1})}^{2}\right)+\||\nabla u|\|_{L^{2}(0,T;W^{r+1})}^{2}\left(\||\nabla u|\|_{L^{\infty}(0,T;W)}^{2}+\||\nabla u_{H}|\|_{L^{\infty}(0,T;W)}^{2}\right)\right\},
        \end{equation*}
        where $r$ is a nonnegative integer.
        \end{theorem}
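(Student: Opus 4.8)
The plan is to reproduce, for the fine grid, the energy argument carried out for the coarse grid in Lemma \ref{l3}, the essential new feature being that the coupling term $b(u_H^n,u_H^n,\cdot)$ injects the \emph{coarse}-grid error into the \emph{fine}-grid estimate, and that error has already been controlled by Lemma \ref{l3}. First I would take as the working identity the Step~II relation $(\ref{21n})$ of Lemma \ref{l2}, after the splitting $e_{uh}^m=e_{1h}^m+e_{2h}^m$ of $(\ref{10an})$ into the interpolation error $e_{1h}^m=u^m-\widetilde u^m$ and the discrete error $e_{2h}^m=\widetilde u^m-u_h^m\in V_h$. Because $e_{2h}^m\in V_h$ and $p^n,p_h^m\in Q_h\subset Q$, all the divergence pairings $\chi(e_{2h}^m,\cdot)$ drop out, so the left-hand side of $(\ref{21n})$ contains only $\|e_{2h}^{n+1}\|_{L^2_f}^2-\|e_{2h}^{n-1}\|_{L^2_f}^2$, the truncation pairing with $e_{2h,tt}^n$, and the diffusive form $a(e_{2h}^{n+1}+e_{2h}^{n-1},e_{2h}^{n+1}+e_{2h}^{n-1})=\|e_{2h}^{n+1}+e_{2h}^{n-1}\|_W^2$, which is exactly the quantity appearing in the statement.

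Next I would bound, term by term, the right-hand side of $(\ref{21n})$. The diffusive interpolation terms $a(e_{1h}^{n+1}+e_{1h}^{n-1},\cdot)$ and $a(e_{1H}^{\cdot},\cdot)$ are dispatched by Cauchy--Schwarz and Young with a small constant $\tfrac16$ absorbed on the left, exactly as in $(\ref{30n})$, $(\ref{35n})$. The time-difference terms $\tfrac1{\Delta t}(e_{1h}^n-e_{1h}^{n-1},\cdot)$ are rewritten as time integrals of $e_{1h,t}$ and summed as in $(\ref{40n})$, producing $\||e_{1h,t}|\|_{L^2(0,T;W)}^2$. The consistency functional $\xi_h^n$ contains both a Crank--Nicolson time truncation and the spatial defect $a\!\left(u^n-\tfrac{u^{n+1}+u^{n-1}}{2},\cdot\right)$; expanding by Taylor as in $(\ref{49nn})$--$(\ref{57n})$ gives $u^n-\tfrac{u^{n+1}+u^{n-1}}2=-\tfrac{\Delta t^2}2 u_{tt}^n+O(\Delta t^4)$, so these contribute precisely the $\Delta t^4\big(\||u_{ttt}|\|_{L^2(0,T;W)}^2+\||\nabla u_{tt}|\|_{L^2(0,T;W)}^2\big)$ terms. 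Finally the pressure pairing $-\tfrac12\chi(e_{1h}^{n+1}+e_{1h}^{n-1},p^n-\lambda_h^n)$ is controlled by the continuity of $\chi$ together with the pressure approximation bound $(\ref{82b})$, which yields the $\Delta t^2 h^{2r+2}\||p|\|_{L^2(0,T;W^{r+1})}^2$ contribution.

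The crux is the four coupling terms $b(e_{1H}^n,u^n,\cdot)$, $b(e_{2H}^n,u^n,\cdot)$, $b(u_H^n,e_{1H}^n,\cdot)$ and $b(u_H^n,e_{2H}^n,\cdot)$, where the test function is $\tfrac{e_{2h}^{n+1}+e_{2h}^{n-1}}{2}$. I would estimate each by the trilinear bounds $(\ref{17})$--$(\ref{18})$ and the Poincar\'e inequality $(\ref{34})$, absorbing the factor $\|e_{2h}^{n+1}+e_{2h}^{n-1}\|_W$ partly on the left and leaving coefficients of the form $\tfrac{\alpha^2}{\lambda_1\nu}\big(\|u^n\|_W^2+\|u_H^n\|_W^2\big)\big(\|e_{1H}^n\|_W^2+\|e_{2H}^n\|_W^2\big)$. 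After summing over $n$, the factor $\Delta t\sum\big(\|e_{uH}^n\|_W^2+\|e_{uH}^{\overline{n+1}}\|_W^2\big)$ that arises is exactly the left-hand quantity of Lemma \ref{l3}; substituting that estimate, applied with the \emph{coarse} mesh size $H$, converts these coupling contributions into the $H^{2r}(\cdots)$ and $\Delta t^2\big(\||\nabla u|\|_{L^\infty}^2+\||\nabla u_H|\|_{L^\infty}^2\big)(\cdots)$ blocks of the statement, the velocity-gradient prefactors being furnished by the regularity $(\ref{86b})$ and the a~priori bound on $\|u_H\|_W$ from Lemma \ref{l1}.

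At last I would sum $(\ref{21n})$ over $n=1,\dots,N-1$. Since the method is three-level, the differences $\|e_{2h}^{n+1}\|_{L^2_f}^2-\|e_{2h}^{n-1}\|_{L^2_f}^2$ telescope in steps of two, leaving $\|e_{2h}^N\|_{L^2_f}^2+\|e_{2h}^{N-1}\|_{L^2_f}^2$ minus the starting data, which are disposed of by $e_{2h}^0=0$ and the controlled initialization of $u_h^1$. Multiplying by $4\Delta t$, absorbing the small $\|e_{2h}\|_W^2$ multiples generated by Young's inequality, and then passing to the infimum over $\widetilde u\in W_h$ (legitimate by $(\ref{83b})$, since $V_h\subset W_h$ by $(\ref{26})$) collapses every $e_{1h}$ interpolation term into powers of $h$ via $(\ref{80b})$--$(\ref{81b})$; the triangle inequality then replaces $e_{2h}$ by $e_{uh}$ on the left, producing the claimed bound. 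The hard part will be the discrete Gr\"onwall closure: the coefficients $\big(\|u^n\|_W^2+\|u_H^n\|_W^2\big)$ and the quartic self-terms $\|e_{2h}\|_W^4$ (cf. $(\ref{25n})$, $(\ref{47n})$) that multiply $\|e_{2h}^{n\pm1}\|_W^2$ must be absorbed while keeping the estimate free of any time-step restriction, which requires the $L^\infty(0,T;W)$ regularity of $(\ref{86b})$ and careful bookkeeping of the constants so that $\widehat C$ remains independent of $\Delta t$.
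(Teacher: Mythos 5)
Your proposal follows essentially the same route as the paper: it starts from the Step~II identity $(\ref{21n})$ of Lemma \ref{l2}, bounds each right-hand term by Cauchy--Schwarz/Young, Taylor expansion of the consistency defects, and the pressure approximation $(\ref{82b})$, feeds the coarse-grid coupling terms into Lemma \ref{l3}, sums over $n$, and passes to the infimum over $W_h$ via $(\ref{83b})$ and $(\ref{80b})$--$(\ref{81b})$. The only substantive divergence is your anticipated ``Gr\"onwall closure'' for quartic self-terms: these do not in fact arise in the fine-grid step, because the nonlinearity in $(\ref{40a})$ is evaluated at the coarse-grid solution, so the trilinear terms are linear in the fine-grid test function $\tfrac{e_{2h}^{n+1}+e_{2h}^{n-1}}{2}$ and are absorbed directly, with the remaining coefficient $\Delta t\sum_n\|e_{uH}^n\|_W^2(\|u^n\|_W^2+\|u_H^n\|_W^2)$ disposed of by Lemma \ref{l3} and the $L^\infty(0,T;W)$ bounds, exactly as you describe; no time-step restriction or iteration argument is needed.
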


       \begin{proof}
       We recall that $\chi_{h}^{n}(\frac{e_{2h}^{n+1}+e_{2h}^{n-1}}{2},p^{n}-\frac{p_{h}^{n+1}+p_{h}^{n-1}}{2})=
       \frac{1}{2}\chi(e_{2h}^{n+1}+e_{2h}^{n-1},p^{n}-\frac{p_{h}^{n+1}
       +p_{h}^{n-1}}{2})-\frac{1}{2}\chi(e^{n+1}_{uh}+e^{n-1}_{uh}-(u^{n+1}+u^{n-1}-2u^{n}),p^{n}-\frac{p_{h}^{n+1}+p_{h}^{n-1}}{2}).$
       This equality allows to see that $\chi_{h}^{n}(\frac{e_{2h}^{n+1}+e_{2h}^{n-1}}{2},p^{n}-\lambda_{h}^{n})
       =-\frac{1}{2}\chi(e_{1h}^{n+1}+e_{1h}^{n-1},p^{n}-\lambda_{h}^{n}),$ where $\lambda_{h}^{n}=\frac{p_{h}^{n+1}+p_{h}^{n-1}}{2}.$ Indeed, $u^{m}\in F,$ and $p^{m},$ $p_{h}^{m}\in Q_{h}\subset Q,$ imply $\chi(u^{n+1}+u^{n-1}-2u^{n},p^{n}-\frac{p_{h}^{n+1}+p_{h}^{n-1}}{2})=0,$ where $F$ and $Q$ are given by relations $(\ref{14})$ and $(\ref{5}),$ respectively. This fact along with equation $(\ref{21n})$ given by Lemma $\ref{l2},$ provide
       \begin{equation*}
        \frac{1}{4\Delta t}\left\{\|e_{2h}^{n+1}\|_{L^{2}_{f}}^{2}-\|e_{2h}^{n-1}\|_{L^{2}_{f}}^{2}\right\}+\frac{1}{4} \left\|e_{2h}^{n+1}+e_{2h}^{n-1}\right\|_{W}^{2}=\xi^{n}_{h}(\frac{e_{2h}^{n+1}+e_{2h}^{n-1}}{2})+
        \frac{\Delta t}{4}(e_{2h}^{n+1}+e_{2h}^{n-1},e_{2h,tt}^{n})-
       \end{equation*}
       \begin{equation*}
        \frac{1}{2}\chi(e_{1h}^{n+1}+e_{1h}^{n-1},p^{n}-\lambda_{h}^{n})-b(e_{1H}^{n},u^{n},
        \frac{e_{2h}^{n+1}+e_{2h}^{n-1}}{2})-b(e_{2H}^{n},u^{n},\frac{e_{2h}^{n+1}+e_{2h}^{n-1}}{2})-
        b(u_{H}^{n},e_{1H}^{n},\frac{e_{2h}^{n+1}+e_{2h}^{n-1}}{2})
       \end{equation*}
       \begin{equation*}
        -b(u_{H}^{n},e_{2H}^{n},\frac{e_{2h}^{n+1}+e_{2h}^{n-1}}{2})-\frac{1}{4}a\left(e_{1h}^{n+1}+e_{1h}^{n-1},e_{2h}^{n+1}
        +e_{2h}^{n-1}\right)-\frac{1}{2\Delta t}\left(e_{1h}^{n}-e_{1h}^{n-1},e_{2h}^{n+1}+e_{2h}^{n-1}\right),
       \end{equation*}
       which is equivalent to
       \begin{equation*}
        \frac{1}{4\Delta t}\left\{\|e_{2h}^{n+1}\|_{L^{2}_{f}}^{2}-\|e_{2h}^{n-1}\|_{L^{2}_{f}}^{2}\right\}+\frac{1}{4} \left\|e_{2h}^{n+1}+e_{2h}^{n-1}\right\|_{W}^{2}=\xi^{n}_{h}(\frac{e_{2h}^{n+1}+e_{2h}^{n-1}}{2})+
        \frac{\Delta t}{4}(e_{2h}^{n+1}+e_{2h}^{n-1},e_{2h,tt}^{n})-
       \end{equation*}
       \begin{equation*}
        \frac{1}{2}\chi(e_{1h}^{n+1}+e_{1h}^{n-1},p^{n}-\lambda_{h}^{n})-b(e_{uH}^{n},u^{n},
        \frac{e_{2h}^{n+1}+e_{2h}^{n-1}}{2})-b(u_{H}^{n},e_{uH}^{n},\frac{e_{2h}^{n+1}+e_{2h}^{n-1}}{2})
        -\frac{1}{4}a\left(e_{1h}^{n+1}+e_{1h}^{n-1},\right.
       \end{equation*}
       \begin{equation}\label{64n}
        \left. e_{2h}^{n+1}+e_{2h}^{n-1}\right)-\frac{1}{2\Delta t}\left(e_{1h}^{n}-e_{1h}^{n-1},e_{2h}^{n+1}+e_{2h}^{n-1}\right).
       \end{equation}
       Since $\xi^{n}_{h}(\frac{e_{2h}^{n+1}+e_{2h}^{n-1}}{2})=\frac{1}{2}\xi^{n}_{h}(e_{2h}^{n+1}
       +e_{2h}^{n-1}),$ multiplying both sides of $(\ref{64n})$ by $4\Delta t$ results in
       \begin{equation*}
        \|e_{2h}^{n+1}\|_{L^{2}_{f}}^{2}-\|e_{2h}^{n-1}\|_{L^{2}_{f}}^{2}+\Delta t\left\|e_{2h}^{n+1}+e_{2h}^{n-1}\right\|_{W}^{2}=
        2\Delta t\xi^{n}_{h}(e_{2h}^{n+1}+e_{2h}^{n-1})+\Delta t^{2}(e_{2h}^{n+1}+e_{2h}^{n-1},e_{2h,tt}^{n})
       \end{equation*}
       \begin{equation*}
        -2\Delta t\chi(e_{1h}^{n+1}+e_{1h}^{n-1},p^{n}-\lambda_{h}^{n})-2\Delta t b(e_{uH}^{n},u^{n},e_{2h}^{n+1}+e_{2h}^{n-1})
        -2\Delta tb(u_{H}^{n},e_{uH}^{n},e_{2h}^{n+1}+e_{2h}^{n-1})
       \end{equation*}
       \begin{equation}\label{65n}
        -2\left(e_{1h}^{n}-e_{1h}^{n-1},e_{2h}^{n+1}+e_{2h}^{n-1}\right)
        -\Delta t a\left(e_{1h}^{n+1}+e_{1h}^{n-1},e_{2h}^{n+1}+e_{2h}^{n-1}\right).
       \end{equation}
       To bound each term of the RHS of equation $(\ref{65n})$ we should apply the Cauchy-Schwarz and Young inequalities together with Poincar\'{e} inequality $(\ref{34})$ and estimate $(\ref{17}),$
       \begin{equation*}
        2\Delta t\xi_{h}^{n}(e_{2h}^{n+1}+e_{2h}^{n-1})=-2\Delta t\left(u_{t}^{\overline{n}}-\frac{u^{n}-u^{n-1}}{\Delta t}
        ,e_{2h}^{n+1}+e_{2h}^{n-1}\right)-2\Delta t a\left(u^{n}-\frac{u^{n+1}+u^{n-1}}{2},e_{2h}^{n+1}+\right.
       \end{equation*}
       \begin{equation*}
        \left. e_{2h}^{n-1}\right)\leq 2\Delta t\left\{\left\|u_{t}^{\overline{n}}-\frac{u^{n}-u^{n-1}}{\Delta t}\right\|_{L^{2}_{f}}
        \left\|e_{2h}^{n+1}+e_{2h}^{n-1}\right\|_{L^{2}_{f}}+\left\|u^{n}-\frac{u^{n+1}+u^{n-1}}{2}\right\|_{W}
        \left\|e_{2h}^{n+1}+e_{2h}^{n-1}\right\|_{W}\right\}\leq
       \end{equation*}
       \begin{equation*}
        2\Delta t\left\{\frac{1}{\sqrt{\lambda_{1}\nu}}\left\|u_{t}^{\overline{n}}-\frac{u^{n}-u^{n-1}}{\Delta t}\right\|_{L^{2}_{f}}
        \left\|e_{2h}^{n+1}+e_{2h}^{n-1}\right\|_{W}+\left\|u^{n}-\frac{u^{n+1}+u^{n-1}}{2}\right\|_{W}
        \left\|e_{2h}^{n+1}+e_{2h}^{n-1}\right\|_{W}\right\}\leq
       \end{equation*}
       \begin{equation}\label{66n}
        \frac{\Delta t}{12}\left\|e_{2h}^{n+1}+e_{2h}^{n-1}\right\|_{W}^{2}+24\Delta t
        \left(\frac{1}{\lambda_{1}\nu}\left\|u_{t}^{\overline{n}}-\frac{u^{n}-u^{n-1}}{\Delta t}
        \right\|_{L^{2}_{f}}^{2}+\left\|u^{n}-\frac{u^{n+1}+u^{n-1}}{2}\right\|_{W}^{2}\right);
       \end{equation}
       \begin{equation*}
        \Delta t^{2}(e_{2h}^{n+1}+e_{2h}^{n-1},e_{2h,tt}^{n})\leq \Delta t^{2}
        \|e_{2h}^{n+1}+e_{2h}^{n-1}\|_{L_{f}^{2}}\|e_{2h,tt}^{n}\|_{L_{f}^{2}}\leq
       \end{equation*}
       \begin{equation}\label{67n}
        2\Delta t\left(\frac{1}{\sqrt{12}}\|e_{2h}^{n+1}+e_{2h}^{n-1}\|_{W}\frac{\sqrt{12}\Delta t}{2\sqrt{\lambda_{1}\nu}}\|e_{2h,tt}^{n}\|_{L_{f}^{2}}\right)\leq\frac{\Delta t}{12}\|e_{2h}^{n+1}+e_{2h}^{n-1}\|_{W}^{2}+\frac{3\Delta t^{3}}
        {\lambda_{1}\nu}\|e_{2h,tt}^{n}\|_{L_{f}^{2}}^{2};
       \end{equation}
       \begin{equation*}
        2\Delta t b(e_{uH}^{n},u^{n},e_{2h}^{n+1}+e_{2h}^{n-1})\leq 2\Delta t\alpha
        \|e_{uH}^{n}\|_{L_{f}^{2}}^{\frac{1}{2}}\|e_{uH}^{n}\|_{W}^{\frac{1}{2}}\|u^{n}\|_{W}\|e_{2h}^{n+1}
        +e_{2h}^{n-1}\|_{L_{f}^{2}}^{\frac{1}{2}}\|e_{2h}^{n+1}+e_{2h}^{n-1}\|_{W}^{\frac{1}{2}}+
       \end{equation*}
       \begin{equation*}
        2\Delta t\alpha\|e_{uH}^{n}\|_{W}\|u^{n}\|_{L_{f}^{2}}^{\frac{1}{2}}\|u^{n}\|_{W}^{\frac{1}{2}}
        \|e_{2h}^{n+1}+e_{2h}^{n-1}\|_{L_{f}^{2}}^{\frac{1}{2}}\|e_{2h}^{n+1}+e_{2h}^{n-1}\|_{W}^{\frac{1}{2}}\leq
       \end{equation*}
       \begin{equation*}
        \frac{2\alpha\Delta t}{(\lambda_{1}\nu)^{\frac{1}{4}}}\|e_{uH}^{n}\|_{W}^{\frac{1}{2}}\|e_{uH}^{n}\|_{L_{f}^{2}}^{\frac{1}{2}}
        \|u^{n}\|_{W}\|e_{2h}^{n+1}+e_{2h}^{n-1}\|_{W}+\frac{2\alpha\Delta t}{(\lambda_{1}\nu)^{\frac{1}{2}}}
        \|e_{uH}^{n}\|_{W}\|u^{n}\|_{W}\|e_{2h}^{n+1}+e_{2h}^{n-1}\|_{W}\leq
       \end{equation*}
       \begin{equation}\label{68n}
        \frac{48\alpha^{2}\Delta t}{\lambda_{1}\nu}\|e_{uH}^{n}\|_{W}^{2}\|u^{n}\|_{W}^{2}+
        \frac{\Delta t}{12}\|e_{2h}^{n+1}+e_{2h}^{n-1}\|_{W}^{2}.
       \end{equation}
       Likewise,
       \begin{equation}\label{69n}
        2\Delta t b(u_{H}^{n},e_{uH}^{n},e_{2h}^{n+1}+e_{2h}^{n-1})\leq \frac{48\alpha^{2}\Delta t}{\lambda_{1}\nu}
        \|e_{uH}^{n}\|_{W}^{2}\|u_{H}^{n}\|_{W}^{2}+\frac{\Delta t}{12}\|e_{2h}^{n+1}+e_{2h}^{n-1}\|_{W}^{2};
       \end{equation}
       \begin{equation*}
        \Delta t a(e_{1h}^{n+1}+e_{1h}^{n-1},e_{2h}^{n+1}+e_{2h}^{n-1})\leq \Delta t
        \|e_{1h}^{n+1}+e_{1h}^{n-1}\|_{W}
        \|e_{2h}^{n+1}+e_{2h}^{n-1}\|_{W}\leq 3\Delta t\left\|e_{1h}^{n+1}+e_{1h}^{n-1}\right\|_{W}^{2}+
       \end{equation*}
       \begin{equation}\label{70n}
        \frac{\Delta t}{12}\left\|e_{2h}^{n+1}+e_{2h}^{n-1}\right\|_{W}^{2};
       \end{equation}
       \begin{equation*}
        2(e_{1h}^{n}-e_{1h}^{n-1},e_{2h}^{n+1}+e_{2h}^{n-1})\leq 2(\lambda_{1}\nu)^{-\frac{1}{2}}\|e_{1h}^{n}-e_{1h}^{n-1}\|_{L_{f}^{2}}
        \|e_{2h}^{n+1}+e_{2h}^{n-1}\|_{W}\leq \frac{12}{\lambda_{1}\nu\Delta t}\left\|e_{1h}^{n}-e_{1h}^{n-1}\right\|_{L_{f}^{2}}^{2}+
       \end{equation*}
       \begin{equation}\label{71n}
        \frac{\Delta t}{12}\left\|e_{2h}^{n+1}+e_{2h}^{n-1}\right\|_{W}^{2};
       \end{equation}
       \begin{equation}\label{71nn}
        -2\Delta t\chi(e_{1h}^{n+1}+e_{1h}^{n-1},p^{n}-\lambda_{h}^{n})\leq 2\Delta t\|\nabla\cdot(e_{1h}^{n+1}+e_{1h}^{n-1})\|_{L_{f}^{2}}
        \|p^{n}-\lambda_{h}^{n}\|_{L_{f}^{2}}\leq 2\Delta t\|e_{1h}^{n+1}+e_{1h}^{n-1}\|_{W}^{2}
        +\frac{2\Delta t}{\nu}\|p^{n}-\lambda_{h}^{n}\|_{L_{f}^{2}}^{2},
       \end{equation}
       where the last inequality comes from the Cauchy-Schwarz inequality and the estimate $\|\nabla\cdot(e_{1h}^{n+1}
       +e_{1h}^{n-1})\|_{L_{f}^{2}}\leq \sqrt{2}\|\nabla(e_{1h}^{n+1}+e_{1h}^{n-1})\|_{L_{f}^{2}}.$
       Replacing the bounds given by estimates $(\ref{66n})$-$(\ref{71nn})$ into relation $(\ref{65n})$ and grouping the remaining terms, the inequality becomes
       \begin{equation*}
        \|e_{2h}^{n+1}\|_{L^{2}_{f}}^{2}-\|e_{2h}^{n-1}\|_{L^{2}_{f}}^{2}+\frac{\Delta t}{2}\left\|e_{2h}^{n+1}
        +e_{2h}^{n-1}\right\|_{W}^{2}\leq 24\Delta t\left(\frac{1}{\lambda_{1}\nu}\left\|u_{t}^{\overline{n}}-\frac{u^{n}-u^{n-1}}{\Delta t}
        \right\|_{L^{2}_{f}}^{2}+\left\|u^{n}-\frac{u^{n+1}+u^{n-1}}{2}\right\|_{W}^{2}\right)
       \end{equation*}
       \begin{equation*}
        +\frac{3\Delta t^{3}}{\lambda_{1}\nu}\|e_{2h,tt}^{n}\|_{L_{f}^{2}}^{2}+\frac{48\alpha^{2}\Delta t}
        {\lambda_{1}\nu}\|e_{uH}^{n}\|_{W}^{2}\left\{\|u^{n}\|_{W}^{2}+\|u_{H}^{n}\|_{W}^{2}\right\}+ 5\Delta t\|e_{1h}^{n+1}+e_{1h}^{n-1}\|_{W}^{2}+\frac{12}{\lambda_{1}\nu\Delta t}\|e_{1h}^{n}-e_{1h}^{n-1}\|_{L_{f}^{2}}^{2}
       \end{equation*}
       \begin{equation}\label{72n}
        +\frac{2\Delta t}{\nu}\|p^{n}-\lambda_{h}^{n}\|_{L_{f}^{2}}^{2}.
       \end{equation}
       Summing relation $(\ref{72n})$ up from $n=1,...,N-1,$ this gives
       \begin{equation*}
        \|e_{2h}^{N}\|_{L^{2}_{f}}^{2}+\frac{\Delta t}{2}\underset{n=1}{\overset{N-1}\sum}\left\|e_{2h}^{n+1}
        +e_{2h}^{n-1}\right\|_{W}^{2}\leq 24\Delta t\underset{n=1}{\overset{N-1}\sum}\left(\frac{1}{\lambda_{1}\nu}\left\|u_{t}^{\overline{n}}-
        \frac{u^{n}-u^{n-1}}{\Delta t}\right\|_{L^{2}_{f}}^{2}+\left\|u^{n}-\frac{u^{n+1}+u^{n-1}}{2}\right\|_{W}^{2}\right)
       \end{equation*}
       \begin{equation*}
        +\frac{3\Delta t^{3}}{\lambda_{1}\nu}\underset{n=1}{\overset{N-1}\sum}\|e_{2h,tt}^{n}\|_{L_{f}^{2}}^{2}
        +\frac{48\alpha^{2}\Delta t}
        {\lambda_{1}\nu}\underset{n=1}{\overset{N-1}\sum}\|e_{uH}^{n}\|_{W}^{2}\left\{\|u^{n}\|_{W}^{2}
        +\|u_{H}^{n}\|_{W}^{2}\right\}+ 5\Delta t\underset{n=1}{\overset{N-1}\sum}\|e_{1h}^{n+1}+e_{1h}^{n-1}\|_{W}^{2}
       \end{equation*}
       \begin{equation}\label{73n}
       +\frac{12}{\lambda_{1}\nu\Delta t}\underset{n=1}{\overset{N-1}\sum}\|e_{1h}^{n}-e_{1h}^{n-1}\|_{L_{f}^{2}}^{2}
        +\frac{2\Delta t}{\nu}\underset{n=1}{\overset{N-1}\sum}\|p^{n}-\lambda_{h}^{n}\|_{L_{f}^{2}}^{2}+
        \|e_{2h}^{0}\|_{L^{2}_{f}}^{2}.
       \end{equation}
       Using the Taylor series with time step $\Delta t,$ it is easy to see that
        \begin{equation}\label{74n}
        \left\|u^{n}-\frac{u^{n+1}+u^{n-1}}{2}\right\|_{W}^{2}\leq \frac{\Delta t^{4}}{4}\|u_{tt}^{n}\|_{W}^{2}.
        \end{equation}
        Furthermore, expanding the Taylor series with time step $\frac{\Delta t}{2},$ to get
        \begin{equation}\label{75n}
        u^{n}=u^{\overline{n}}+\frac{\Delta t}{2}u_{t}^{\overline{n}}+\frac{\Delta t^{2}}{8}u_{tt}^{\overline{n}}+\frac{\Delta t^{3}}{48}u_{ttt}^{\overline{n}}+O(\Delta t^{4});
        \text{\,\,\,\,\,\,}u^{n-1}=u^{\overline{n}}-\frac{\Delta t}{2}u_{t}^{\overline{n}}+\frac{\Delta t^{2}}{8}u_{tt}^{\overline{n}}-
        \frac{\Delta t^{3}}{48}u_{ttt}^{\overline{n}}+O(\Delta t^{4}).
        \end{equation}
        Tracking the infinitesimal term $O(\Delta t^{4})$ in $(\ref{75n})$, straightforward computations yield
        \begin{equation}\label{77n}
        \left\|u_{t}^{\overline{n}}-\frac{u^{n}-u^{n-1}}{\Delta t}\right\|_{L_{f}^{2}}^{2}\leq \frac{\Delta t^{4}}{24^{2}}\|u_{ttt}^{\overline{n}}\|_{L_{f}^{2}}^{2}.
        \end{equation}
        Similar to $(\ref{40n}),$ it is easy to see that
         \begin{equation}\label{78n}
         \frac{1}{\Delta t}\underset{n=1}{\overset{N-1}\sum}\left\|e_{1h}^{n}-e_{1h}^{n-1}\right\|_{L_{f}^{2}}^{2}\leq
         \||e_{1h,t}|\|_{L^{2}(0,T;W)}^{2}.
         \end{equation}
         Furthermore, utilizing the inequality $\|u+v\|^{2}\leq 2(\|u\|^{2}+\|v\|^{2}),$ it comes from relation $(\ref{44n})$
         that
         \begin{equation}\label{79n}
         \Delta t\underset{n=1}{\overset{N-1}\sum}\left\|e_{1h}^{n+1}+e_{1h}^{n-1}\right\|_{W}^{2}\leq
         4\nu\||\nabla e_{1h}|\|_{L^{2}(0,T;W)}^{2}.
         \end{equation}
         In way similar,
        \begin{equation}\label{80n}
        \Delta t^{3}\underset{n=1}{\overset{N-1}\sum}\|e_{2h,tt}^{n}\|_{L_{f}^{2}}^{2}\leq \Delta t^{2}\||e_{2h,tt}|\|_{L^{2}(0,T;W)}^{2}
          \text{\,\,\,\,\,and\,\,\,\,\,}\Delta t\underset{n=1}{\overset{N-1}\sum}\|p^{n}-\lambda_{h}^{n}\|_{L_{f}^{2}}^{2}\leq
         \Delta t^{2}\||p-\lambda_{h}|\|_{L^{2}(0,T;W)}^{2}.
        \end{equation}
        From relation $(\ref{84b}),$ we have that
        \begin{equation}\label{50nn}
        \|e_{2h}^{0}\|_{L^{2}_{f}}^{2}\leq\||e_{2h}|\|_{L^{\infty}(0,T;W)}^{2}.
        \end{equation}
        This fact, combined with estimates $(\ref{73n})$ and $(\ref{74n})$ together with $(\ref{77n})$-$(\ref{50nn})$ give
       \begin{equation*}
        \|e_{2h}^{N}\|_{L^{2}_{f}}^{2}+\frac{\Delta t}{2}\underset{n=1}{\overset{N-1}\sum}\left\|e_{2h}^{n+1}
        +e_{2h}^{n-1}\right\|_{W}^{2}\leq \widetilde{C}_{1}\left\{\||e_{2h}|\|_{L^{\infty}(0,T;W)}^{2}
        +\Delta t^{5}\underset{n=1}{\overset{N-1}\sum}\left(\|u_{tt}^{n}\|_{W}^{2}+\|u_{ttt}^{\overline{n}}
        \|_{L_{f}^{2}}^{2}\right)+\right.
       \end{equation*}
       \begin{equation*}
        \||\nabla e_{1h}|\|_{L^{2}(0,T;W)}^{2}+\||e_{1h,t}|\|_{L^{2}(0,T;W)}^{2}+\Delta t^{2}
        \||e_{2h,tt}|\|_{L^{2}(0,T;W)}^{2}+\Delta t^{2}\||p-\lambda_{h}|\|_{L^{2}(0,T;W)}^{2}+
       \end{equation*}
       \begin{equation}\label{81n}
        \left.\Delta t\underset{n=1}{\overset{N-1}\sum}\|e_{uH}^{n}\|_{W}^{2}\left\{\|u^{n}\|_{W}^{2}
        +\|u_{H}^{n}\|_{W}^{2}\right\}\right\},
       \end{equation}
       where we absorbed all constants into a constant $\widetilde{C}_{1}.$ Plugging $(\ref{43})$ and $(\ref{84b})$-$(\ref{85b}),$ simple calculations provide
       \begin{equation*}
        \Delta t\underset{n=1}{\overset{N-1}\sum}\left\{\|u^{n}\|_{W}^{2}+\|u_{H}^{n}\|_{W}^{2}\right\}
        \|e_{uH}^{n}\|_{W}^{2}\leq \nu\Delta t\left(\||\nabla u|\|_{L^{\infty}(0,T;W)}^{2}+\||\nabla u_{H}|\|_{L^{\infty}(0,T;W)}^{2}\right)\underset{n=1}{\overset{N-1}\sum}\|e_{uH}^{n}\|_{W}^{2}\leq
       \end{equation*}
       \begin{equation*}
       \widetilde{C}_{2}\left(\||\nabla u|\|_{L^{\infty}(0,T;W)}^{2}+\||\nabla u_{H}|\|_{L^{\infty}(0,T;W)}^{2}\right)\left\{H^{2r}\left(H^{2}\||u|\|_{L^{\infty}(0,T;W^{r+1})}^{2}
       +\||u|\|_{L^{2}(0,T;W^{r+1})}^{2}+\right.\right.
        \end{equation*}
        \begin{equation}\label{82n}
        \left.\left.H^{2}\||u_{t}|\|_{L^{2}(0,T;W^{r+1})}^{2}\right)+\Delta t^{2}
        \left(\||u_{tt}|\|_{L^{2}(0,T;W^{r+1})}^{2}+\Delta t^{2}\||u_{ttt}|\|_{L^{2}(0,T;W^{r+1})}^{2}\right)
        +\||\nabla u|\|_{L^{2}(0,T;W^{r+1})}^{2}\right\},
        \end{equation}
        for every $r\in\mathbb{N},$ where all the constants are absorbed into a new constant $\widetilde{C}_{2}.$ Furthermore
        \begin{equation}\label{83n}
        \Delta t^{5}\underset{n=1}{\overset{N-1}\sum}\left(\|u_{tt}^{n}\|_{W}^{2}+\|u_{ttt}^{\overline{n}}
        \|_{L_{f}^{2}}^{2}\right)\leq \Delta t^{4}\left(\nu\||\nabla u_{tt}|\|_{L^{2}(0,T;W)}^{2}
        +\||u_{ttt}|\|_{L^{2}(0,T;W)}^{2}\right).
        \end{equation}
         Substituting inequalities $(\ref{83n})$ and $(\ref{82n})$ into $(\ref{81n})$ and absorbing all the constants into a constant $\widetilde{C}_{3}$ results in
       \begin{equation*}
        \|e_{2h}^{N}\|_{L^{2}_{f}}^{2}+\frac{\Delta t}{2}\underset{n=1}{\overset{N-1}\sum}\left\|e_{2h}^{n+1}
        +e_{2h}^{n-1}\right\|_{W}^{2}\leq \widetilde{C}_{3}\left\{\||e_{2h}|\|_{L^{\infty}(0,T;W)}^{2}+\||\nabla e_{1h}|\|_{L^{2}(0,T;W)}^{2}+\||e_{1h,t}|\|_{L^{2}(0,T;W)}^{2}+\right.
       \end{equation*}
       \begin{equation*}
       \Delta t^{2}\||e_{2h,tt}|\|_{L^{2}(0,T;W)}^{2}+\Delta t^{2}\||p-\lambda_{h}|\|_{L^{2}(0,T;W)}^{2}+
       \Delta t^{4}\left[\||\nabla u_{tt}|\|_{L^{2}(0,T;W)}^{2}+\||u_{ttt}|\|_{L^{2}(0,T;W)}^{2}\right]
       \end{equation*}
       \begin{equation*}
        +\Delta t^{2}\left(\||\nabla u|\|_{L^{\infty}(0,T;W)}^{2}+\||\nabla u_{H}|\|_{L^{\infty}
        (0,T;W)}^{2}\right)\left(\||u_{tt}|\|_{L^{2}(0,T;W^{r+1})}^{2}+\Delta t^{2}\||u_{ttt}|\|_{L^{2}
        (0,T;W^{r+1})}^{2}\right)+
       \end{equation*}
       \begin{equation*}
       \left(\||\nabla u|\|_{L^{\infty}(0,T;W)}^{2}+\||\nabla u_{H}|\|_{L^{\infty}(0,T;W)}^{2}\right)
       \left[H^{2r}\left(H^{2}\||u|\|_{L^{\infty}(0,T;W^{r+1})}^{2}+\||u|\|_{L^{2}(0,T;W^{r+1})}^{2}+\right.\right.
        \end{equation*}
        \begin{equation}\label{84n}
        \left.\left.\left.H^{2}\||u_{t}|\|_{L^{2}(0,T;W^{r+1})}^{2}\right)+\||\nabla u|\|_{L^{2}(0,T;W^{r+1})}^{2}\right]\right\}.
        \end{equation}
       Let us recall that the exact errors terms check $e_{uh}^{m}=u^{m}-u_{h}^{m}=e_{1h}^{m}+e_{2h}^{m}.$ Utilizing the triangular inequality, it comes from estimate $(\ref{84n}),$ $(\ref{84b}),$ $(\ref{85b})$ and estimate $\frac{1}{2}\|u+v\|^{2}\leq\|u\|^{2}+\|v\|^{2} $ that
       \begin{equation*}
        \|e_{uh}^{N}\|_{L^{2}_{f}}^{2}+\Delta t\underset{n=1}{\overset{N-1}\sum}\left\|e_{uh}^{n+1}
        +e_{uh}^{n-1}\right\|_{W}^{2}\leq \widetilde{C}_{4}\left\{\||e_{2h}|\|_{L^{\infty}(0,T;W)}^{2}+\||\nabla e_{1h}|\|_{L^{2}(0,T;W)}^{2}+\||e_{1h,t}|\|_{L^{2}(0,T;W)}^{2}+\right.
       \end{equation*}
       \begin{equation*}
       \Delta t^{2}\||e_{2h,tt}|\|_{L^{2}(0,T;W)}^{2}+\Delta t^{2}\||p-\lambda_{h}|\|_{L^{2}(0,T;W)}^{2}+
       \Delta t^{4}\left[\||\nabla u_{tt}|\|_{L^{2}(0,T;W)}^{2}+\||u_{ttt}|\|_{L^{2}(0,T;W)}^{2}\right]+
       \end{equation*}
       \begin{equation*}
        \Delta t^{2}\left(\||\nabla u|\|_{L^{\infty}(0,T;W)}^{2}+\||\nabla u_{H}|\|_{L^{\infty}(0,T;W)}^{2}
        \right)\left(\||u_{tt}|\|_{L^{2}(0,T;W^{r+1})}^{2}+\Delta t^{2}\||u_{ttt}|\|_{L^{2}(0,T;W^{r+1})}^{2}
        \right)+
       \end{equation*}
       \begin{equation*}
       \left(\||\nabla u|\|_{L^{\infty}(0,T;W)}^{2}+\||\nabla u_{H}|\|_{L^{\infty}(0,T;W)}^{2}\right)
       \left[H^{2r}\left(H^{2}\||u|\|_{L^{\infty}(0,T;W^{r+1})}^{2}+\||u|\|_{L^{2}(0,T;W^{r+1})}^{2}
       +\right.\right.
        \end{equation*}
        \begin{equation}\label{85n}
        \left.\left.\left.H^{2}\||u_{t}|\|_{L^{2}(0,T;W^{r+1})}^{2}\right)+\||\nabla u|\|_{L^{2}(0,T;W^{r+1})}^{2}\right]+\||e_{1h}|\|_{L^{\infty}(0,T;W)}^{2}\right\},
        \end{equation}
        where we absorbed all the constants into a new constant $\widetilde{C}_{4}.$ In addition, estimate $(\ref{85n})$ holds for any $\widetilde{u}\in V_{h}$ and $\lambda_{h}\in Q_{h}.$ From equation $(\ref{26}),$ $V_{h}$ is a subspace of $W_{h},$ so it comes from inequality $(\ref{83b})$ that the infimum over the space $W_{h}$ is an upper bound of the infimum over the subspace $V_{h}.$ Hence, the following estimate holds for some positive constant $\widetilde{C}_{5}$
       \begin{equation*}
        \|e_{uh}^{N}\|_{L^{2}_{f}}^{2}+\Delta t\underset{n=1}{\overset{N-1}\sum}\left\|e_{uh}^{n+1}
        +e_{uh}^{n-1}\right\|_{W}^{2}\leq \widetilde{C}_{5}\left\{\underset{\widetilde{u}\in W_{h}}\inf\||\widetilde{u}-u_{h}|\|_{L^{\infty}(0,T;W)}^{2}+\underset{\widetilde{u}\in W_{h}}\inf\||\nabla(u-\widetilde{u})|\|_{L^{2}(0,T;W)}^{2}+\right.
       \end{equation*}
       \begin{equation*}
       \Delta t^{2}(\underset{\widetilde{u}\in W_{h}}\inf\||(\widetilde{u}-u_{h})_{tt}|\|_{L^{2}(0,T;W)}^{2}+
       \underset{\widetilde{\lambda_{h}}\in Q_{h}}\inf\||p-\lambda_{h}|\|_{L^{2}(0,T;W)}^{2})+
       \Delta t^{4}\left[\||\nabla u_{tt}|\|_{L^{2}(0,T;W)}^{2}+\||u_{ttt}|\|_{L^{2}(0,T;W)}^{2}\right]
       \end{equation*}
       \begin{equation*}
        +\Delta t^{2}\left(\||\nabla u|\|_{L^{\infty}(0,T;W)}^{2}+\||\nabla u_{H}|\|_{L^{\infty}(0,T;W)}^{2}\right)
        \left(\||u_{tt}|\|_{L^{2}(0,T;W^{r+1})}^{2}+\Delta t^{2}\||u_{ttt}|\|_{L^{2}(0,T;W^{r+1})}^{2}\right)+
       \end{equation*}
       \begin{equation*}
       \left(\||\nabla u|\|_{L^{\infty}(0,T;W)}^{2}+\||\nabla u_{H}|\|_{L^{\infty}(0,T;W)}^{2}\right)\left[H^{2r}\left(H^{2}\||u|\|_{L^{\infty}
       (0,T;W^{r+1})}^{2}+\||u|\|_{L^{2}(0,T;W^{r+1})}^{2}+\right.\right.
        \end{equation*}
        \begin{equation}\label{86n}
        \left.\left.\left.H^{2}\||u_{t}|\|_{L^{2}(0,T;W^{r+1})}^{2}\right)+\||\nabla u|\|_{L^{2}(0,T;W^{r+1})}^{2}\right]+\underset{\widetilde{u}\in W_{h}}\inf\||u-\widetilde{u}|\|_{L^{\infty}(0,T;W)}^{2}+\underset{\widetilde{u}\in W_{h}}\inf\||(u-\widetilde{u})_{t}|\|_{L^{2}(0,T;W)}^{2}\right\},
        \end{equation}
        Applying estimates $(\ref{80b})$-$(\ref{82b})$ together with equalities $\underset{\widetilde{u}\in W_{h}}\inf\||\widetilde{u}-u_{h}|\|_{L^{\infty}(0,T;W)}^{2}=\underset{\widetilde{u}\in W_{h}}
        \inf\||(\widetilde{u}-u_{h})_{tt}|\|_{L^{\infty}(0,T;W)}^{2}\\=0,$ relation $(\ref{86n})$ becomes
       \begin{equation*}
        \|e_{uh}^{N}\|_{L^{2}_{f}}^{2}+\Delta t\underset{n=1}{\overset{N-1}\sum}\left\|e_{uh}^{n+1}
        +e_{uh}^{n-1}\right\|_{W}^{2}\leq\widehat{C}\left\{h^{2r}\left(h^{2}\||u|\|_{L^{\infty}(0,T;W^{r+1})}^{2}
        +\||u|\|_{L^{2}(0,T;W^{r+1})}^{2}+\right.\right.
       \end{equation*}
       \begin{equation*}
       \left.h^{2}\||u_{t}|\|_{L^{2}(0,T;W^{r+1})}^{2}\right)+\Delta t^{2}h^{2r+2}\||p|\|_{L^{2}(0,T;W)}^{2}
       +\Delta t^{4}\left[\||\nabla u_{tt}|\|_{L^{2}(0,T;W)}^{2}+\||u_{ttt}|\|_{L^{2}(0,T;W)}^{2}\right]
       \end{equation*}
       \begin{equation*}
        +\Delta t^{2}\left(\||\nabla u|\|_{L^{\infty}(0,T;W)}^{2}+\||\nabla u_{H}|\|_{L^{\infty}(0,T;W)}^{2}\right)\left(\||u_{tt}|\|_{L^{2}(0,T;W^{r+1})}^{2}+\Delta t^{2}
        \||u_{ttt}|\|_{L^{2}(0,T;W^{r+1})}^{2}\right)+
       \end{equation*}
       \begin{equation*}
       H^{2r}\left(\||\nabla u|\|_{L^{\infty}(0,T;W)}^{2}+\||\nabla u_{H}|\|_{L^{\infty}(0,T;W)}^{2}\right)
       \left(H^{2}\||u|\|_{L^{\infty}(0,T;W^{r+1})}^{2}+\||u|\|_{L^{2}(0,T;W^{r+1})}^{2}+\right.
        \end{equation*}
        \begin{equation*}
         \left.\left.H^{2}\||u_{t}|\|_{L^{2}(0,T;W^{r+1})}^{2}\right)+\||\nabla u|\|_{L^{2}
         (0,T;W^{r+1})}^{2}\left(\||\nabla u|\|_{L^{\infty}(0,T;W)}^{2}+\||\nabla u_{H}|\|_{L^{\infty}(0,T;W)}^{2}\right)\right\},
        \end{equation*}
        where we absorbed all the constants into a constant $\widehat{C}.$ This completes the proof of Theorem $\ref{t2}.$
         \end{proof}

         \section{Numerical experiments}\label{IV}
          This section considers a wide set of numerical experiments in two-dimensional case. We stress that in this situation we obtain satisfactory results, so our algorithm performances are not worse for multidimensional problems. Specifically, we consider a simple example which is a nonphysical example with the pressure $p=0,$ together with the other ones that is a practical example, case $p\neq0.$ Using the exact solutions introduced in \cite{hmr,br}, we confirm the predicted convergence rate from the theory. Furthermore, we look at errors over long time intervals to see the convergence rate of our proposed method for the parameters $\nu$ smaller than covered by the theory. To demonstrate this convergence, we list in Table $\ref{tab:1}$ the errors between the computed solution and the exact one with varying spacing $h$ and time step $\Delta t.$ We look at the error estimates of the method for the parameters $\nu\in\{1,10^{-1}\}$ and $T=2^{2}.$ Finally, the numerical evidences are performed using the Matlab building function $R2009a.$\\

       $\bullet$ \textbf{Test $1.$} we consider an artificial model on the two-dimensional unit square $\Omega_{f}=(0,1)^{2}$ and final time $T=2^{2}.$ We set $\nu=10^{-1}$ and we choose the force $f$ in such a way that the exact solution $(u=(u_{1},u_{2}),p)^{'}$ is given by
       \begin{eqnarray*}
         u_{1}(t,x,y) &=& \sin^{2}(\pi x)\sin(\pi y)\cos(\pi y)\sin(t),\\
         u_{2}(t,x,y) &=& -\sin(\pi x)\cos(\pi x)\sin^{2}(\pi y)\sin(t),\\
         p(t,x,y) &=& \sin(\pi x)\cos(\pi x)\sin(\pi y)\cos(\pi y)\sin(t).
       \end{eqnarray*}
       The initial and boundary conditions are set to
       \begin{equation*}
        u=0,\text{\,\,in\,\,}\{0\}\times\Omega_{f},\text{\,\,\,\,}u=0,\text{\,\,on\,\,}(0,1)\times\partial\Omega_{f}.
       \end{equation*}
       The finite element discretization uses a quadrilateral mesh with $Q_{2}/(Q_{1}+Q_{0})$ element. In the $Q_{2}/(Q_{1}+Q_{0})$ element, piecewise bilinear functions on quadrilaterals are used to approximate the velocity $u.$ To analyze the convergence rate of our numerical scheme, we take the mesh size and time step: $h,\text{\,}\Delta t\in\{\frac{1}{2^{2}},\frac{1}{2^{3}},\frac{1}{2^{4}}, \frac{1}{2^{5}},\frac{1}{2^{6}},\frac{1}{2^{7}},\frac{1}{2^{8}},
       \frac{1}{2^{9}},\frac{1}{2^{10}}\},$ by a mid-point refinement. We set $\Delta t=h$ and we compute the error estimates: $E^{N}(u),$ $E^{N}\left(\nabla u\right)$ and $E^{N}(p)$ related to the rapid solver method to see that the algorithm is of second order accuracy in both time and space. In addition, we plot the errors versus $n.$ From this analysis, MCRS is both efficient and effective than the two-level finite element Galerkin approach. In fact the two-level finite element Galerkin scheme is of first order convergent (see \cite{hmr}, Theorem $6.6,$ p. $19$). Furthermore, when $h$ varies in the given range, we observe from Tables that the approximation errors $O(\Delta t^{\beta})+O(h^{\theta})$ are dominated by the $h$-terms $O(h^{\theta})$ (or $\Delta t$-terms $O(\Delta t^{\beta})$). So, the ratio $r^{m}_{(\cdot)},$ where $(\cdot)$ designates $u,$ $\nabla u,$ $p$ of the approximation errors on two adjacent mesh levels $\Omega_{2h}$ and $\Omega_{h}$ is approximately $(2h)^{\theta}/h^{\theta}=2^{\theta},$ where $m$ refers to the $W^{m}_{2}(\Omega_{f})$-error norm. Hence, we can simply use $r^{m}_{(\cdot)}$ to estimate the corresponding convergence rate with respect to $h=\Delta t.$ Define the norms for the errors, $E^{N}(u),$ $E^{N}\left(\nabla u\right),$ $E^{N}(p),$ for each $N\in\mathrm{N},$ as follows

         \begin{equation*}
            E^{N}(u)=\||u-u_{h}|\|_{L^{2}(0,T;W)}=\left[\Delta t\underset{n=0}{\overset{N}\sum}\|u^{n}-u_{h}^{n}\|_{L_{f}^{2}}^{2}\right]^{\frac{1}{2}};
            \text{\,\,\,}E^{N}(p)=\||p-p_{h}\||_{L^{2}\left(0,T;W_{2}^{0}\right)}=\left[\Delta t\underset{n=0}{\overset{N}\sum}
            \|p^{n}-p_{h}^{n}\|_{L_{f}^{2}}^{2}\right]^{\frac{1}{2}}.
         \end{equation*}
         \begin{equation*}
            E^{N}\left(\nabla u\right)=\||\nabla(u-u_{h})\||_{L^{2}(0,T;W)}=\left[\nu\Delta t\underset{n=0}{\overset{N}\sum}
            \|\nabla(u^{n}-u_{h}^{n})\|_{L_{f}^{2}}^{2}\right]^{\frac{1}{2}};\\
         \end{equation*}

          Surprisingly, it comes from the Tests (see Table $\ref{tab:1}$ and Figure $\ref{figure}$) that the rapid solver method is second order convergent both for the Stokes velocity ($u$) and the Stokes pressure ($p$).

          \begin{table}
           \caption{Analysis of convergence rates $O(h^{\theta}+\Delta t^{\beta})$ for MCRS by $r^{1}_{(\cdot)},$ with varying spacing $h$ and time step $\Delta t$ (with $\Delta t=h$).}
            \label{tab:1}
            $$\begin{tabular}{c c}
            \begin{tabular}{|c|c|c|c|c|c|c|}
            \noalign{\smallskip}\hline\noalign{\smallskip}
            $\Delta t$ & $E^{N}(u)$ & $r^{1}_{u}$ & $E^{N}(p)$ & $r^{1}_{p}$ &
            $E^{N}(\nabla u)$ & $r^{1}_{\nabla u}$ \\
            \hline
            $2^{-2}$ & 0.7512 &        & 0.3756 &        & 3.6558 &  \\
            \hline
            $2^{-3}$ & 0.3715 & 2.0221 & 0.1857 & 2.0226 & 2.5569 & 1.4298 \\
            \hline
            $2^{-4}$ & 0.1848 & 2.0103 & 0.0924 & 2.0097 & 1.7984 & 1.4218\\
            \hline
            $2^{-5}$ & 0.0921 & 2.0065 & 0.0461 & 2.0043 & 1.2684 & 1.4178\\
            \hline
            $2^{-6}$ & 0.0460 & 2.0022 & 0.0230 & 2.0043 & 0.8957  & 1.4161 \\
            \hline
            $2^{-7}$ & 0.0230 & 2.0000 & 0.0115 & 2.0000 & 0.6330 & 1.4150\\
            \hline
            $2^{-8}$ & 0.0115 & 2.0000 & 0.0057 & 2.0175 & 0.4474 & 1.4148\\
            \hline
            \hline
            $2^{-9}$ & 0.0057 & 2.0175 & 0.0029 & 1.9655 & 0.3163 & 1.4145\\
            \hline
            \hline
            $2^{-10}$& 0.0029 & 1.9655& 0.0014 & 2.0714 & 0.2237 & 1.4139\\
            \hline\noalign{\smallskip}
          \end{tabular} & \begin{tabular}{|c|c|c|c|c|}
            \noalign{\smallskip}\hline\noalign{\smallskip}
            $\Delta t$ & $E^{N}(u)$ & $r^{1}_{u}$ & $E^{N}(\nabla u)$ & $r^{1}_{\nabla u}$  \\
            \hline
            $2^{-2}$ & 1.6129 &        & 0.8321 &        \\
            \hline
            $2^{-3}$ & 0.7921 & 2.0362 & 0.4087 & 2.0360 \\
            \hline
            $2^{-4}$ & 0.3923 & 2.0191 & 0.2024 & 2.0193\\
            \hline
            $2^{-5}$ & 0.1952 & 2.0097 & 0.1007 & 2.0099\\
            \hline
            $2^{-6}$ & 0.0974 & 2.0041 & 0.0502 & 2.0060 \\
            \hline
            $2^{-7}$ & 0.0486 & 2.0041 & 0.0251 & 2.0000 \\
            \hline
            $2^{-8}$ & 0.0243 & 2.0000 & 0.0125 & 2.0080 \\
            \hline
            $2^{-9}$ & 0.0121 & 2.0083 & 0.0063 & 1.9841\\
            \hline
            $2^{-10}$& 0.0061 & 1.9836 & 0.0031 & 2.0323 \\
            \hline\noalign{\smallskip}
          \end{tabular} \\
          \text{\,}\\
                     Test 1: $\nu=10^{-1}$ and $\lambda_{1}=1$ & Test 2: $\nu=\lambda_{1}=1$ \\
            \end{tabular}$$
          \end{table}
          \text{\,}\\

       $\bullet$ \textbf{Test $2.$} Now, let $\Omega_{f}$ be the unit square $(0,1)\times(0,1)$ and $T$ be the final time $T=2^{2}.$ We assume that $\nu=\lambda_{1}=1,$ and we choose the force $f$ in such a way that the exact solution $u=(u_{1},u_{2})$ is given by
       \begin{eqnarray*}
         u_{1}(t,x,y) &=& 10x^{2}(x-1)^{2}y(y-1)(2y-1)\cos(t),\\
         u_{2}(t,x,y) &=& -10x(x-1)(2x-1)y^{2}(y-1)^{2}\cos(t) .
       \end{eqnarray*}
       The initial and boundary conditions are set to
       \begin{equation*}
        u=0,\text{\,\,in\,\,}\{0\}\times\Omega_{f},\text{\,\,\,\,}u=0,\text{\,\,on\,\,}(0,1)\times\partial\Omega_{f}.
       \end{equation*}
       Similar to \textbf{Test $1,$} we take the mesh size and time step: $h,\text{\,}\Delta t\in\{\frac{1}{2^{2}},\frac{1}{2^{3}},\frac{1}{2^{4}},
       \frac{1}{2^{5}},\frac{1}{2^{6}},\frac{1}{2^{7}},\frac{1}{2^{8}},\frac{1}{2^{9}},\frac{1}{2^{10}}\},$ by a mid-point refinement. We set $\Delta t=h$ and we compute the error estimates: $E^{N}(u),$ $E^{N}\left(\nabla u\right)$ and $E^{N}(p)$ related to the rapid solver method to see that the algorithm is of second order accuracy in both time and space (see Table $\ref{tab:1}$ and Figure $\ref{figure}$). In addition, we plot the errors versus $n.$ From our analysis, it is obvious that the two-level hybrid scheme is both efficient and effective than the two-level finite element Galerkin approach which is first order accuracy.\\

         \section{General conclusion and future works}\label{V}
          We have studied in detail the error estimates and the rate of convergence of MCRS algorithm for $2$D incompressible Navier-Stokes model over long time intervals. The theoretical result has suggested that our method is convergent and second order accurate in both space and time for Stokes parameters ($u$ and $p$). This convergence rate is confirmed by some numerical experiments (see Table $\ref{tab:1}$), but this also was predicted in a previous paper \cite{en}. Numerical evidences also suggest that the new algorithm is fast and robust tools for the integration of general systems of mixed model. The analysis of stability and error estimates of the two-level hybrid scheme for mixed Stokes-Darcy problem will be the subject of our future investigations.\\

         \begin{figure}
         \begin{center}
          Convergence rate of a two-level hybrid method.
          \begin{tabular}{c c}
          \psfig{file=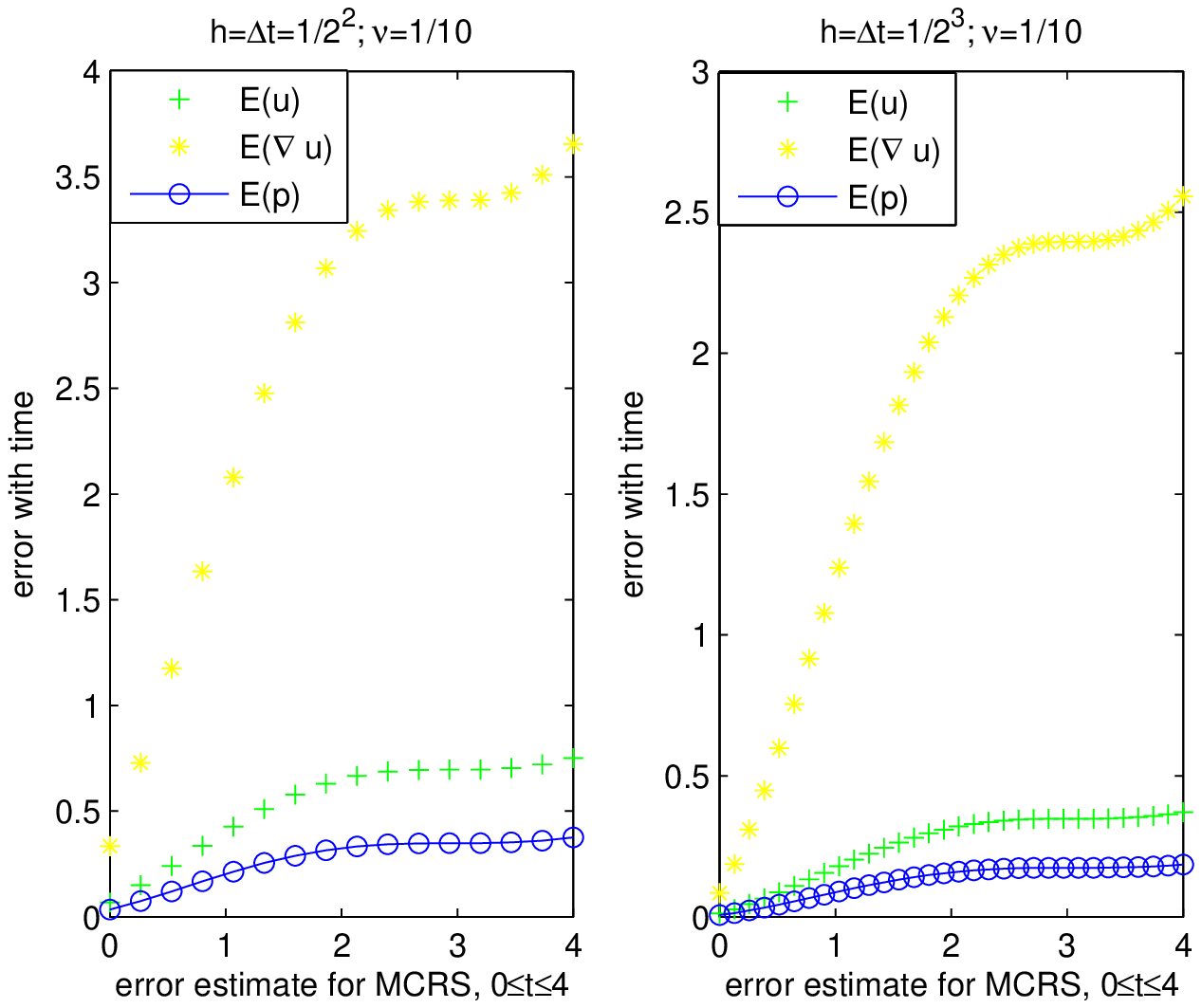,width=6.5cm} & \psfig{file=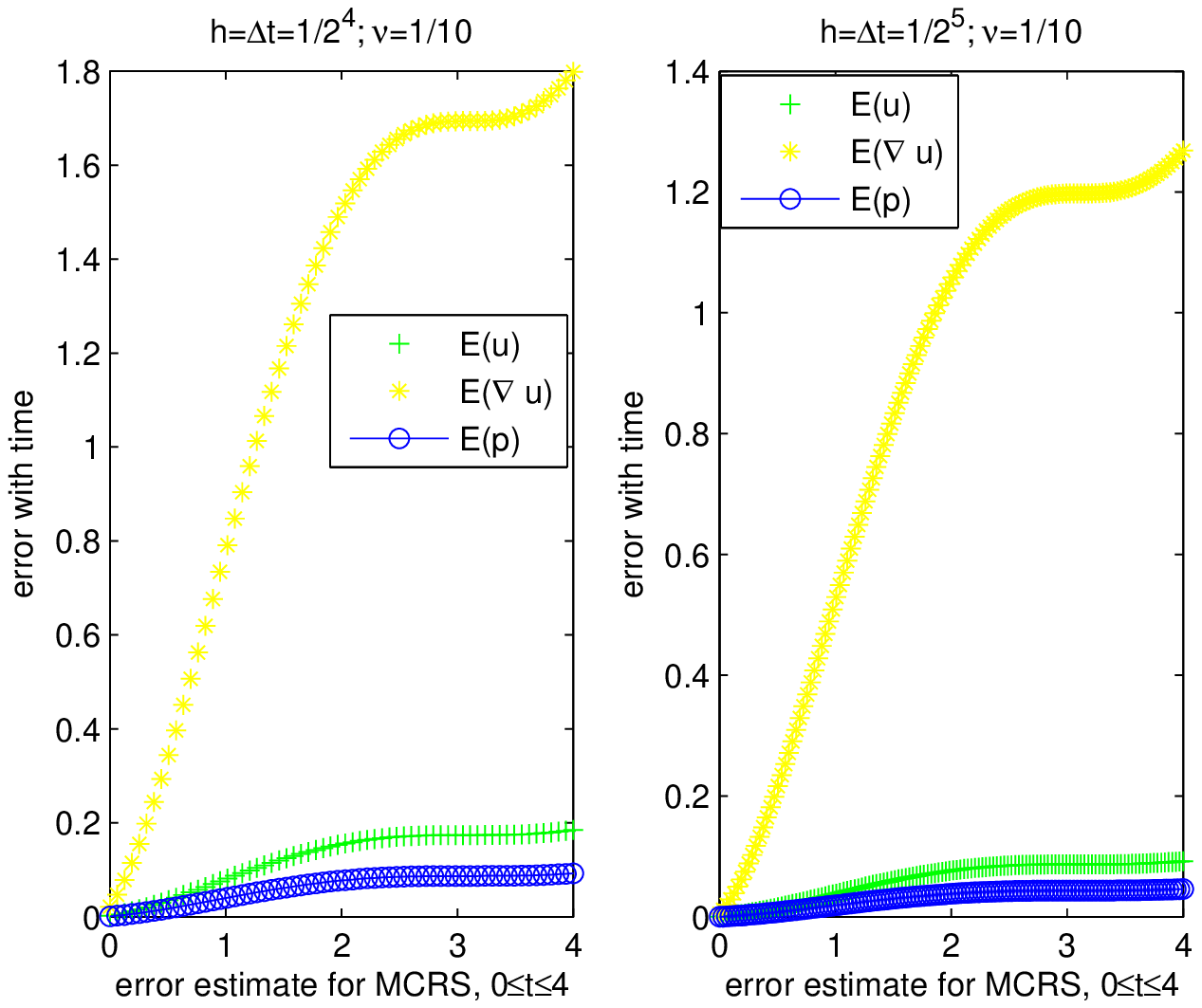,width=6.5cm}\\
          \psfig{file=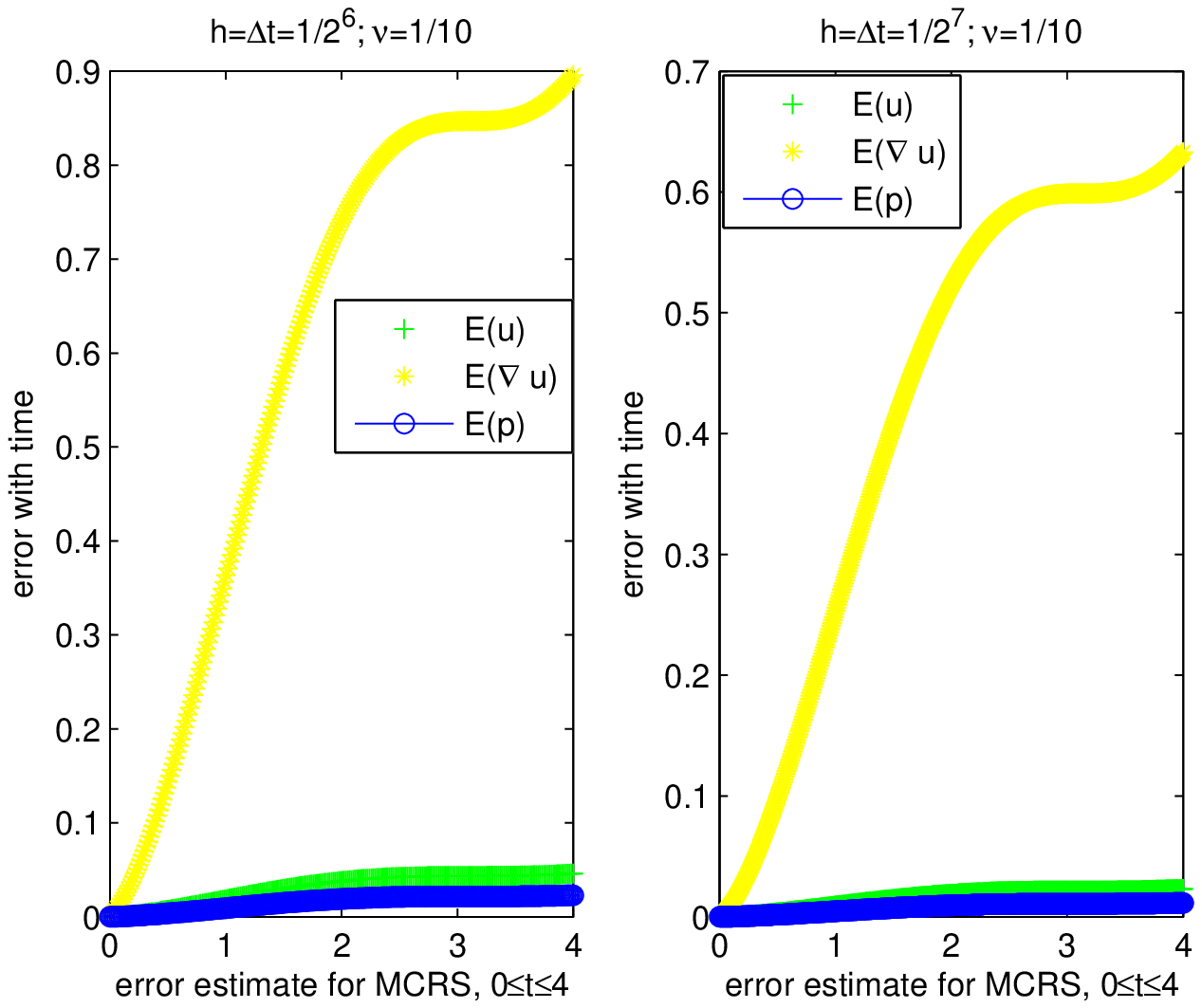,width=6.5cm} & \psfig{file=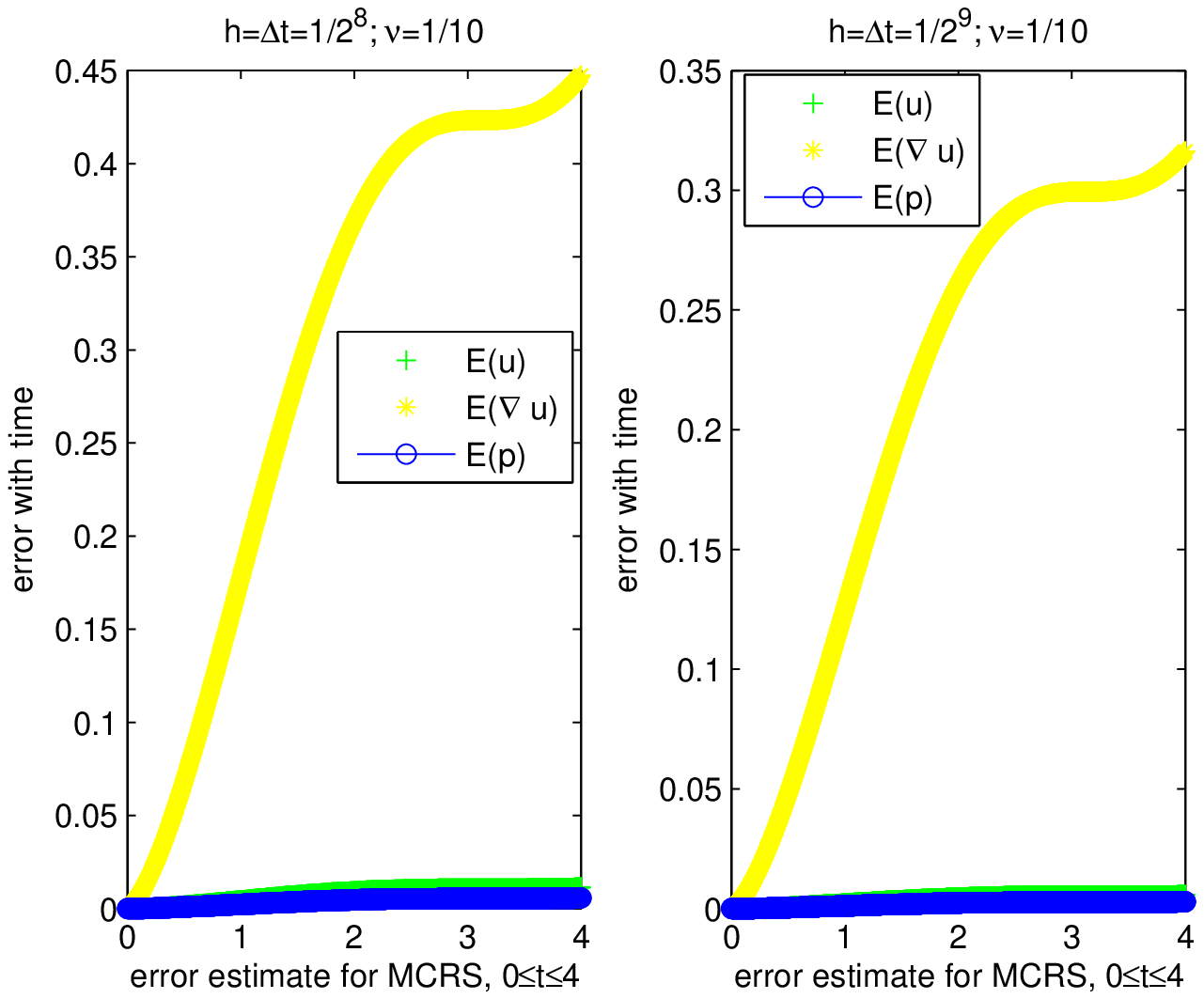,width=6.5cm}\\
          \textbf{Test 1:} $\nu=10^{-1}$ and $\lambda_{1}=1$                           \\
          \text{\,}   \\
          \psfig{file=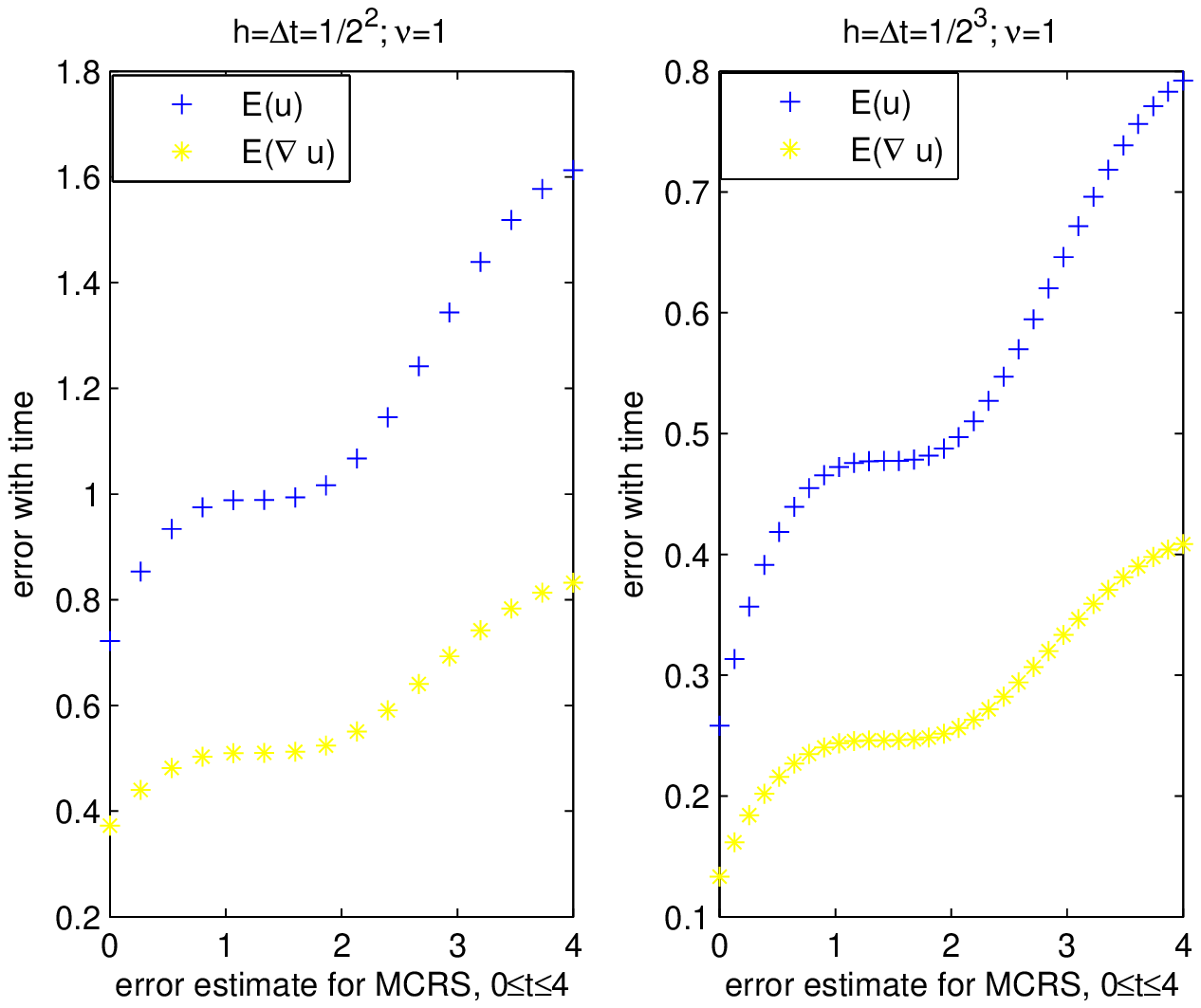,width=6.5cm} & \psfig{file=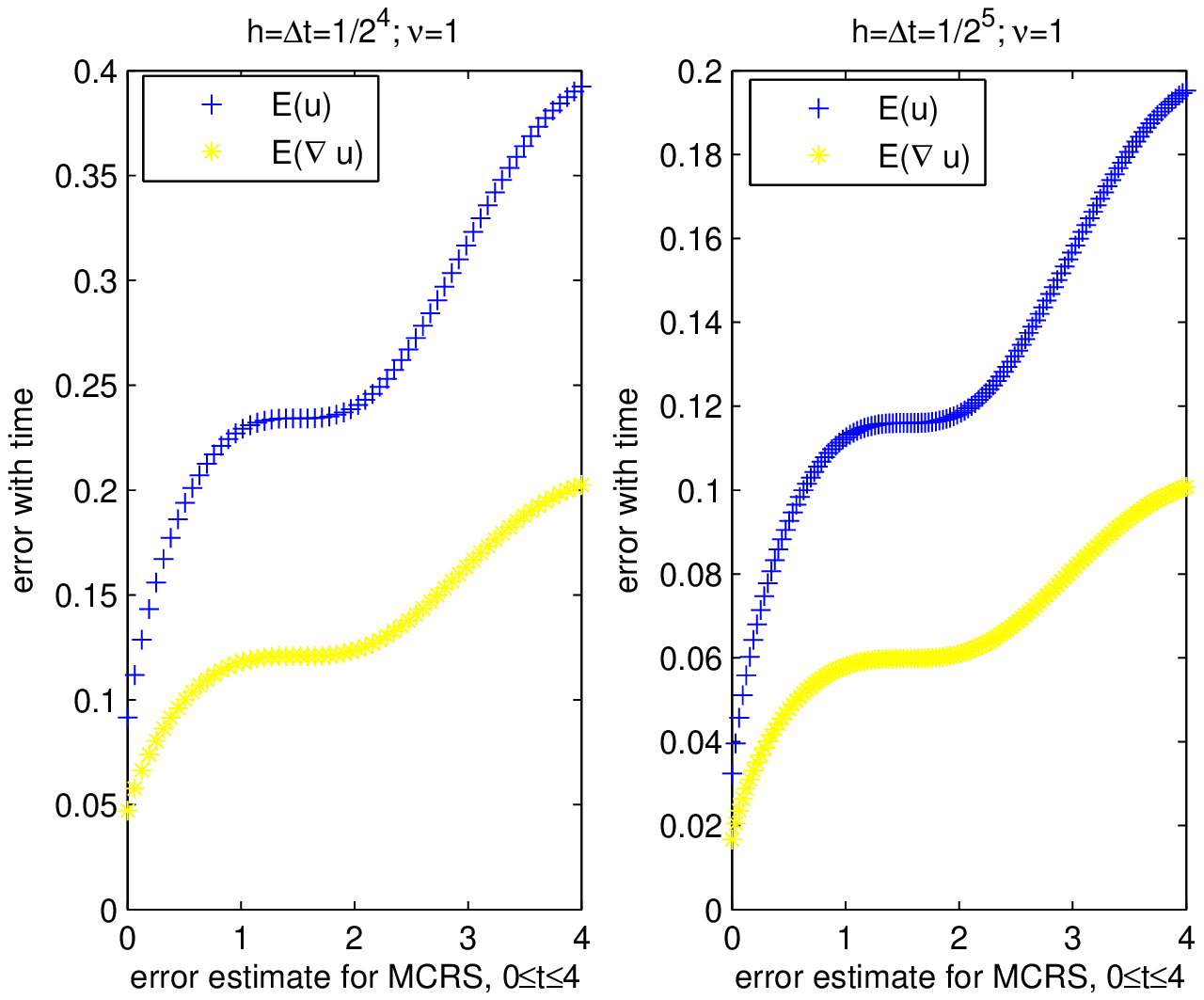,width=6.5cm}\\
          \psfig{file=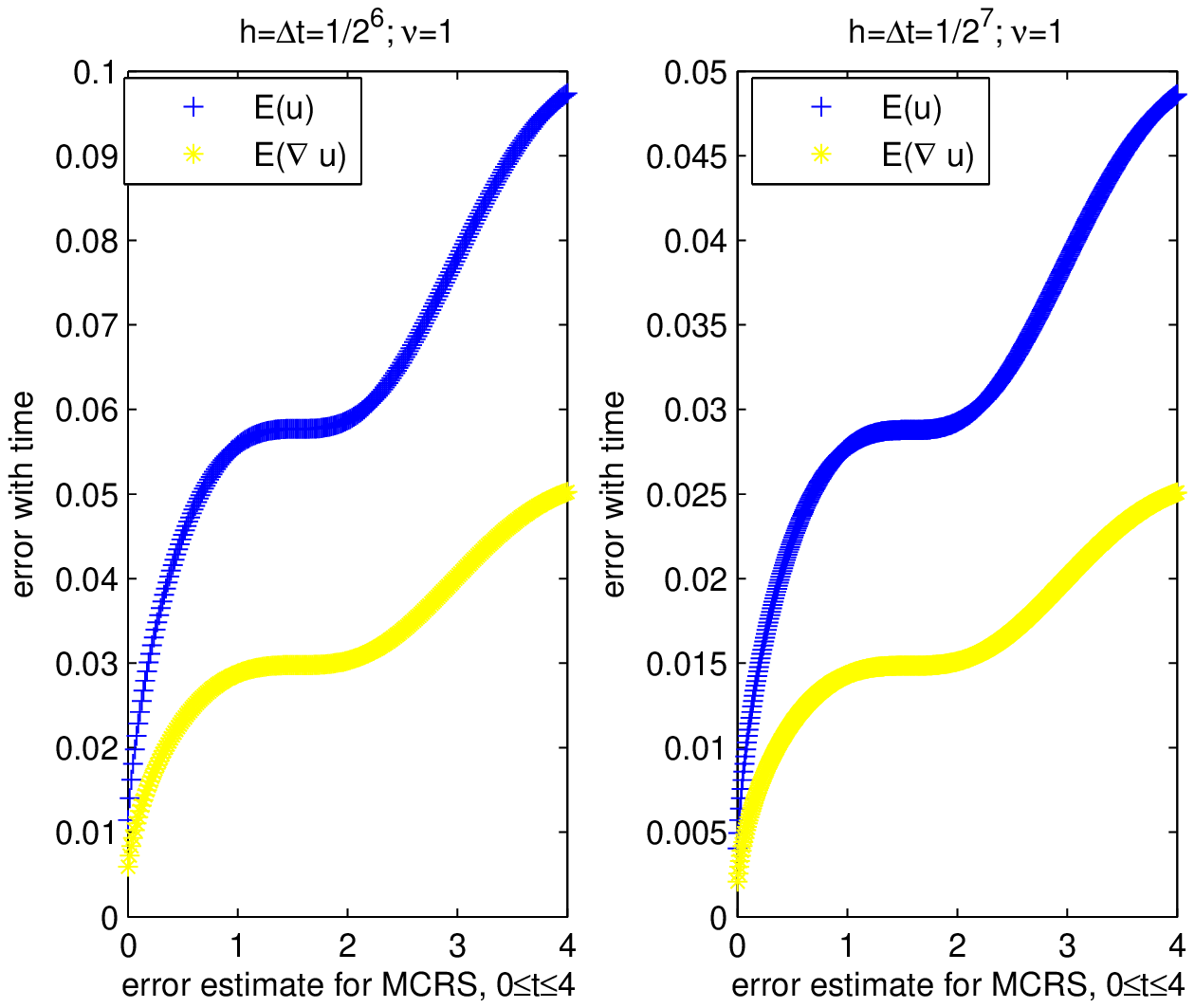,width=6.5cm} & \psfig{file=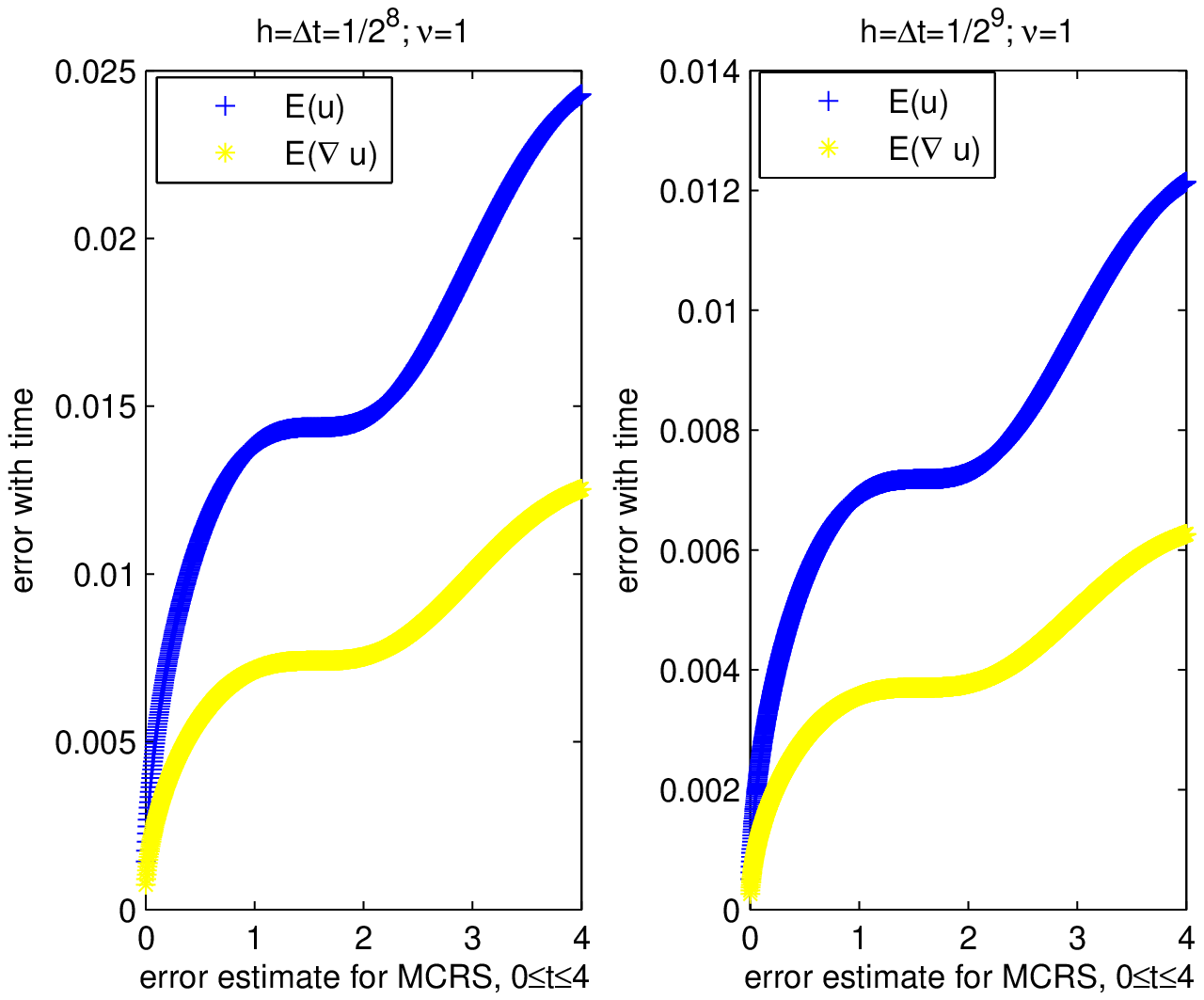,width=6.5cm}\\
          \textbf{Test 2}: $\nu=\lambda_{1}=1$                                   \\
         \end{tabular}
        \end{center}
         \caption{Analysis of error estimates for MCRS}
          \label{figure}
          \end{figure}

          \newpage


\begin{thebibliography}{99}

   \bibitem{1ynh}
    A. A. O. Ammi and M. Marion. "Nonlinear Galerkin methods and mixed finite elements: two-grid algorithms for the Navier-Stokes equations", Numer. Math., $68$ $(1994)$ $189$-$213.$

    \bibitem{apt}
    F. A. Anderson, R. H. Pletcher and J. C. Tannehill. "Computational fluid mechanics and Heat Transfer". Second Edition,   Taylor and Francis, New York, $(1997).$

    \bibitem{2mx}
    O. Axelsson and I. E. Kaporin. "Minimum residual adaptative multilevel finite element procedure for the solution of    nonlinear stationary problems", SIAM J. Num. Anal., $35$ $(1998)$ $1213$-$1229.$

    \bibitem{2ynh}
    J. Bercovier and O. Pironneau. "Error estimates for finite element solution of the Stokes probelem in the primitive variables", Numer. Math., $33$ $(1979)$ $211$-$224.$

    \bibitem{br}
    M. Besier and R. Rannacher. "Goal-oriented space-time adaptivity in the finite element Galerkin method for the compution of nonstationary incompressible flow", Int. J. Numer. Meth. Fluids, $(2011)$ $00:1-29.$

    \bibitem{7ynh}
    V. Girault and J. L. Lions. "Two-grid finite element schemes for the transient Navier-Stokes problem", Math. Modl. Num. Anal., $35$ $(2001)$ $945$-$980.$

    \bibitem{8ynh}
    V. Girault and P. A. Raviart. "Finite Element Approximations of Navier-Stokes equations", Springer-Verlag, Berlin, New York, $(1981).$

    \bibitem{9ynh}
    P. Grisvard, Elliptic Problem in Nonsmooth Domains, Pitman Publishing Inc., London, $(1985).$

    \bibitem{hmr}
    Y. He, H. Miao and C. Ren. "A two-level finite element Galerkin method for the nonstationary Navier-Stokes equations $II:$ Spatial discretization", J. Comput. Math., $22$ $(2004)$ $33$-$54.$

    \bibitem{11ynh}
    J. G. Heywood and R. Rannacher. "Finite element approximation of the nonstationary Navier-Stokes problem. $I.$ Regularity of solutions and second-order error estimates for spatial discretization", SIAM J. Num. Anal., $19$ $(1982)$ $275$-$311.$

    \bibitem{15ynh}
    W. Layton. "A two-level discretization method for the Navier-Stokes equations", Comput. Math. appl., $26$ $(1993)$ $33$-$38.$

    \bibitem{mc1}
    R. W. MacCormack. "An efficient numerical method for solving the time-dependent compressible Navier-Stokes equations at high Reynolds numbers", NASA TM, X-$73$-$129$ $(1976)$.

    \bibitem{20hmr}
    M. Marion and J. Xu. "Error estimates on a new nonlinear Galerkin methods based on two-grid finite elements", SIAM J. Num. Anal., $32$ $(1995)$ $1170$-$1184.$

    \bibitem{mx}
    M. Mu and J. Xu. "A two-grid method of a mixed Stokes-Darcy model for coupling fluid flow with porous media flow",
    SIAM J. Numer. Anal., $45$ $(2007)$ $1801$-$1813.$

    \bibitem{65rr}
    S. M\"{u}ller, A. Prohl, R. Rannacher and S. Turek. "Implicit timediscretization of the nonstationary incompressible Navier-Stokes equations, Proc. Workshop "Fast Solvers for Flow Problems"", Kiel, Jan. $14$-$16,$ $1994$ (W. Hackbusch and G. Wittum, eds.), Vol. $49$ $ (1994)$ $175$-$191,$ NNFM, Vieweg, Braunschweig.

    \bibitem{en1}
    E. Ngondiep. "Stability analysis of MacCormack rapid solver method for evolutionary Stokes-Darcy problem", J. Comput. Appl. Math. $345(2019)$, $269$-$285$, $17$ pages.

    \bibitem{en2}
    E. Ngondiep. "Long Time Stability and Convergence Rate of MacCormack Rapid Solver Method for Nonstationary Stokes-Darcy Problem", Comput. Math. Appl., Vol $75$, $(2018)$, $3663$-$3684,$ $22$ pages.

    \bibitem{en5}
     E. Ngondiep. "An efficient three-level explicit time-split method for solving $2$D heat conduction equations", submitted.

    \bibitem{en}
    E. Ngondiep. "Long time unconditional stability of a two-level hybrid method for nonstationary incompressible Navier-Stokes equations", J. Comput. Appl. Math. $345(2019)$, $501$-$514$, $14$ pages.

    \bibitem{en3}
    E. Ngondiep. "A time-split MacCormack method for two-dimensional reaction-diffusion equations", submitted.

    \bibitem{en4}
    E. Ngondiep, R. Alqahtani and J. C. Ntonga. "Stability Analysis and Convergence Rate of MacCormack Scheme for Complete Shallow Water Equations with Source Terms", submitted.

    \bibitem{21ynh}
    M. A. Olshanskii. "Two-level method and some a priori estimes in unsteady Navier-Stokes calculations", J. Comp. Appl. Math., $104$ $(1999)$ $173$-$191.$

    \bibitem{rr}
    R. Rannacher. "Finite Element Methods for the Incompressible Navier-Stokes Equations", Institute of Applied Mathematics University of Heidelberg, INF $293/294,$ D-$69120$ Heidelberg, Germany, rannacher@iwr.uni-heidelberg.de, URL: http://gaia.iwr.uni-heidelberg.de $(1999).$

    \bibitem{95rr}
    S. Turek "On discrete projection methods for the incompressible Navier-Stokes equations: An algorithmic approach", Comput. Methods Appl. Mech. Engrg., $143,$ $ (1997)$ $271$-$288.$

    \bibitem{24ynh}
    J. Xu. "A novel two-grid method for semi-linear elliptic equations", SIAM J. Sci. Comput., $15$ $(1994)$ $231$-$237.$

     \bibitem{38mx}
     J. Xu and A. Zhou. "Local and parallel finite element algorithms based on two-grid discretizations", Math. Comp. $69$ $(2000)$ $881$-$909.$
     \end{thebibliography}
     \end{document}